\title{Framed cohomological Hall algebras and cohomological stable envelopes}
\author{\Large Tommaso Maria Botta$^1$}
\date{%
	$^1$Departement Mathematik, ETH Zürich,
	8092 Zürich, Switzerland\\%
	E-mail: tommaso.botta@math.ethz.ch%
}
\begin{document}
\maketitle

%%%%%%%%%%%%%%%%%%%%%%%%%%%%
%%%%%%%%%%%%%%%%%%%%%%%%%%%%

\begin{abstract}
    There are multiple conjectures relating the cohomological Hall algebras (CoHAs) of certain substacks of the moduli stack of representations of a quiver $Q$ to the Yangian $Y^{Q}_{MO}$ by Maulik-Okounkov, whose construction is based on the notion of stable envelopes of Nakajima varieties. In this article, we introduce the cohomological Hall algebra of the moduli stack of framed representations of a quiver $Q$ (framed CoHA) and we show that the equivariant cohomology of the disjoint union of the Nakajima varieties $\naka_Q(\vi,\w)$ for all dimension vectors $\vi$ and framing vectors $\w$ has a canonical structure of subalgebra of the framed CoHA. Restricted to this subalgebra, the algebra multiplication is identified with the stable envelope map. As a corollary, we deduce an explicit inductive formula to compute stable envelopes in terms of tautological classes. \\[2ex] 
    \textbf{Keywords:} Stable envelopes, Cohomological Hall algebras, Nakajima quiver varieties
\end{abstract}

%%%%%%%%%%%%%%%%%%%%%%%%%%%%
%%%%%%%%%%%%%%%%%%%%%%%%%%%%

\tableofcontents

%%%%%%%%%%%%%%%%%%%%%%%%%%%%
%%%%%%%%%%%%%%%%%%%%%%%%%%%%

\section{Introduction}

\subsection{Overview}

The purpose of this article is to establish a connection between the theory of stable envelopes and the world of cohomological Hall algebras. 

The theory of stable envelopes, developed by Okounkov and its coauthors in the last decade \cite{maulik2012quantum, okounkov2017enumerative, aganagic2016elliptic, okounkov2020inductiveI, okounkov2020inductiveII}, is the geometric answer to the problem of constructing a quantum group associated with an arbitrary quiver and of studying the monodromy of the associated quantum differential and difference equations. 
%In brief, a stable envelopes is a map between the cohomology of the fixed locus $X^A$ of the torus action on a variety $X$ and the cohomology of the variety $X$ itself. Such a map is characterized by a list of axioms that uniquely determine it. 
%The development of the theory stable envelopes for different cohomolgy theories, namely singular cohomology, K-theory and elliptic cohomology, is a fundamental point, because it reflects the development of the theory of quantum groups on three levels: rational, trigonometric, and elliptic.
In a nutshell, the stable envelopes of a sufficiently nice variety $X$ equipped with a torus action are certain axiomatically-defined correspondences in cohomology that encode the
equivariant geometry of $X$. Stable envelopes can be defined for many different varieties $X$. Still, the case of Nakajima varieties is particularly relevant. Indeed, not only are the latter one of the richest sources of symplectic resolutions, for which stable envelopes enjoy nicer properties, but they also bridge the gap between the abstract theory of stable envelopes and the Lie-theoretic realm of quantum groups. This follows from the fact that a choice of Nakajima variety is, first and foremost, a choice of a quiver, and its associated stable envelopes give rise to R-matrices for the corresponding quantum group.

The second fundamental ingredient for this article is the theory of cohomological Hall algebras (CoHAs). In the literature, many different constructions go under the name of CoHAs. Their common feature is being certain associative algebras whose underlying vector space encodes irreducible representations of a quiver and whose multiplication encodes extensions of representations. The adjective ``cohomological'' distinguishes them from the previous notion of Hall algebras \cite{Ringel1990}, highlighting that their construction depends on the choice of a generalized cohomology theory. Schiffmann and Vasserot introduced the first example of CoHA for the Jordan quiver \cite{schiffmann2012cherednik}. Since then, the theory of CoHAs has rapidly developed and proved influential in many areas of mathematics. There are two main approaches to COHAs. The first one, initiated by Kontsevich and Soibelman \cite{kontsevich2011cohomological} and often referred to as $3d$ CoHAs, is deeply connected with Donaldson-Thomas theory and BPS invariants. The second one, known as $2d$ CoHAs, is related to the first via dimensional reduction \cite{Davison_critical_CoHA} but can be more directly related to quantum groups \cite{yang2018CoHA, Schiffmann2017generators, Schiffmann2017yangians}.
%Cohomological Hall algebras (CoHAs) associated with a quiver for a generalized cohomology theory were first introduced by Kontsevich and Soibelman \cite{kontsevich2011cohomological} in relation to Donaldson-Thomas theory and the algebra of BPS states. Later, they were related to Nakajima varieties, whose cohomologies are modules over the CoHA \cite{Schiffmann_2013, yang2018CoHA, negut_sala_schlif}.

%Essentially, CoHAs are built from the cohomology of stacks of quiver representations and the algebra multiplication encodes extensions.
In this article, we introduce a variant of Kontsevich-Soibelman's CoHA that involves moduli stacks of \emph{framed} representations, and we argue that this algebra governs the stable envelopes of Nakajima varieties. More precisely, we realize the cohomology of the disjoint union of all possible Nakajima varieties associated with a quiver $Q$ as a subalgebra of the framed CoHA, and we show that on this subalgebra CoHA's multiplication reduces to stable envelopes. As an application, we exploit the shuffle description of the CoHA product to derive an inductive formula producing tautological presentations of the stable envelopes.

The general philosophy of the geometric representation theory of CoHAs is that taking framings corresponds to studying modules over the (unframed) CoHA. As a consequence, one can interpret this article's results as evidence of the fact that the modules themselves, considered altogether, admit an algebra structure, which we call framed CoHA, and that this algebra contains the algebra of stable envelopes of Nakajima varieties.

\subsubsection{Framed cohomological Hall algebras}

The acronym CoHA hints at a large class of algebras constructed from quiver representations and a given cohomology theory, possibly with some extra data such as some potential. The conceptual starting point of this article is one of the most elementary instances of CoHA, namely Kontsevich-Soibelman's CoHA associated with a doubled quiver $(\overline Q, I)$ without potential. Here, $I$ is the set of vertices of the quiver $Q$. As a vector space, the CoHA is simply defined as 
\begin{equation}
	\label{unframed CoHA}
	\CoHA^{\overline Q}=\bigoplus_{\vi\in \N^I} \CoHA^{\overline Q}(\vi)\qquad\qquad  \CoHA^{\overline Q}(\vi)=H([T^*\Rep(\vi)/G_\vi]),
\end{equation}
where $H([T^*\Rep(\vi)/G_{\vi}])$ is the singular cohomology of  $[T^*\Rep(\vi)/G_{\vi}]$, the moduli stack of representations of the path algebra $\Ci \overline Q$ with dimension vector $\vi\in \N^I$. The multiplication $\mathsf{m}=\mathsf{q}_*\circ \mathsf{p}^*$ is obtained using the stack $Z$ parametrizing pairs of representations $x\subset y$ of dimension $\vi_1$ and $\vi_1+\vi_2$ and the natural maps 
\[
\begin{tikzcd}
	\left[T^*\Rep(\vi_1)/G_{\vi_1}\right]\times \left[T^*\Rep(\vi_2)/G_{\vi_2}\right] & Z\arrow[l, swap, "\mathsf{p}"]\arrow[r, "\mathsf{q}"] & \left[T^*\Rep(\vi_1+\vi_2)/G_{\vi_1+\vi_2}\right].
\end{tikzcd}
\]
With this construction in mind, we introduce a framed CoHA
\[
\CoHA^{\overline Q_{\text{fr}}}=\bigoplus_{\vi,\w\in \N^I}\CoHA^{\overline Q_{\text{fr}}}(\vi,\w)\qquad \qquad  \CoHA^{\overline Q_{\text{fr}}}(\vi,\w)=H_{T_\w}([T^*\Rep(\vi,\w)/G_\vi])
\]
by replacing representations $T^*\Rep(\vi)$ with framed representations $T^*\Rep(\vi,\w)$ and ordinary cohomology with equivariant cohomology with respect to the residual action of the torus $T_\w$ acting on the framing and rescaling the cotangent directions by some additional parameter $\hbar$. The multiplication is defined with a correspondence analogous to the previous one, and hence consists of a collection of maps 
\begin{equation}
	\label{CoHA multiplication maps intro}
	\CoHAQ(\vi_1,\w_1)\otimes_\Bbbk \CoHAQ(\vi_2,\w_2)\to \CoHAQ(\vi_1+\vi_2,\w_1+\w_2)
\end{equation}
In this way, the unframed CoHA $\CoHA^{\overline Q}$ can be naturally identified with the specialization for ${\hbar}=0$ of the subalgebra of $\CoHA^{\overline Q_{\text{fr}}}$ with trivial framing $\w=0$.

\subsubsection{Stable envelopes of Nakajima varieties}

An important class of symplectic varieties associated with framed representations of a quiver is given by Nakajima quiver varieties \cite{Nakajimaquiver, Nakajimainstantons, Nakajima_2001, ginzburg2009lectures}. These are defined as the GIT-symplectic reduction of the cotangent bundle $T^*\Rep(\vi,\w)$ with respect to the Hamiltonian action of $G_\vi$. This means that if we denote the moment map by
\[
\mu_{\vi,\w}: T^*\Rep(\vi,\w)\to \g^*_\vi,
\]
then we can describe a Nakajima variety as the quotient 
\[
\naka(\vi,\w)=\mu_{\vi,\w}^{-1}(0)^{ss}/G_{\vi}.
\]
By construction, $\naka(\vi,\w)$ comes with a torus $T_\w$ acting on the framing and rescaling the symplectic form with weight ${\hbar^{-1}}$.
Stable envelopes of Nakajima quiver varieties are certain axiomatically defined maps 
\[
\text{Stab}_{\mathfrak{C}}: H_{T_\w}(\naka(\vi,\w)^A)\to H_{T_\w}(\naka(\vi,\w))
\]
depending on the choice of a subtorus $A\subset T_\w$ fixing the symplectic form of $\naka(\vi,\w)$ and on  the extra piece of data $\mathfrak{C}$, called chamber. Remarkably, each fixed component of $\naka(\vi,\w)^A$ is itself a product of Nakajima varieties and, for an appropriate choice of $A$, it takes the form 
\[
\naka(\vi_1,\w_1)\times \naka(\vi_2,\w_2)
\]
with $\vi_1+\vi_2=\vi$ and $\w_1+\w_2=\w$. In this particular case, the stable envelope becomes a collection of maps 
\begin{equation}
	\label{stable envelope map intro}
	\text{Stab}_{\mathfrak{C}}:H_{T_{\w_1}}(\naka(\vi_1,\w_1))\otimes_\Bbbk H_{T_{\w_2}}(\naka(\vi_2,\w_2))\to H_{T_{\w_1+\w_2}}(\naka(\vi_1+\vi_2,\w_1+\w_2))
\end{equation}

\subsubsection{R-matrices and Yangians}

Stable envelopes of Nakajima varieties give rise to R-matrices \cite[Chapter 4]{maulik2012quantum}. We now briefly review their construction.
Taking direct sums of \eqref{stable envelope map intro} with respect to $\vi_1$ and $\vi_2$ and $\vi$, one gets a map (which we denote in the same way)
\begin{equation}
	\text{Stab}_{\mathfrak{C}}:H_{T_{\w_1}}(\naka(\w_1))\otimes_\Bbbk H_{T_{\w_2}}(\naka(\w_2))\to H_{T_{\w_1+\w_2}}(\naka(\w_1+\w_2))
\end{equation}
where $\naka(\w_i)=\bigsqcup_{\vi_i} \naka(\vi_i,\w_i)$. Given a pair of different chambers $\mathfrak{C_1}$ and $\mathfrak{C_2}$, the composition
\[
R_{\mathfrak{C_1}, \mathfrak{C_2}}:=\text{Stab}_{\mathfrak{C_1}}^{-1}\circ \text{Stab}_{\mathfrak{C_2}}\in \End(H_{T_{\w_1}}(\naka(\w_1))_{loc}\otimes_\Bbbk H_{T_{\w_2}}(\naka(\w_2))_{loc})
\]
is called geometric R-matrix. Here, the subscript \emph{loc} stands for localization with respect to the equivariant parameters coming from the framing. In other words, we set 
\[
H_{T_{\w}}(-)_{loc}=H_{T_{\w}}(-)\otimes_{\Bbbk}\text{Frac}(H_{A_{\w}}(\pt)).
\]
Localization is required to define the inverse $ \text{Stab}_{\mathfrak{C_1}}^{-1}$ of a stable envelope.

A foundational result of Maulik-Okounkov's theory is that these R-matrices satisfy the Yang-Baxter equation with spectral parameter. The associated quantum group, defined via RTT relations, is the Maulik-Okounkov's Yangian $Y^Q_{MO}$ associated to the quiver $Q$ \cite[Chapter 5]{maulik2012quantum}. 
The subalgebra of $Y^Q_{MO}$ spanned by the matrix elements of the classical r-matrix $r$, defined as the equivariant residue of
\[
R_{\mathfrak{C_1}, \mathfrak{C_2}}(a)=1+ \frac{\hbar}{a} r+ O(a^{-2})
\]
is the Maulik-Okounkov's Lie algebra $\g_Q$.

\subsubsection{Framed preprojective semistable CoHA}

The apparent similarity with \eqref{CoHA multiplication maps intro} suggests tentatively defining an algebra structure on
\[
\HhQ=\bigoplus_{\vi,\w\in \N^I}\HhQ(\vi,\w) \qquad\qquad \HhQ(\vi,\w):=H_{T_\w}(\naka(\vi,\w))
\]
via the stable envelopes maps \eqref{stable envelope map intro}. Indeed, this construction defines a graded associative algebra structure on $\HhQ$, which we call the framed semistable preprojective cohomological Hall algebra of $Q$. The choice of the name can be partially understood by observing that the vector space $\HhQ$ consists of cohomologies of Nakajima varieties, which are exactly moduli spaces of semistable representations of the preprojective path algebra $\Ci \overline Q_{\text{fr}}/ J_\mu$ of the doubled framed quiver $\overline Q_{\text{fr}}$, but of course, more is needed to justify this definition. Namely, one must show that the multiplication defined by stable envelopes has something to do with CoHA's multiplication. This is the main goal of the article.
To accomplish this, we exploit Aganagic and Okounkov's non-abelian stable envelope \cite{aganagic2017quasimap}, a map of $ H_{T_\w}(\pt)$-modules
\begin{equation}
    \label{nonabelian stable envelope}
    \bm\psi: H_{T_\w}(\naka(\vi,\w))\to H_{T_\w}([T^*\Rep(\vi,\w)/G_\vi])
\end{equation}
that solves, in analogy with the ordinary stable envelopes, an interpolation problem for cohomology classes \cite{okounkov2020inductiveII}. In a nutshell, the assignment $\alpha\mapsto \bm\psi(\alpha)$ can be thought of as a distinguished extension of the cohomology class $\alpha$ from $\naka(\vi,\w)$ to the whole stack of representations $[T^*\Rep(\vi,\w)/ G_{\vi}]$. The extension $\bm\psi(\alpha)$ is characterized by a precise support condition, namely being supported on $[\mu_{\vi,\w}^{-1}(0)/G_{\vi}]\subset [T^*\Rep(\vi,\w)/ G_{\vi}]$. As a result, $\bm\psi$ is, up to a multiplicative factor, a section of the tautological pullback 
\[
\bm{j}^*: H_{T_\w}([T^*\Rep(\vi,\w)/ G_{\vi}])\twoheadrightarrow H_{T_\w}(\naka(\vi,\w))
\]
associated with the inclusion
\[
\bm{j}:\naka(\vi,\w)= \mu^{-1}_{\vi,\w}(0)^{ss}/G_{\vi}\hookrightarrow [T^*\Rep(\vi,\w)/ G_{\vi}].
\]

With this background, we can finally compare the algebra structures on $\HhQ$ and $\CoHAQ$. Firstly, consider the map $\bm\psi: \HhQ\to \CoHAQ$ obtained by taking direct sums of \eqref{nonabelian stable envelope}. The main result of the article is the following:
\begin{thm*}[\ref{main theorem}]
	The map $\bm\psi: \HhQ\to \CoHAQ_{\tau}$ is an injective morphism of graded algebras. 
\end{thm*}
The subscript $\tau$ in the notation refers to the fact that the algebra structure on $\CoHAQ$ is appropriately twisted, although in a simple and geometrically meaningful way. In words, the theorem above says that the map $\bm\psi$ realizes the cohomology of Nakajima varieties as a subalgebra of the framed CoHA $\CoHAQ_{\tau}$, and on this subalgebra, CoHA's multiplication is identified with taking stable envelopes. 

\subsubsection{}

An an interesting application of the previous theorem, we produce an explicit inductive formula for the stable envelopes of Nakajima varieties. The map $\bm\psi$ allows giving a tautological presentation of the stable envelopes by setting
\[
\text{Stab}_{\mathfrak{C}}^{\bm\psi}:=\frac{\bm{\psi}\circ \text{Stab}}{e({\hbar}\g_\vi)}.
\]
This map has as target a localization of the tautological ring $H_{T_\w}([T^*\Rep(\vi,\w)/G_\vi])$ and it restricts to $\text{Stab}_{\mathfrak{C}}$ on $\naka(\vi,\w)\subset [T^*\Rep(\vi,\w)/G_\vi]$.  

Applying Theorem $\ref{main theorem}$, we show that for every fixed component $F\subset \naka(\vi,\w)^A$ and chamber $\mathfrak{C}$, there are $k=\text{dim}(A)$ decompositions $\vi=\vi_1+\vi_2$, $\w=\w_1+\w_2$, fixed components $F_1\subset \naka(\vi_1,\w_1)^{A_1}$, $F_2\subset \naka(\vi_2,\w_2)^{A_2}$ and chambers $\mathfrak{C}^1$, $\mathfrak{C}^2$ such that $A=A_1\times A_2$ and 
\[
\text{Stab}^{\bm\psi}_{\mathfrak{C}}(F)=\Shuffle\left( \frac{e(T^*\Rep(\vi,\w)[-1]) \text{Stab}^{\bm\psi}_{\mathfrak{C}^1}(F_1) \text{Stab}^{\bm\psi}_{\mathfrak{C}^2}(F _2)}{e(\g_{\vi}[-1]) e({\hbar}\g_{\vi}[-1])} \right).
\]
All the classes in the formula have an explicit tautological presentation.
By induction on the dimension of $A$, this formula gives an explicit presentation of the stable envelopes of Nakajima varieties for the action of a torus $A\subset A_\w$. This formula can be seen as the cohomological limit of the main result of \cite{botta2021shuffle}, where an analogous formula is presented in the elliptic setting.

\subsection{Further directions}

This article can be seen as the starting point of a larger project aimed at establishing a connection between the geometric theory of quantum groups initiated with the introduction of stable envelopes and cohomological Hall algebras. We conclude this introduction with a brief discussion of the prospective directions of development of the author's current work.

\subsubsection{K-theory, elliptic cohomology and qKZB equations}
First of all, the results of the present paper will be extended to K-theory and elliptic cohomology in the companion paper \cite{botta2022CoHA2}. While the results in K-theory are very close to the cohomological ones and can be essentially guessed from this article, the elliptic setting requires a more sophisticated approach. This is partly due to features intrinsic to elliptic cohomology and partly because the elliptic stable envelopes depend on extra parameters called dynamical (or Kähler) parameters. In view of this, the algebras of this article will be replaced by certain algebra objects in an appropriate monoidal category, whose tensor product involves some shifts of the dynamical parameters. Because of that, this construction can be seen as a dynamical version of the sheafified elliptic cohomological Hall algebra of Yang and Zhao \cite{yang2017ell}.

An important application of the CoHA's structure of stable envelopes is in the world of difference equations. Integral solutions for the quantum Knizhnik–Zamolodchikov equations (qKZ) associated with arbitrary quivers can be expressed as 
\[
\int_{\gamma\subset G_\vi}  \text{Stab}^K \Gamma_q,
\]
where $\Gamma_q$ is a product of $q$-gamma functions and $\text{Stab}^K $ is a K-theoretic stable envelope. The proof of this result is due to Okounkov and his coauthors \cite{okounkovsmirnov2016Kthy, okounkov2017enumerative, aganagic2017quasimap}, and is based on the equivalence of certain enumerative counts in K-theory. 
As these methods cannot be easily transferred to elliptic cohomology, the problem of finding solutions for the quantum Knizhnik–Zamolodchikov-Bernard equations (qKZB), which are the elliptic incarnation of the qKZ equations, is still open for general quivers.

In a joint work of the author with Giovanni Felder and Keyu Wang, the CoHA's algebras structure of the elliptic stable envelopes will be exploited to produce such solutions. This project widely generalizes the results of Felder, Tarasov, and Varchenko \cite{felder1999resonance, felder1996solutions}, who had already obtained solutions for type A quivers. 

\subsubsection{Cohomological Hall algebras and stable envelopes of bow varieties}

The elliptic stable envelope of a Nakajima variety $X$ can be interpreted as the transformation matrix that relates the vertex functions of $X$, which are certain enumerative invariants that provide q-holonomic modules for a wide class of difference equations such as the q-KZ equations \cite{aganagic2016elliptic}, with the vertex functions of its 3d mirror dual $X^!$. Consequently, stable envelopes can be considered an adequate topological invariant to test 3d mirror symmetry. Some positive results have already been obtained in \cite{Rimanyi_2019full, Rimanyi2019grass}, but only in the fortunate case when the 3d-dual of the Nakajima variety $X$ is itself a Nakajima variety. Indeed, the class of Nakajima varieties is not closed under mirror symmetry, so to work in full generality, one needs to consider a larger class, namely bow varieties, which have been introduced in the context of enumerative geometry in \cite{rimanyi2020bow}. In joint work with Rich\'ard Rim\'anyi \cite{botta_rimanyiCoHA}, it will be shown that all the main ideas of this paper, namely stable envelopes, cohomological Hall algebras, and their interplay can be extended to bow varieties. Moreover, these results will be exploited to prove fusion rules for R-matrices and 3d mirror symmetry of elliptic stable envelopes \cite{botta_rimanyiMirror}. 

\subsubsection{Preprojective CoHA and Yangians}

Going back to the cohomological level, another version of CoHA can be related to those discussed in this article. Namely,  one can consider the preprojective version of \eqref{unframed CoHA} by setting \[
\pCoHA^{\overline Q}=\bigoplus_{\vi\in \N^I} \pCoHA^{\overline Q}(\vi)\qquad\qquad  \pCoHA^{\overline Q}(\vi)=H^\text{{BM}}([\mu_{\vi}^{-1}(0)/G_\vi]),
\]
where $H^\text{{BM}}$ stands for Borel-Moore homology.
As shown in \cite{yang2018CoHA}, $\pCoHA^{\overline Q}$ acts on cohomology of Nakajima varieties by maps
\[
\pCoHA^{\overline Q}(\vi_1)\otimes_\Bbbk \HhQ(\vi_2,\w)\to \HhQ(\vi_1+\vi_2,\w).
\]

Davison in \cite{Davison_critical_CoHA} and Schiffmann and Vasserot in \cite{Schiffmann2017generators} conjectured in that $\pCoHA^{\overline Q}$ is isomorphic to $Y_{MO}^{Q,+}$, the positive half of the Maulik-Okounkov Yangian $Y_{MO}^{Q}$. This was first shown for the 1-loop quiver in \cite{schiffmann2012cherednik, maulik2012quantum}.
In parallel, Yang and Zhao \cite{yang2018CoHA, yang2016affine, yang2017ell} showed that an appropriate subalgebra of $\pCoHA^{\overline Q}$ is isomorphic to the positive half of Drinfel'd Yangians of type ADE. 

More recently, Schiffmann and Vasserot \cite{Schiffmann2017generators, Schiffmann2017yangians} proved half the conjecture by building an injective map from $\pCoHA^{\overline Q}\otimes_\Bbbk \text{Frac}(\Bbbk)$ to $Y_{MO}^{Q,+} \otimes \text{Frac}(\Bbbk)$, which intertwines the actions of these algebras on the cohomology of Nakajima varieties. The key technical step in their work consisted in identifying certain generators of $\pCoHA^{\overline Q}\otimes_\Bbbk \text{Frac}(\Bbbk)$, expressed in terms of Hecke correspondences acting on the cohomology of Nakajima varieties, with the matrix coefficients of the classical r-matrix of $Y_{MO}^{Q,+}$, which in turn is generated by stable envelopes. In other words, Schliffman and Vasserot identify the appropriate pull-push diagrams of certain matrix coefficients of the equivariant residue of the stable envelope.

The main result of the present paper goes in a similar but independent direction by showing that the whole stable envelope comes from the pull-push diagram defining the multiplication map \eqref{CoHA multiplication maps intro} of the framed CoHA $\CoHAQ_{\tau}$.

Since $\pCoHA^{\overline Q}$ admits a canonical algebra morphism to $\CoHA^{\overline Q}\subset\CoHA^{\overline Q_{\text{fr}}}$, we expect Schliffman-Vasserot's result to be compatible with ours, and hope that a positive answer to this prediction would allow shedding more light on the connection between the preprojective CoHA and Maulik-Okounkov's Yangians $Y_{MO}^{Q,+}$.

\subsubsection{Elliptic quantum groups}

Of course, preprojective CoHAs can be also constructed in K-theory and elliptic cohomology \cite{padurariu_KHA, yang2017ell}. On the other hand, a definition of the elliptic quantum group of an arbitrary quiver is missing in the literature. As a consequence, an identification of the preprojective CoHA with the positive half of quantum groups in cohomology and K-theory could be used to define the elliptic quantum group itself as a Drinfel'd double of the elliptic preprojective CoHA.

\subsection{Acknowledgments} 

I want to thank my advisor Giovanni Felder for his guidance and constant attention throughout the development of this project. I also thank Richard Rimanyi for drawing my attention to CoHAs and Stefano D'Alesio for useful conversations.

This work was supported as a part of NCCR SwissMAP, a National
Centre of Competence in Research, funded by the Swiss
National Science Foundation (grant number 205607)
and grant $200021\_196892$ of the Swiss National
Science Foundation.

\subsection{
	Declarations}
\subsubsection{Conflict of interest}
The author states that there is no conflict of interest.

\subsubsection{Data Availability}
Data sharing is not applicable to this article as no datasets were generated or analysed during the current study.

%%%%%%%%%%%%%%%%%%%%%%%%%%%%%
%%%%%%%%%%%%%%%%%%%%%%%%%%%%%

\section{Set-up}

\subsection{Cohomology functors}

\subsubsection{}

Let $G$ be an algebraic group. The cohomology functor that we consider in this article is equivariant cohomology
\[
H_G(-): G\text{-Spaces}\to \text{Alg}_{H_G(\pt)},
\]
taken with complex coefficients. 
Given a $G$-space $X$, its $G$-equivariant cohomology $H_G(X)$ is defined as the ordinary cohomology of the space 
\[
X_G= (X\times EG)/G,
\]
where $EG\to BG$ is the classifying space of $G$. By construction, it is an $H_G(\pt)=H(BG)$-algebra.

\subsubsection{}

In this article, we only consider the case when $G$ is either a torus $T$, a parabolic subgroup $P$ of $\GL(n)$, or $\GL(n)$ itself. In these cases, $EG$ and $BG$ admit explicit representatives. The infinite Grassmannian 
\[
\Gr{n}{\infty }=\lim_{k\to\infty } \Gr{k}{\Ci^n}
\]
is a model for the classifying space of $B\GL(v)$ with universal bundle given by its tautological bundle of rank $n$. Similarly, a model for the classifying space of a $n$-dimensional torus $T\cong (\Ci^\times)^n$ is $(\mathbb{P}^\infty)^n$, with $\mathbb{P}^\infty=\Gr{1}{\infty}$, and the universal bundle $ET$ is the direct sum of the tautological line bundles living on each copy of $\mathbb{P}^\infty$.
As a consequence, we have 
\[
H_{T}(\pt)= \Ci[\ti]\qquad H_{\GL(n)}(\pt)=\Ci[\ti]^W,
\]
where $\mathfrak{t}$ is the Lie algebra of $T$ and $W$ is the Weyl group of $\GL(n)$.
Let now $P$ be a parabolic subgroup of $\GL(n)$ containing the Levi subgroup $L\cong \prod_i GL(n_i)$. Its classifying space fits in the Cartesian squares
\[
\begin{tikzcd}
 ET\arrow[r]\arrow[d] & EP\arrow[r]\arrow[d] & E\GL(n)\arrow[d]\\
 BL=\prod_i \Gr{n_i}{\infty }\arrow[r] & BP\arrow[r] & B\GL(n)=\Gr{n}{\infty }
\end{tikzcd}
\]
Additionally, notice that $BP$ is homotopy equivalent to $BL$ and a fibration over $BGL(n)$ by partial flags $\GL(n)/P$.

\subsubsection{}

Beyond the standard pullback in cohomology, there are also pushforward maps 
\[
f_*: H_G(X)\to H_G(Y)
\]
associated with proper equivariant morphisms between smooth varieties $f: X\to Y$. Their construction is standard and is obtained through Poincar\'e duality in Borel-Moore homology.

\subsubsection{}

In the article, we will also consider the cohomology of quotients stacks $[X/G]$, which is the algebraic counterpart of equivariant cohomology. As discussed in \cite{behrend}, the cohomology of $[X/G]$ coincides with the equivariant cohomology of the prequotient:
\[
H([X/G])=H_G(X).
\]
More generally, if we are given an action of $G\times H$ on $X$, we can define the $H$-equivariant cohomology of $[X/G]$ as
\[
H_{H}([X/G])=H([X/(G\times H)]).
\]
%%%%%%%%%%%%%%%%%%%%%%%%%%%%%
%%%%%%%%%%%%%%%%%%%%%%%%%%%%%

\subsection{Stacks of quiver representations}
\subsubsection{}
Let $(Q, I)$ be a quiver, i.e. an oriented graph, with finite vertex set $I$, where edge-loops and multiple edges are allowed. The quiver data is encoded in the adjacency matrix $Q=\lbrace Q_{ij} \rbrace$,
where
\[
Q_{ij}=\lbrace\text{number of edges from $i$ to $j$}\rbrace.
\]
By abuse of notation, we identify the set of oriented edges with the matrix $Q$.
There are two natural ``head" and ``tail" maps $h,t:Q\to I$ that associate with an oriented edge $e\in Q$ its head and tail vertices $h(e)$ and $t(e)$ respectively.

A representation of a quiver$(Q,I)$ is an assignment of a vector space $V_i$ to every vertex $i\in I$ and a linear map $x_{h(e),t(e)}\in \Hom(V_{t(e)},V_{h(e)})$ to every edge $e\in Q$. The space of representations $\Rep(\vi)$ is
\begin{equation*}
\label{lin repn}
\Rep(\vi)=\bigoplus_{e\in Q}\Hom(V_{(t(e))}, V_{(h(e))}).
\end{equation*}
where $V_i=\Ci^{\vi_i}$.

\subsubsection{}

Given a quiver $(Q, I)$, we also introduce the space of framed representations
\[
\Rep(\vi,\w)\cong \Rep(\vi)\oplus \bigoplus_{i\in I}\Hom(V_i, W_i)
\]
with dimension
\[
\vi_i=\dim(V_i)\qquad \w_i=\dim(W_i), \qquad \vi,\w\in \N^I.
\]
Equivalently, this can be seen as the space of representations of the framed quiver $( Q_{\text{fr}}, I_{\text{fr}})$, which is the quiver whose set of vertices $ I_{\text{fr}}$ consists of two copies of $I$ and whose adjacency matrix is 
\begin{equation*}
\label{framed dubled quiver}
Q_\text{fr}=\begin{pmatrix}
Q & 0\\
1 & 0
\end{pmatrix}.
\end{equation*}
Also notice that the cotangent bundle $T^*\Rep(\vi,\w)$ can be identified, via trace pairing, with the representations of the framed doubled quiver $(\overline{Q_\text{fr}}, \overline I_{\text{fr}})$, defined by $\overline I_\text{fr}=I_\text{fr}$ and $\overline{Q}_\text{fr}=Q_\text{fr}+Q_\text{fr}^T$.

Finally, we remark that $\Rep(\vi,0)$ is naturally identified with $\Rep(\vi)$ and, similarly, $T^*\Rep(\vi,0)$ is identified with $T^*\Rep(\vi)$, the space of representations of the (unframed) doubled quiver $(\overline{Q},\overline{I})=(Q+Q^T, I)$.

\subsubsection{}
The groups
\[
G_{\vi}:=\prod_{i\in I}\GL(\vi_i)\qquad G_{\w}:=\prod_{i\in I}\GL(\w_i)
\]
act naturally on 
on $T^*\Rep(\vi,\w)$ by change of basis. There is also an action of $\Ci_{\hbar}^\times$ rescaling the cotangent directions of $T^*\Rep(\vi,\w)$, and we denote the corresponding weight by ${\hbar}$.
Overall, we get an action of $G_\vi\times G_\w\times \Ci^\times_{\hbar}$ on $T^*\Rep(\vi,\w)$.

Notice that the vector space $T^*\Rep(\vi,\w)$ admits a canonical symplectic structure, which is preserved by the action of $G_\vi\times T_\w$ and is shifted by $\Ci^\times_{\hbar}$ with weight ${\hbar^{-1}}$.
\begin{defn}
The moduli space $\RG(\vi,\w)$ is the quotient stack $[T^*\Rep(\vi,\w)/G_\vi]$.
\end{defn}
Since we will never consider the stack $[\Rep(\vi,\w)/G_\vi]$, no confusion should arise from the notation.

Notice that we only take the quotient with respect to $G_\vi$, so $\RG(\vi,\w)$ admits a residual action by $G_\w\times \Ci^\times_{\hbar}$. We will be actually interested in the action of a maximal torus $T_\w\subset G_\w\times \Ci^\times_{\hbar}$, which we factor as $T_\w=A_\w \times  \Ci^\times_{\hbar}$
where $A_\w=\prod_{i\in I} A_{\w_i}$ is a maximal torus in $G_\w$.

%%%%%%%%%%%%%%%%%%%%%%%%%%%%%
%%%%%%%%%%%%%%%%%%%%%%%%%%%%%

\subsection{Nakajima quiver varieties}

\subsubsection{}
By $\naka_{Q,\theta}(\vi,\w)$ we denote the Nakajima variety associated to a quiver $(Q,I)$, dimension vectors $\vi,\w\in \N^I$ and stability condition $\theta\in\Z^{I}\cong \text{Char}(G_\vi)$, cf. \cite{ginzburg2009lectures, Nakajimainstantons, Nakajimaquiver, Nakajima_2001}.
Namely, $\naka_{Q,\theta}(\vi,\w)$ is realized as the GIT symplectic reduction of the space of representations $T^*\Rep(\vi,\w)$ by the Hamiltonian action of $G_{\vi}$:
\[
\naka_{Q,\theta}(\vi,\w):=T^*\Rep(\vi,\w)////^\theta G_{\vi}.
\]
Equivalently, $\naka_{Q,\theta}(\vi,\w)$ is the quotient 
\[
\mu_{\vi,\w}^{-1}(0)^{\theta-ss}/G_\vi,
\]
where 
\[
\mu_{\vi,\w}:T^*\Rep(\vi,\w)\to \g_\vi^*
\] is the moment map of the $G_{\vi}$-action and $\mu_{\vi,\w}^{-1}(0)^{\theta-ss}$ is the open subset of $\theta$-semistable representations in $\mu_{\vi,\w}^{-1}(0)$. Since the action of $T_\w=A_\w\times \Ci^\times_{\hbar}$ commutes with the one of $G_{\vi}$, it descends to an action on $\naka_{\theta}(\vi,\w)$. In particular, the subgroup $\Ci^\times_{\hbar}$ rescales the symplectic form while all the other actions preserve it. 

\subsubsection{}

A striking feature of Nakajima varieties is that the fixed components of the action of a subtorus of $A_\w$ are still quiver varieties \cite[Section 2.4.1]{maulik2012quantum}.
\begin{propn}
\label{propoistion fixed point naka}
Let $\Ci^\times$ be a one-dimensional subtorus of $A_\w$ acting trivially on the subspaces $W^{(1)}_i\subset W_i$, and with weight one on their complements $W^{(2)}_i$. Then
\begin{equation*}
    \naka(\vi,\w)^{\Ci^\times}\cong\bigsqcup_{\vi_1+\vi_2=\vi}\naka(\vi_1,\w_1)\times \naka(\vi_2,\w_2),
\end{equation*}
where $\w_1$ and $\w_2$ are the dimension vectors of $W^{(1)}$ and $W^{(2)}$ and the isomorphism is induced by the diagonal inclusion
\[
T^*\Rep(\vi_1,\w_1)\times T^*\Rep(\vi_2,\w_2)\hookrightarrow T^*\Rep(\vi,\w).
\]
\end{propn}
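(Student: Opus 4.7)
The plan is to analyze fixed points through the cocharacter they force on the gauge group $G_\vi$. A point $[x]\in \naka(\vi,\w)^{\Ci^\times}$ is represented by a semistable $x\in \mu^{-1}_{\vi,\w}(0)^{ss}$ for which there exists a lift, i.e.\ a cocharacter $\sigma:\Ci^\times\to G_\vi$ such that $\sigma(t)\cdot x = t\cdot x$ for all $t$, where on the right hand side $\Ci^\times$ acts via the given subtorus of $A_\w$. Since the stabilizer of a semistable representation with respect to the action of $G_\vi/Z(G_\vi)$ is trivial, such a lift is essentially unique, so the first step is to reduce the problem to classifying pairs $(x,\sigma)$ up to $G_\vi$-conjugation.

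Given such a $\sigma$, I would decompose each $V_i$ into $\sigma$-weight spaces $V_i=\bigoplus_k V_i^{(k)}$. The $\Ci^\times$-equivariance of the data then translates into the combinatorial statement that the quiver maps $x_e$ and their cotangent duals $y_e$ (carrying $\sigma$-weight $0$) preserve the weight decomposition, while the framing maps $I_i\in\Hom(W_i,V_i)$ and $J_i\in\Hom(V_i,W_i)$ must send $W_i^{(1)}\leftrightarrow V_i^{(1)}$ and $W_i^{(2)}\leftrightarrow V_i^{(0)}$, being zero on the remaining weight spaces.

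The key step is then to invoke the Nakajima stability condition to conclude that only the weights $0$ and $1$ can occur. Concretely, the subspace $\tilde V=\bigoplus_{k\neq 0,1}V_i^{(k)}\subset V$ is preserved by all quiver maps and, by the constraints above, lies in the kernel of every $J_i$; by $\theta$-semistability (with the standard Nakajima chamber) such a subrepresentation must be trivial. This forces $V_i=V_i^{(0)}\oplus V_i^{(1)}$, and the framed representation $x$ splits as a direct sum $x=x_1\oplus x_2$ of a framed representation of dimension $(\vi_1,\w_1)$ on $(V^{(1)},W^{(1)})$ and one of dimension $(\vi_2,\w_2)$ on $(V^{(0)},W^{(2)})$, where $\vi_k=\dim V^{(k)}$. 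The moment map vanishes on each summand since the moment map is additive under direct sums and preserves the weight grading.

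The final step is to upgrade this pointwise decomposition to the claimed isomorphism of varieties. One checks that the summands $x_1,x_2$ are themselves $\theta$-semistable: any destabilizing subspace of $x_k$ would pull back to a destabilizing subspace of $x$ through the inclusion $V^{(k)}\hookrightarrow V$. This produces the inverse map $\naka(\vi,\w)^{\Ci^\times}\to \bigsqcup \naka(\vi_1,\w_1)\times \naka(\vi_2,\w_2)$ to the obvious morphism induced by direct sum, and one verifies they are mutually inverse. I expect the main delicate point to be the uniqueness of the cocharacter $\sigma$ (so that the dimensions $\vi_1,\vi_2$ are well-defined invariants of the fixed point, not of the chosen lift); the argument relies on the fact that the ambiguity in $\sigma$ lies in the centralizer of $x$, which for stable points reduces to the scalars, and a scalar shift would move the weights out of $\{0,1\}$ unless it is trivial.
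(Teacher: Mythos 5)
Your proposal is correct and takes the same standard approach the paper references (it cites \cite[Section 2.4.1]{maulik2012quantum} rather than giving its own proof): lift the $\Ci^\times$-action on a fixed point to a cocharacter $\sigma:\Ci^\times\to G_\vi$, decompose $V$ by $\sigma$-weights, read off which weight spaces the quiver and framing maps can connect, and invoke stability to force the weights into $\{0,1\}$. The stability argument and the passage to direct sums are handled correctly.

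Two minor imprecisions are worth flagging. First, for a $\theta$-stable point of a Nakajima variety with $\w\neq 0$ the stabilizer of $x$ in $G_\vi$ is genuinely \emph{trivial}, not the scalar subgroup $Z(G_\vi)$: the framing maps $I,J$ break the scalar symmetry. (The scalar-stabilizer picture is correct for unframed quiver moduli, but not here.) So $\sigma$ is unique outright, and the closing argument ``a scalar shift would move the weights out of $\{0,1\}$'' is unnecessary -- though harmless. Second, you treat only the positive Nakajima chamber ($\ker J$-condition); for the opposite sign one argues dually that $V^{(0)}\oplus V^{(1)}\supset \operatorname{Im}(I)$ is $x,y$-invariant and hence equals $V$. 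Adding a sentence to cover both cases would make the proof complete for a general generic $\theta$.
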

By iteration, one gets a decomposition of the connected components of $\naka(\vi,\w)^{A_\w}$ as a product of Nakajima varieties.

\subsubsection{}
\label{kirwan surjectivity}

A representation $U$ of $G_\vi\times T_\w$ allows defining the vector bundle
\begin{equation}
\label{tautological bundles naka}
    (\mu_{\vi,\w}^{-1}(0)^{\theta-ss}\times U)/G_{\vi}
\end{equation}
over $X$, which, by abuse of notation, we denote in the same way. In particular, this gives tautological bundles $V_i$ and $W_i$ for all $i\in I$.

Kirwan surjectivity, proved by McGerty and Nevins in \cite{McGerty_2017} and by Schiffmann and Vasseort in an appendix of \cite{Schiffmann2017generators}, states that the restriction map 
\[
H_{T_\w\times G_\vi}(T^*\Rep(\vi,\w))\to H_{T_\w\times G_\vi}(\mu_{\vi,\w}^{-1}(0)^{\theta-ss})=H_{T_\w}(\naka(\vi,\w))
\]
is surjective, and, as a consequence, the cohomology ring $H_{T_\w}(\naka(\vi,\w))$ is generated as a $H_{\Ci_\hbar^\times }(\pt)$-algebra by the Chern roots of the tautological bundles $V_i$ and $W_i$.

%In particular, this means that the structure sheaf $\mathcal{O}_{E_T(\naka_Q(\vi,\w))}$ is generated as an $\mathcal{O}_{E_T}$-algebra by the elliptic Chern roots $t^{i}_j$, $i\in I$, $j={1,\dots, \vi_i}$, which are the coordinate functions on $E^{\vi_i}$.

%%%%%%%%%%%%%%%%%%%%%%%%%%%%%
%%%%%%%%%%%%%%%%%%%%%%%%%%%%%

\section{Framed CoHA}

\subsection{The basic construction}

\subsubsection{}

Fix a quiver $(Q,I)$ and consider once again a pair of decompositions $\vi=\vi_1+\vi_2$ and $\w=\w_1+\w_2$ and the inclusion 
\begin{equation}
    \label{diagonal inclusion representations}
    T^*\Rep(\vi_1,\w_1)\times T^*\Rep(\vi_2,\w_2)\hookrightarrow T^*\Rep(\vi,\w).
\end{equation}
For $j=1,2$, set 
\[
V^{(j)}=\bigoplus_{i\in I} V_i^{(j)}\qquad W^{(j)}=\bigoplus_{i\in I} W_i^{(j)}
\]
where $\vi_j=(\dim V_i^{(j)})_{i\in I}$ and $\w_j=(\dim W_i^{(j)})_{i\in I}$.
Interpreting the decompositions above as the introduction of an additional grading
\[
V=V^{(1)}[0]\oplus V^{(2)}[1] \qquad W=W^{(1)}[0]\oplus W^{(2)}[1],
\]
we can identify the image of \eqref{diagonal inclusion representations} with the degree zero subspace in the induced grading
\[
T^*\Rep(\vi,\w)=T^*\Rep(\vi,\w)[-1]\oplus T^*\Rep(\vi,\w)[0]\oplus T^*\Rep(\vi,\w)[1].
\]
The space
\[
Z:=T^*\Rep(\vi,\w)[0]\oplus T^*\Rep(\vi,\w)[1]
\]
naturally lives on a Lagrangian correspondence 
\begin{equation}
\label{fundamental correspondence prequotient}
    \begin{tikzcd}
    T^*\Rep(\vi_1,\w_1)\times T^*\Rep(\vi_2,\w_2) &  Z\arrow[l] \arrow[r, hook]  & T^*\Rep(\vi,\w)
    \end{tikzcd}
\end{equation}
where the first map is projection and the second one is inclusion. By Lagrangian, here we mean that $Z$ can be seen as a Lagrangian subspace of the symplectic space
\[
\left(T^*\Rep(\vi_1,\w_1)\times T^*\Rep(\vi_2,\w_2)\right)\times T^*\Rep(\vi,\w)
\]
with symplectic form given by the sum of the standard symplectic form of the first factor and the opposite of the standard symplectic form of the second one. 
\begin{remq}
The previous grading is equivalent to an action of a one-dimensional torus $\Ci^\times$ acting on the various spaces with the weights prescribed by their degrees. From this point of view, the zero-degree subspace 
\[
T^*\Rep(\vi,\w)[0]\cong T^*\Rep(\vi_1,\w_1)\times T^*\Rep(\vi_2,\w_2)
\]
is just the fixed locus of the action of $\Ci^\times$ on $T^*\Rep(\vi,\w)$ and $Z$ is the attracting set of $T^*\Rep(\vi,\w)[0]$.
\end{remq}

\subsubsection{}

It is important to note that
\[
p: Z\to T^*\Rep(\vi_1,\w_1)\times T^*\Rep(\vi_2,\w_2)
\]
is a vector bundle whose fibers $Z_{(x_1,x_2)}$ can be identified with the ext groups
\[
\text{Ext}^1(x_2,x_1).
\]
Here we see the elements $x_1$ and $x_2$ as modules over the path algebra of the framed doubled quiver $(\overline{Q}_\text{fr}, \overline I_{\text{fr}})$ and the Ext-group is taken in this category.

\subsubsection{}

The subspace $Z\subset T^*\Rep(\vi,\w)$ is preserved by the action of
\[
P=\prod_{i\in I} P_i \qquad  P_i=\begin{pmatrix}G_{\vi_1} & 0 \\ \ast & G_{\vi_2}\end{pmatrix}
\]
i.e. the product of the parabolic subgroups preserving $V_i^{(2)}\subset V_i$. Notice in particular that
\[
\text{Lie}(P)=\g_{\vi_1}\oplus \g_{\vi_2}\oplus \g_{\vi}[1].
\]
Passing to quotients, we get a correspondence
\begin{equation}
\label{fundamental correspondence stacks}
    \begin{tikzcd}
    \RG(\vi_1,\w_1)\times\RG(\vi_2,\w_2) &   \left[Z/P\right]\arrow[l, ,swap, "\mathsf{p}"] \arrow[r, "\mathsf{q}"] & \RG(\vi,\w)
    \end{tikzcd}
\end{equation}

Following the general construction of Kontsevich and Soibelman, we will define an algebra via pullback-pushforward along these maps.

\subsubsection{}
\label{Subsubsection tangent and normal classes}
For later reference, we record some formulas for certain K-theory classes of $\RG(\vi,\w)$ and $\naka(\vi,\w)$. Firstly, the tangent class
\[
T\RG(\vi,\w)\in K_{T_\w}(\RG(\vi,\w))= K_{T_\w\times G_\vi}(\pt)
\] 
can be written as 
\[
T^*\Rep(\vi,\w)-\g_\vi.
\]
The inclusion \eqref{diagonal inclusion representations} descends a $T_\w$-equivariant closed immersion of stacks 
\[
\RG(\vi_1,\w_1)\times\RG(\vi_2,\w_2)\hookrightarrow\RG(\vi,\w)
\]
whose normal class
\[
\mathfrak{N}=T\RG(\vi,\w)-T\RG(\vi_1,\w_1)-T\RG(\vi_2,\w_2)\in K_{T_\w}(\RG(\vi_1,\w_1)\times \RG(\vi_2,\w_2))
\]
can be rewritten as $\mathfrak{N}=\mathfrak{N}[1]+\mathfrak{N}[-1]$, where 
\[
\mathfrak{N}[1]=T^*\Rep(\vi,\w)[1] -\g_\vi[1] \qquad\mathfrak{N}[-1]= T^*\Rep(\vi,\w)[-1] -\g_\vi[-1].
\]
Similarly, we have 
\[
T\naka(\vi,\w)=T^*\Rep(\vi,\w)-\g_\vi-{\hbar}\g_\vi\in K_{T_\w}(\naka(\vi,\w))
\]
and the normal class of 
\[
\naka(\vi_1,\w_1)\times\naka(\vi_2,\w_2)\hookrightarrow\naka(\vi,\w)
\]
is $N=N[1]+N[-1]$, where 
\[
N[1]=T^*\Rep(\vi,\w)[1] -\g_\vi[1]-{\hbar}\g_\vi[1] \qquad N[-1]= T^*\Rep(\vi,\w)[-1] -\g_\vi[-1]-{\hbar}\g_\vi[-1].
\]
\subsection{Framed Cohomological Hall Algebras}

\subsubsection{}

From now on, we denote the $\Ci^\times_{\hbar}$-equivariant cohomology of the point by $\Bbbk$. This means that $\Bbbk$ is the polynomial ring $\Ci[{\hbar}]$. Let now $(Q, I)$ be a quiver. We define a $\N^I\times \N^I$-graded $\Bbbk$-module
\[
\CoHAQ=\bigoplus_{\vi,\w\in \N^I} \CoHAQ(\vi,\w)
\]
by setting 
\[
\CoHAQ(\vi,\w):=H_{T_\w}(\RG(\vi,\w)).
\]
As a $\Bbbk$-module, $\CoHAQ(\vi,\w)$ is just the ring
\[
\Bbbk[\liea_\w\times \s_{\vi}]^{W_{G_{\vi}}},
\]
where $\liea_\w$ and $\s_\vi$ are the Lie algebras of $A_\w\subset T_\w$ and of some maximal torus $S_\vi\subset G_\vi$ respectively.
Whenever convenient, we will drop the reference to the quiver by simply writing $\CoHA$ instead of $\CoHAQ$.
Clearly, instead of defining $\CoHA(\vi,\w)$ as the $T_\w$-equivariat cohomology of $\RG(\vi,\w)=[T^*\Rep(\vi,\w)/G_\vi]$, we could have taken the non-equivariant cohomology of the stack $[T^*\Rep(\vi,\w)/(G_\vi\times T_\w)]$. However, we believe that the former is the right perspective, especially when it comes to studying the relation between stable envelopes and CoHAs.

Following Kontsevich and Soibelman \cite{kontsevich2011cohomological}, we define a multiplication map
\[
\mathsf{m}:\CoHA\otimes_{\Bbbk}\CoHA\to \CoHA
\]
that turns $\mathcal{H}$ into a $\N^I\times \N^I$-graded $\Bbbk$-algebra. Each graded component 
\[
\mathsf{m}:\CoHA(\vi_1,\w_1)\otimes_{\Bbbk}\CoHA(\vi_2,\w_2)\to \CoHA(\vi_1+\vi_2,\w_1+\w_2)
\]
is defined by means of correspondence \eqref{fundamental correspondence stacks}. Explicitly, $\mathsf{m}$ is defined as the composition of the following maps:
\begin{enumerate}
    \item the K\"unneth isomorphism
    \[
    H_{T_{\w_1}}^*(\RG(\vi_1,\w_1))\otimes_{\Bbbk}H_{T_{\w_2}}^*(\RG(\vi_2,\w_2))\cong H_{T_\w}^*(\RG(\vi_1,\w_1)\times \RG(\vi_2,\w_2)),
    \]
    \item the pullback
    \[
    \mathsf{p}^*: H_{T_\w}^*(\RG(\vi_1,\w_1)\times \RG(\vi_2,\w_2))\to H_{T_\w}([Z/P]),
    \]
    \item the pushforward
    \[
    \mathsf{q}_*: H_{T_\w}([Z/P])\to H_{T_\w}(\RG(\vi_1+\vi_2,\w_1+\w_2)).
    \]
\end{enumerate}
The proof of \cite[Theorem 1]{kontsevich2011cohomological} extends to our framed setting and gives the following

\begin{thm}
\label{theorem CoHA is an algebra}
The map $\mathsf{m}$ defines an $\N^I\times \N^I$-graded associative unital $\Bbbk$-algebra structure on $\CoHAQ$ with unit element given by $1\in \Bbbk=\CoHAQ(0,0)$.
\end{thm}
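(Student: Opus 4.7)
The plan is to verify three things in turn: that $\mathsf{m}$ is well-defined on the graded pieces as asserted, that $1\in\CoHAQ(0,0)$ is a two-sided unit, and that $\mathsf{m}$ is associative, all by adapting the Kontsevich--Soibelman argument to the framed, equivariant setting. The grading is immediate from the bookkeeping of the decompositions $\vi=\vi_1+\vi_2$ and $\w=\w_1+\w_2$ built into the correspondence \eqref{fundamental correspondence stacks}, and the K\"unneth isomorphism and pullback $\mathsf{p}^*$ are unproblematic. The only nontrivial ingredient in the definition of $\mathsf{m}$ is the pushforward $\mathsf{q}_*$, so first I would check that $\mathsf{q}:\left[Z/P\right]\to\RG(\vi,\w)$ is proper. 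On prequotients this map is $G_\vi\times_P Z\to T^*\Rep(\vi,\w)$, which factors as the closed immersion $G_\vi\times_P Z\hookrightarrow G_\vi\times_P T^*\Rep(\vi,\w)$ followed by the projection with fibre the partial flag variety $G_\vi/P=\prod_i \mathrm{Gr}(\vi^{(1)}_i,\vi_i)$. Since $G_\vi/P$ is projective, $\mathsf{q}$ is proper and $\mathsf{q}_*$ is defined.

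For the unit axiom, when $(\vi_2,\w_2)=(0,0)$ the parabolic $P$ coincides with $G_{\vi_1}$, the subspace $Z$ equals $T^*\Rep(\vi_1,\w_1)$, and both $\mathsf{p}$ and $\mathsf{q}$ reduce to the identity of $\RG(\vi_1,\w_1)$. Under the K\"unneth isomorphism this exactly identifies $\mathsf{m}(-\otimes 1)$ with the identity on $\CoHAQ(\vi_1,\w_1)$, and the argument from the left is symmetric.

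Associativity is the main content, and I would prove it by a triple-flag correspondence. Given decompositions $\vi=\vi_1+\vi_2+\vi_3$ and $\w=\w_1+\w_2+\w_3$, let $P_{123}\subset G_\vi$ be the parabolic stabilising the flag $V^{(1)}\subset V^{(1)}\oplus V^{(2)}\subset V$, and let $Z_{123}\subset T^*\Rep(\vi,\w)$ be the subspace of representations preserving this flag together with the corresponding flag in $W$. This yields a triple correspondence
\begin{equation*}
\begin{tikzcd}
\prod_{j=1}^{3}\RG(\vi_j,\w_j) & \left[Z_{123}/P_{123}\right] \arrow[l,swap,"\mathsf{p}_{123}"] \arrow[r,"\mathsf{q}_{123}"] & \RG(\vi,\w),
\end{tikzcd}
\end{equation*}
with two natural factorisations according to whether one first associates $V^{(1)}\subset V^{(1)}\oplus V^{(2)}$ or $V^{(2)}\subset V^{(2)}\oplus V^{(3)}$. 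The key observation is that each factorisation fits into a genuine Cartesian square of quotient stacks whose maps are either closed immersions or bundles with projective (flag-variety) fibre, so that smooth and proper base change combine to yield
\begin{equation*}
\mathsf{m}\circ(\mathsf{m}\otimes\mathrm{id})=(\mathsf{q}_{123})_{*}\circ(\mathsf{p}_{123})^{*}=\mathsf{m}\circ(\mathrm{id}\otimes\mathsf{m}).
\end{equation*}
The hard part is precisely the verification of this base change for the Cartesian squares of quotient stacks. This is the technical heart of Kontsevich--Soibelman's proof; it transfers to our framed setting without modification since the torus $T_\w$ acts trivially on every flag variety that appears and commutes with all parabolic actions, so the whole argument can be run $T_\w$-equivariantly.
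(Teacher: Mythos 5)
Your proof is correct and follows precisely the route the paper takes: the paper's entire argument for this theorem is the sentence ``The proof of [Theorem 1, Kontsevich--Soibelman] extends to our framed setting,'' and what you have written is a faithful fleshing-out of that extension — properness of $\mathsf{q}$ via the flag-bundle factorisation, the unit check by degenerating $P$ to $G_{\vi_1}$ and $Z$ to $T^*\Rep(\vi_1,\w_1)$, and associativity via the triple correspondence with the parabolic $P_{123}$ and base change on Cartesian squares, all carried out $T_\w$-equivariantly since $T_\w$ commutes with $G_\vi$ and its parabolics. The only (harmless) cosmetic point is that in the equivariant framework one must also track that the K\"unneth tensor is taken over $\Bbbk=H_{\Ci^\times_h}(\pt)$, matching $T_\w\cong T_{\w_1}\times_{\Ci^\times_h}T_{\w_2}$, but this is implicit in your setup and causes no difficulty.
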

\begin{defn}
We call the associative unital $\N^I\times \N^I$-graded algebra $(\CoHAQ,\mathsf{m})$ the framed doubled cohomological Hall algebra associated with the quiver $(Q, I)$.
\end{defn}
Notice that, seen as a $\Bbbk$-module, $\CoHAQ$ does not depend on the edges of $Q$, encoded in the multiplication map $\mathsf{m}$ nonetheless.
\begin{remq}
The (unframed) CoHA associated with the doubled quiver $(\overline Q, \overline I)$ defined by Kontsevich-Soibelman is recovered as the $\w=0$ subalgebra of our framed CoHA $\CoHAQ$. Indeed, the $\Bbbk$-submodule with trivial $\w$-grading is preserved by the multiplication, which reduces to the one defined in \cite[Section 2.2]{kontsevich2011cohomological}. On the other hand, the algebra $\CoHAQ$ should not be confused with Kontsevich-Soiblelman cohomological Hall algebra associated with the framed doubled quiver $(\overline Q_{\text{fr}},\overline I_{\text{fr}})$. Indeed, the graded component of the latter is identified with $H^*_{G_{\vi}\times G_{\w}\times \Ci^\times_{\hbar}}(T^*\Rep(\vi,\w))$ rather than $H^*_{G_{\vi}\times T_{\w}}(T^*\Rep(\vi,\w))$, and also the multiplication is affected by the change of group. 
\end{remq}

\subsubsection{}

To relate $\CoHAQ$ with stable envelopes, we need to twist the multiplication
\[
\mathsf{m}:\CoHA(\vi_1,\w_1)\otimes_\Bbbk\CoHA(\vi_2,\w_2)\to \CoHA(\vi_1+\vi_2,\w_1+\w_2).
\]
This is done as follows: multiplication by the class
\[
e({\hbar}\g_\vi[1])\in H_{T_\w}^*(\RG(\vi_1,\w_1)\times \RG(\vi_2,\w_2))
\]
gives an endomorphism that can be inserted between the maps at points 1. and 2. in the definition of $\mathsf{m}$ to get a twisted multiplication $\mathsf{m}_\tau$. It is easy to see that $\mathsf{m}_\tau$ also defines unital associative $\Bbbk$-algebra structure on the $\Bbbk$-module $\CoHAQ$. To distinguish the twisted algebra structure on $\CoHAQ$ without explicitly referring to the multiplication map $\mathsf{m}_\tau$, we add a subscript to the $\Bbbk$-module itself: $\CoHAQ_\tau$.

\begin{defn}
We call the associative unital $\N^I\times \N^I$-graded $\Bbbk$-algebra $(\CoHAQ_\tau, \mathsf{m}_\tau)$ the twisted framed doubled cohomological Hall algebra associated with the quiver $(Q, I)$.
\end{defn}
\begin{remq}
The introduction of the twist is not new in the literature. Indeed, Yang and Zhao introduced in \cite{yang2018CoHA} the same twist to define a canonical morphism between the CoHA associated with the preprojective path algebra $\Ci \overline Q/J_{\mu}$ and the twisted version of Kontsevich-Soibelman CoHA for the doubled quiver $\overline Q$. Here $J_{\mu}$ is the ideal associated with the moment map of the action of $G_\vi$ on $T^*\Rep(\vi)$. Equivalently, via dimensional reduction \cite[Appendix A]{Davison_critical_CoHA}, the twist in the multiplication map $m_\tau$ can be also detected in the critical CoHA for the tripled quiver with a non-trivial potential \cite{ginzburg2007calabiyau, ren2016cohomological, Yang_2019}.

We also remark that, following Yang-Zhao's construction, one could define a framed version of their preprojective cohomological Hall algebra together with an algebra morphism to $\CoHAQ_\tau$. 
\end{remq}

\subsection{Shuffle description for the product}

The framed CoHA admits a very explicit description under the identifications
\[
\CoHA(\vi,\w)=\Bbbk[\liea_\w\times \s_{\vi}]^{W_{G_{\vi}}}.
\]
An element $f\in \CoHA(\vi,\w)$ is simply a polynomial $f(a,s)$, symmetric in each set of Chern roots associated to the groups $\GL(\vi_i)\subset G_{\vi}$.

Fix decompositions $\vi=\vi_1+\vi_2$ and $\w=\w_1+\w_2$ and view both $\CoHA(\vi,\w)$ and $\CoHA(\vi_1,\w_1)\otimes_{\Bbbk}\CoHA(\vi_2,\w_2)$  as subalgebras of 
\[
\Bbbk[a_\w\times \s_{\vi}]=\Bbbk[\liea_{\w_1}\times \s_{\vi_1}]\otimes_\Bbbk\Bbbk[\liea_{\w_2}\times \s_{\vi_2}]
\]
Let 
\[
\Shuffle: \Bbbk[\liea_{\w_1}\times \s_{\vi_1}]^{W_{G_{\vi_1}}}\otimes_\Bbbk\Bbbk[\liea_{\w_2}\times \s_{\vi_2}]^{W_{G_{\vi_2}}}\to \Bbbk[\liea_\w\times \s_{\vi}]^{W_{G_{\vi}}}
\]
be the symmetrization map by shuffles, and recall the definitions of the classes in Section \ref{Subsubsection tangent and normal classes}.
\begin{thm}
\label{CoHA multiplication formula}
The product $\mathsf{m}(f_1,f_2)$ of two elements $f_1(a_1,s_1)\in \CoHA(\vi_1,\w_1)$ and  $f_2(a_2,s_2)\in \CoHA(\vi_2,\w_2)$ is given by 
\[
\Shuffle\left( \frac{e(T^*\Rep(\vi,\w)[-1]) f_1f_2}{e(\g_{\vi}[-1])}   \right).
\]
Similarly, the twisted product $\mathsf{m}_\tau(f_1,f_2)$ is given by 
\[
\Shuffle\left( \frac{e(T^*\Rep(\vi,\w)[-1]) e({\hbar}\g_{\vi}[1])f_1f_2}{e(\g_{\vi}[-1])} \right)
\]
\end{thm}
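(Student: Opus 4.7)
The plan is to compute $\mathsf{m} = \mathsf{q}_* \circ \mathsf{p}^*$ explicitly at the level of polynomial rings by unwinding the two operations separately. Throughout, I use the identifications $H_{T_\w}(\RG(\vi,\w)) = \Bbbk[\liea_\w \times \s_\vi]^{W_{G_\vi}}$ and analogous descriptions of the middle term $H_{T_\w}([Z/P])$.

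First, I would show that $\mathsf{p}^*$ is the tautological inclusion sending $f_1 \otimes f_2$ to $f_1(a_1,s_1) f_2(a_2,s_2)$, viewed inside the larger, only $W_L$-symmetric polynomial ring. To see this, I would factor $\mathsf{p}$ as the composition of the vector bundle projection $[Z/P] \to [T^*\Rep(\vi,\w)[0]/P]$ (cohomologically an isomorphism, since $Z \to T^*\Rep(\vi,\w)[0]$ is the $P$-equivariant vector bundle whose fiber is $T^*\Rep(\vi,\w)[1]$), the identification $BP \simeq BL$ coming from the deformation retraction of $P$ onto its Levi $L = G_{\vi_1}\times G_{\vi_2}$ (also a cohomological isomorphism), and finally the canonical identification $[T^*\Rep(\vi,\w)[0]/L] = \RG(\vi_1,\w_1) \times \RG(\vi_2,\w_2)$ combined with the Künneth isomorphism. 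None of these steps affects classes, so at the level of polynomials $\mathsf{p}^*$ is simply the canonical inclusion.

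Next, I would factor $\mathsf{q}$ as
\[
[Z/P] \xhookrightarrow{\iota} [T^*\Rep(\vi,\w)/P] \xrightarrow{\pi} [T^*\Rep(\vi,\w)/G_\vi].
\]
The inclusion $\iota$ is closed with $T_\w \times P$-equivariant normal bundle isomorphic to $T^*\Rep(\vi,\w)[-1]$, so the excess intersection / self-intersection formula yields $\iota_*(g) = e(T^*\Rep(\vi,\w)[-1]) \cdot g$. The second map $\pi$ is the partial flag bundle with typical fiber $G_\vi/P$, whose relative tangent representation is $\g_\vi/\mathfrak{p} \cong \g_\vi[-1]$. For such a map, integration over the fiber sends a $W_L$-symmetric polynomial $g$ to $\sum_{w \in W_{G_\vi}/W_L} w \bigl( g / e(\g_\vi[-1]) \bigr) = \Shuffle\bigl(g/e(\g_\vi[-1])\bigr)$.

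Combining the three steps gives
\[
\mathsf{m}(f_1, f_2) = \pi_* \iota_* \mathsf{p}^*(f_1 \otimes f_2) = \Shuffle\left( \frac{e(T^*\Rep(\vi,\w)[-1]) f_1 f_2}{e(\g_\vi[-1])} \right),
\]
as claimed. The twisted case is then immediate: by definition $\mathsf{m}_\tau$ inserts the endomorphism ``multiplication by $e(h\g_\vi[1])$'' between $\mathsf{p}^*$ and $\iota_*$, so this class simply enters as an extra factor inside the shuffle, producing the second formula. The only real point that needs care is verifying the flag-bundle pushforward formula with the correct Euler class in the denominator (i.e. that $\mathfrak{g}_\vi/\mathfrak{p}$ is identified with $\g_\vi[-1]$ rather than $\g_\vi[1]$, which is a matter of the convention that $P$ preserves the subspace $V^{(1)} \subset V$). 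This is standard, but is the step where sign and grading conventions must be checked once and then propagated consistently.
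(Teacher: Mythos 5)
Your proof is correct and takes essentially the same route as the paper's: you factor $\mathsf{q}$ as the closed immersion $[Z/P]\hookrightarrow[T^*\Rep(\vi,\w)/P]$ followed by the flag fibration $\pi$, exactly as the paper does, and you factor $\mathsf{p}^*$ by combining the vector-bundle projection $Z\to T^*\Rep(\vi,\w)[0]$ with the change of group from $P$ to its Levi $L$, which is the same observation as the paper's auxiliary map $p_2\colon[Z/L]\to[Z/P]$ being an affine bundle, just carried out in the other order. The only cosmetic difference is that you insert the twist $e(h\g_\vi[1])$ after $\mathsf{p}^*$ rather than before it, but since $\mathsf{p}^*$ is a ring map sending tautological classes to tautological classes, these agree.
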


Notice that the kernel $e(T^*\Rep(\vi,\w)[-1])/e(\g_{\vi}[-1])$ of the multiplication map $\mathsf{m}$ coincides with $e(\mathfrak{N}[-1])$, the Euler class of the negative part of the normal bundle defined in Section \ref{Subsubsection tangent and normal classes}. 

\begin{proof}
We decompose the correspondence \eqref{fundamental correspondence stacks} giving rise to the multiplication map $\mathsf{m}$ as follows
\[
 \begin{tikzcd}
    &  \left[Z/G_{\vi_1}\times G_{\vi_2}\right]\arrow[d, "p_2"]\arrow[dl, "p_1"] &\\
        \RG(\vi_1,\w_1)\times\RG(\vi_2,\w_2) &   \left[Z/P\right]\arrow[d, hook, "i"]\arrow[l, ,swap, "\mathsf{p}"] \arrow[r, "\mathsf{q}"] & \RG(\vi,\w)\\
        & \left[T^*\Rep(\vi,\w)/P\right]\arrow[ur, "\pi"] &
    \end{tikzcd}
\]
All the maps are the obvious ones. In particular, $p_1$ is induced by the projection $Z\to T^*\Rep(\vi_1,\w_1)\times T^*\Rep(\vi_2,\w_2) $ and $p_2$ is an affine bundle, so $\mathsf{p}^*=(p_2^*)^{-1}\circ p_1^*$ is just the identity map in the tautological presentation. The map $i$ is induced by the inclusion $Z\hookrightarrow T^*\Rep(\vi,\w)$ so $i_*$ is multiplication by $e(T^*\Rep(\vi,\w)[-1])$, the Euler class of its normal bundle. Finally, the map $\pi_*$ is modelled by the map 
\[
\tilde\pi:T^*\Rep(\vi,\w)_P\to T^*\Rep(\vi,\w)_{G_{\vi}},
\]
which is a fibration by partial flags $G_{\vi}/P$, and hence $\text{Ker}(d\tilde\pi)=\g_\vi[-1]$. Thus
\begin{equation}
    \label{explcit formula pushforward shuffle}
    \pi_*(f)=\tilde\pi_*(f)=\Shuffle \left(\frac{f}{e(\g_\vi[-1])}\right).
\end{equation}
Altogether,
\[
\mathsf{m}(f_1,f_2)=\mathsf{q}_*\circ \mathsf{p}^*(f_1\otimes f_2)=\pi_*\circ i_*\circ (p_2^*)^{-1}\circ p_1^*(f_1\otimes f_2)=\Shuffle \left(\frac{e(T^*\Rep(\vi,\w)[-1])f_1f_2}{e(\g_\vi[-1])}\right),
\]
as claimed. 

Finally, since $\mathsf{m}_\tau$ is defined as the composition of $\mathsf{m}$ with cup product by $e({\hbar}\g_{\vi}[1])$, also its formula follows. 
\end{proof}

\begin{remq}
    In \cite[Section 2.3]{yang2018sheafify}, Yang and Zhao define a framed shuffle algebra similar to the one introduced in Theorem \ref{CoHA multiplication formula}. The only significant difference from our construction is that in their work the torus $A_\w$ is replaced with $G_{\w}\supseteq A_{\w}$. Consequently, in \cite{yang2018sheafify} an additional shuffle symmetrizing the equivariant parameters $a$ associated with $A_\w$ is considered. 
    
    Almost at the same time, Konno introduced an elliptic (and dynamical) version of the shuffle product from Theorem \ref{CoHA multiplication formula} in the case of type-A quivers with one framing vertex \cite{konno2017elliptic}. This structure was then extended in \cite{botta2021shuffle} to arbitrary quivers and framings.
\end{remq}

\subsection{CoHA via abelian quotients}

\label{subsection abelianization of CoHA}

\subsubsection{}

\label{subsubsection 1 abelianization of CoHA}]
Choose a maximal torus $S\subset G_\vi$ with Lie algebra $\s$.  In this section, we introduce an abelianized version of the CoHA using $S$-equivariant rather than $G_\vi$-equivariant cohomology.

Let $B$ be a Borel subgroup containing $S$ with Lie algebra
$\bi=\s\oplus \n$. We have $\g_{\vi}=\s_{\vi}\oplus \n\oplus \n^{\vee}$ and hence $\s^\perp=\n\oplus\n^\vee$. Consider the following diagram
\begin{equation}
\label{abelianization diagram CoHA}
    \begin{tikzcd}
    \left[T^*\Rep(\vi,\w)/S\right]\arrow[d, "Q"]\arrow[r, "i_+"] & \left[T^*\Rep(\vi,\w)\oplus {\hbar} \n/S\right]\arrow[r, "i_-"] & \left[T^*\Rep(\vi,\w)\oplus {\hbar}\s^\perp/S\right]\\
    \left[T^*\Rep(\vi,\w)/B\right]\arrow[d, "q"]& & \\
    \left[T^*\Rep(\vi,\w)/G_{\vi}\right] & & 
    \end{tikzcd}
\end{equation}
Here, $Q$ is an affine bundle with fiber $B/S$, and $q$ is a fibration by flags $G_{\vi}/B$. As usual, the label ${\hbar}$ indicates the presence of an action of $\Ci_{\hbar}^\times$ with weight one.

All the maps are clearly $T_\w$-equivariant.
Passing to cohomology, notice that 
\begin{itemize}
    \item The map $q$ is proper, and the pushforward $q_*$ is surjective\footnote{Surjectivity can be deduced from the explicit presentation of $q_\ast$. In analogy with \eqref{explcit formula pushforward shuffle}, we have $q_\ast(f)=\text{Sym}\left(\frac{f}{e(\n^\vee)}\right)$ for any $f\in \Bbbk[a_\w\times \s_{\vi}]\cong H_{T_\w}([T^*\Rep(\vi,\w)/B]) $. As a consequence, given any symmetric function $g\in \Bbbk[a_\w\times \s_{\vi}]^{W_{G_{\vi}}}\cong H_{T_\w}([T^*\Rep(\vi,\w)/G])$, the function $f=\frac{1}{|W_{G_{\vi}}|}e(\n^{\vee})g\in \Bbbk[a_\w\times \s_{\vi}]$ satisfies $q_\ast(f)=g$.}.
    \item The pullback $Q^*$ is an isomorphism.
\end{itemize}

From now on, we always assume that the maximal tori $S$ and the Borel subgroups $B$ are chosen compatibly, in the sense that if $S_1$, $S_2$, $B_1$ and $B_2$ are the chosen maximal tori and Borel subgroups of $G_{\vi_1}$ and $G_{\vi_2}$ then $S_1\times S_2\subset G_{\vi_1}\times G_{\vi_2}$ is the chosen maximal torus of $G_\vi\supset G_{\vi_1}\times G_{\vi_2}$ and, similarly, $B_1\times B_2$ is the subgroup of $B$ such that 
\[
\bi= \bi_1\oplus \bi_2\oplus \g_{\vi}[1],
\]
or, equivalently, such that 
\[
\n=\n_1\oplus \n_2\oplus \g_{\vi}[1].
\]
We now introduce the $\Bbbk$-module
\[
\CoHA_{\text{ab},\tau}=\bigoplus_{\vi,\w\in \N^I} \CoHA_{\text{ab},\tau}(\vi,\w)
\]
by setting 
\[
\CoHA_{\text{ab},\tau}(\vi,\w)=H_{T_\w}([T^*\Rep(\vi,\w)\oplus {\hbar}\s^\perp/S])
\]
The goal of this section is to promote this module to a $\N^I\times \N^I$-graded algebra and to recover the twisted CoHA $\CoHA_\tau$ from $\CoHA_{\text{ab},\tau}$. For similar constructions in a different framework, see \cite{Davison_critical_CoHA, padurariu_KHA}.  We remark that the need to add the factor $\s^\perp$ and to introduce the horizontal maps in the diagram \eqref{abelianization diagram CoHA} is because we want to reproduce $\CoHA_\tau$ rather than the untwisted $\CoHA$.

\subsubsection{}

Let $\vi=\vi_1+\vi_2$ and $\w=\w_1+\w_2$. The analog of the correspondence \eqref{fundamental correspondence prequotient} now is 
\begin{equation*}
    \begin{tikzcd}
    (T^*\Rep(\vi_1,\w_1)\oplus {\hbar}\s_1^\perp)\times( T^*\Rep(\vi_2,\w_2)\oplus  {\hbar}\s_2^\perp) &  Z_S \arrow[l] \arrow[r, hook]  & T^*\Rep(\vi,\w)\oplus {\hbar}\s^\perp
    \end{tikzcd}
\end{equation*}
where
\[
Z_S=Z\oplus {\hbar}\s_1^\perp \oplus {\hbar}\s_2^\perp.
\]
Quotienting by $S=S_1\times S_2$, we get 
\begin{equation*}
    \begin{tikzcd}[column sep=2em]
    \left[T^*\Rep(\vi_1,\w_1)\oplus  {\hbar}\s_1^\perp/S_1\right]\times\left[ T^*\Rep(\vi_2,\w_2)\oplus  {\hbar}\s_2^\perp/S_2\right] &  \left[Z_S/S\right] \arrow[l, swap, "\mathsf{p}_S"] \arrow[r, "\mathsf{q}_S"] & \left[T^*\Rep(\vi,\w)\oplus  {\hbar}\s^\perp/S\right].
    \end{tikzcd}
\end{equation*}

We now define multiplication maps
\begin{equation}
    \label{abelian multiplication map}
    \mathsf{m}_{\text{ab},\tau}: \CoHA_{\text{ab},\tau}(\vi_1,\w_1)\otimes_\Bbbk \CoHA_{\text{ab},\tau}(\vi_2,\w_2)\to \CoHA_{\text{ab},\tau}(\vi,\w)
\end{equation}
as the composition of the following morphisms 
\begin{enumerate}
    \item the K\"unneth isomorphism
    \[
    \CoHA_{\text{ab},\tau}(\vi_1,\w_1)\otimes_\Bbbk \CoHA_{\text{ab},\tau}(\vi_2,\w_2)\cong H_{T_\w}^*([T^*\Rep(\vi_1,\w_1)\oplus {\hbar}\s_1^\perp/S_1]\times [T^*\Rep(\vi_2,\w_2)\oplus {\hbar}\s_2^\perp/S_2]),
    \]
    \item the pullback
    \[
    \mathsf{p}_S^*: H_{T_\w}^*([T^*\Rep(\vi_1,\w_1)\oplus {\hbar}\s_1^\perp/S_1]\times [T^*\Rep(\vi_2,\w_2)\oplus {\hbar}\s_2^\perp/S_2])\to H_{T_\w}([Z_S/S]),
    \]
    \item the pushforward
    \[
    (\mathsf{q}_S)_*:  H_{T_\w}([Z_S/S])\to H_{T_\w}([T^*\Rep(\vi,\w)\oplus {\hbar}\s^\perp/S])=\CoHA_{\text{ab},\tau}(\vi,\w).
    \]
\end{enumerate}
An argument similar to that of Theorem \ref{CoHA multiplication formula}, but simpler, shows that the multiplication map \eqref{abelian multiplication map} is simply given by cup product by the class 
\begin{equation}
    \label{explicit formula multiplication abelianized CoHA}
    e(T^*\Rep(\vi,\w)[-1])e({\hbar}\g_\vi[-1])e({\hbar}\g_\vi[1]).
\end{equation}
Similarly to Theorem \ref{theorem CoHA is an algebra}, we also have: 
\begin{propn}
    The maps $\mathsf{m}_{\text{ab},\tau}$ define an $\N^I\times \N^I$-graded associative unital $\Bbbk$-algebra structure on $\CoHA_{\text{ab},\tau}$ with unit element given by $1\in \Bbbk=\CoHA(0,0)$.
    \end{propn}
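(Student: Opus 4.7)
The plan is to exploit the explicit formula \eqref{explicit formula multiplication abelianized CoHA} derived in the paragraph preceding the proposition, which identifies $\mathsf{m}_{\text{ab},\tau}$, after the K\"unneth isomorphism, with cup product by a specific Euler class. Since cup product is associative and (graded-)commutative, associativity of $\mathsf{m}_{\text{ab},\tau}$ reduces to a purely combinatorial verification that the ``cumulative'' Euler classes produced by the two bracketings of a triple product agree. A key conceptual simplification over Theorem \ref{CoHA multiplication formula} is that, since we quotient by the maximal torus $S$ rather than $G_\vi$, there is no shuffle symmetrization: the abelianized multiplication is just multiplication by an Euler class in the obvious polynomial ring.

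\textbf{Step 1: Explicit formula.} The first step is the analogue of Theorem \ref{CoHA multiplication formula} in the abelianized setting. Decomposing the correspondence as
\[
\bigl[Z_S/S\bigr]\xhookrightarrow{\,i_S\,}\bigl[(T^*\Rep(\vi,\w)\oplus h\s^\perp)/S\bigr]
\]
and observing that $\mathsf{p}_S$ is (a pullback of) an affine bundle and $\mathsf{q}_S=i_S$ is a closed immersion whose normal bundle has Euler class exactly $e(T^*\Rep(\vi,\w)[-1])\, e(h\g_\vi[-1])\, e(h\g_\vi[1])$ (the three factors coming from the three summands removed when passing from $T^*\Rep(\vi,\w)\oplus h\s^\perp$ to $Z_S$), one obtains the stated formula. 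No flag fibration intervenes, so $(\mathsf{q}_S)_*\circ \mathsf{p}_S^*$ really is just cup product by this class.

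\textbf{Step 2: Associativity.} Given $(f_j)_{j=1,2,3}$ with bidegrees $(\vi_j,\w_j)$, introduce the three-step grading $V=V^{(1)}[0]\oplus V^{(2)}[1]\oplus V^{(3)}[2]$ (and similarly for $W$), which refines both of the two-step gradings obtained by first grouping indices $\{1,2\}$ versus $\{3\}$, or $\{1\}$ versus $\{2,3\}$. This refinement produces decompositions
\[
T^*\Rep(\vi,\w)=\bigoplus_{i,j\in\{1,2,3\}}T^*\Rep(\vi,\w)[i-j],\qquad \g_\vi=\bigoplus_{i,j}\g_\vi[i-j],
\]
and, multiplicativity of Euler classes in direct sums, both bracketings $(f_1\mathsf{m}_{\text{ab},\tau}f_2)\mathsf{m}_{\text{ab},\tau}f_3$ and $f_1\mathsf{m}_{\text{ab},\tau}(f_2\mathsf{m}_{\text{ab},\tau}f_3)$ compute $f_1f_2f_3$ multiplied by
\[
\prod_{i<j} e\bigl(T^*\Rep(\vi,\w)[i-j]\bigr)\cdot \prod_{i\neq j}e\bigl(h\g_\vi[i-j]\bigr)\cdot e(h\g_\vi[1])|_{\text{three-fold}}.
\]
The point is that the ``inner'' multiplication in each bracketing contributes precisely the factors whose index pair $(i,j)$ is concentrated in the grouped pair, while the ``outer'' multiplication contributes the remaining cross-factors; the union exhausts all pairs $i<j$ in the same way regardless of the chosen bracketing.

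\textbf{Step 3: Unit and grading.} For $(\vi_2,\w_2)=(0,0)$ the decomposition is trivial, all three Euler-class factors equal $1$, and the K\"unneth isomorphism identifies $f_1\otimes 1\in\CoHA_{\text{ab},\tau}(\vi,\w)\otimes\Bbbk$ with $f_1$, so $1\in\Bbbk=\CoHA_{\text{ab},\tau}(0,0)$ is a two-sided unit. The bigrading is manifest from the construction, and $\Bbbk$-linearity is inherited from the $\Bbbk$-linearity of K\"unneth and cup product.

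The only place where genuine care is needed is Step 2: one must check that the refinement of the grading really does split each Euler-class contribution cleanly along the two bracketings. Since everything in sight is an Euler class of a direct sum of representations and the three-step grading is compatible with the two two-step coarsenings, this is bookkeeping rather than a substantive obstacle; no geometric input beyond Step 1 is required.
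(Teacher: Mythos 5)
Your proof is correct and follows the approach the paper implicitly intends: the proposition is stated immediately after the explicit Euler-class formula \eqref{explicit formula multiplication abelianized CoHA}, and associativity, unitality, and gradedness are all direct consequences of that formula, exactly as you argue. One minor cosmetic glitch: the trailing factor $e(h\g_\vi[1])|_{\text{three-fold}}$ in the displayed formula of Step~2 is spurious, since the product $\prod_{i\neq j}e\bigl(h\g_\vi[i-j]\bigr)$ already accounts for all off-diagonal blocks of $\g_\vi$, and the notation $[i-j]$ should be read as indexing ordered pairs $(i,j)$ rather than the numerical difference (distinct pairs can share the same value of $i-j$); with these two fixes the Step~2 bookkeeping is exactly right.
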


\subsubsection{}

We now relate $\CoHA_\tau$ with $\CoHA_{\text{ab},\tau}$. 
For simplicity, let us denote 
\[
\RG_{\bi}:=\left[T^*\Rep(\vi,\w)\oplus {\hbar}\n/S\right].
\]
\begin{thm}
\label{theorem abelianization CoHA}
The multiplication map $\mathsf{m}_{\text{ab},\tau}$ uniquely factors through the following commutative diagram
\[
\begin{tikzcd}
 H_{T_{\w_1}}^*(\RG_{\bi_1})\otimes_\Bbbk H_{T_{\w_2}}^*(\RG_{\bi_2}) \arrow[d, dashrightarrow]\arrow[rr, "(i_{1,-})_*\otimes(i_{2,-})_*"]& & \CoHA_{\text{ab},\tau}(\vi_1,\w_1)\otimes_\Bbbk \CoHA_{\text{ab},\tau}(\vi_2,\w_2)\arrow[d, "\mathsf{m}_{\text{ab},\tau}"]\\
H_{T_\w}(\RG_{\bi})\arrow[rr, "(i_-)_*"] & & \CoHA_{\text{ab},\tau}(\vi,\w)
\end{tikzcd}
\]
Denote the dashed arrow by $\mathsf{m}_{\text{ab},\tau}^{\bi}$. Then the following diagram
\[
\begin{tikzcd}
\CoHA_{\tau}(\vi_1,\w_1)\otimes_\Bbbk \CoHA_{\tau}(\vi_2,\w_2)\arrow[d, "\mathsf{m}_{\tau}"]  & & & & & H_{T_{\w_1}}^*(\RG_{\bi_1})\otimes_\Bbbk H_{T_{\w_2}}^*(\RG_{\bi_2})\arrow[d, "\mathsf{m}_{\text{ab},\tau}^{\bi}"]\arrow[lllll, swap,  "(q_{1}\times q_{2})_*\circ ((Q_1\times Q_2)^*)^{-1}\circ (i_{1,+}\times  i_{2,+})^* "] \\
\CoHA_{\tau}(\vi,\w) & & & & &  H_{T_\w}(\RG_{\bi}) \arrow[lllll, swap,  "q_*\circ (Q^*)^{-1}\circ (i_+)^* "]
\end{tikzcd}
\]
is also commutative.
\end{thm}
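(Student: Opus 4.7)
The plan is to reduce the entire statement to explicit polynomial identities, using the identifications $H_{T_\w}(\RG_\bi) \cong \CoHA_{\text{ab},\tau}(\vi,\w) \cong \Bbbk[\liea_\w \times \s]$ and $\CoHA_\tau(\vi,\w)\cong \Bbbk[\liea_\w\times \s]^{W_{G_\vi}}$, together with the analogous identifications in the indexed versions. Under these identifications, the affine bundle $Q$ and the vector bundle projection from $\RG_\bi$ to $[T^*\Rep(\vi,\w)/S]$ induce identity maps in cohomology, so $Q^*$ and $(i_+)^*$ become identities. The zero-section pushforwards $(i_-)_*$ and $(i_{j,-})_*$ become cup product with the Euler classes of the complementary vector bundles $h\n^\vee$ and $h\n_j^\vee$ respectively. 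By the assumed compatibility of the Borel subgroups, $\n = \n_1 \oplus \n_2 \oplus \g_\vi[1]$ and $\n^\vee = \n_1^\vee \oplus \n_2^\vee \oplus \g_\vi[-1]$, so $e(h\n^\vee) = e(h\n_1^\vee)\,e(h\n_2^\vee)\,e(h\g_\vi[-1])$.

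The factorization through $(i_{1,-})_* \otimes (i_{2,-})_*$ follows by a direct computation. By the formula \eqref{explicit formula multiplication abelianized CoHA}, the composition $\mathsf{m}_{\text{ab},\tau} \circ ((i_{1,-})_* \otimes (i_{2,-})_*)$ is cup product by
\[
e(T^*\Rep(\vi,\w)[-1])\,e(h\g_\vi[-1])\,e(h\g_\vi[1])\,e(h\n_1^\vee)\,e(h\n_2^\vee) = e(h\n^\vee)\cdot\bigl(e(T^*\Rep(\vi,\w)[-1])\,e(h\g_\vi[1])\bigr),
\]
so it lies in the image of $(i_-)_*$, and the unique dashed arrow $\mathsf{m}_{\text{ab},\tau}^\bi$ is cup product by $e(T^*\Rep(\vi,\w)[-1])\,e(h\g_\vi[1])$. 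Uniqueness is automatic: $(i_-)_*$ is cup product by the non-zero-divisor $e(h\n^\vee)$ in a polynomial ring, hence injective.

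For the commutativity of the lower square, we first observe that the flag-bundle pushforward $q_*$ admits, exactly as in the proof of Theorem \ref{CoHA multiplication formula} applied to the Borel in place of the parabolic $P$, the polynomial expression $q_*(g) = \Shuffle_{W_{G_\vi}}(g/e(\n^\vee))$, where $e(\n^\vee) = e(\n_1^\vee)\,e(\n_2^\vee)\,e(\g_\vi[-1])$ by compatibility of the Borels. Combined with the explicit formula for $\mathsf{m}_\tau$ from Theorem \ref{CoHA multiplication formula} and the formula for $\mathsf{m}_{\text{ab},\tau}^\bi$ just established, the commutativity of the lower square reduces, for $f_j \in \Bbbk[\liea_{\w_j}\times \s_{\vi_j}]$ and $B := e(T^*\Rep(\vi,\w)[-1])\,e(h\g_\vi[1])$, to the algebraic identity
\[
\Shuffle_{W_{G_\vi}}\!\left(\frac{B\,f_1f_2}{e(\n_1^\vee)\,e(\n_2^\vee)\,e(\g_\vi[-1])}\right) = \Shuffle_{W_{G_\vi}/(W_1\times W_2)}\!\left(\frac{B\,\Shuffle_{W_1}(f_1/e(\n_1^\vee))\,\Shuffle_{W_2}(f_2/e(\n_2^\vee))}{e(\g_\vi[-1])}\right).
\]
This follows at once from the coset decomposition $\Shuffle_{W_{G_\vi}} = \Shuffle_{W_{G_\vi}/(W_1\times W_2)}\circ \Shuffle_{W_1\times W_2}$, the $W_1\times W_2$-invariance of $B$ and $e(\g_\vi[-1])$ (which depend only on the $[\pm 1]$-grading), and the factorization $\Shuffle_{W_1\times W_2}(f_1f_2/(e(\n_1^\vee)e(\n_2^\vee))) = \Shuffle_{W_1}(f_1/e(\n_1^\vee))\,\Shuffle_{W_2}(f_2/e(\n_2^\vee))$, valid because $e(\n_j^\vee)$ depends only on the $\s_j$-variables.

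The main obstacle is purely notational: correctly identifying each geometric map in diagram \eqref{abelianization diagram CoHA} with its polynomial avatar, in particular matching the sign conventions for the Euler classes of the various nilpotent pieces throughout the diagram. Once the polynomial dictionary is in place, the verification reduces to the direct manipulations above, and the theorem becomes an immediate consequence of the shuffle formulas of Theorem \ref{CoHA multiplication formula} together with the decompositions of $\n$ and $\n^\vee$ induced by the chosen compatible Borels.
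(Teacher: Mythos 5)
Your proof is correct, but it takes a genuinely different route from the paper's. The paper's argument is structural: for the first diagram, it introduces an explicit correspondence $[Z_\bi/S]$ with $Z_\bi = Z\oplus h\n_1\oplus h\n_2$, exhibits it as fitting into a pair of Cartesian squares over the $[Z_S/S]$ correspondence, and applies proper base change to obtain the factorization; for the second diagram, it again draws a large ladder of commuting squares (connecting the $S$-, $B$-, and $G_\vi$-quotients) and invokes base change on the Cartesian subsquares. Your argument instead translates every map into a polynomial operation in $\Bbbk[\liea_\w\times\s]$, computes each pushforward as cup product by an explicit Euler class or as a shuffle, and reduces the whole theorem to the coset factorization $\Shuffle_{W} = \Shuffle_{W/(W_1\times W_2)}\circ\Shuffle_{W_1\times W_2}$ together with the decomposition $\n^\vee = \n_1^\vee\oplus\n_2^\vee\oplus\g_\vi[-1]$. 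The trade-off is as you note yourself: your approach makes the dashed arrow and all its properties completely explicit (which is useful since both proofs need to identify $\mathsf{m}_{\text{ab},\tau}^\bi$ as cup product by $e(T^*\Rep(\vi,\w)[-1])\,e(h\g_\vi[1])$ anyway, and your uniqueness argument via non-zero-divisors is cleaner than invoking Cartesian squares), whereas the paper's diagram-chasing sidesteps the need to track sign and grading conventions for the various nilpotent pieces, at the cost of constructing the auxiliary correspondence $[Z_\bi/S]$ and the surrounding Cartesian ladders.
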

\begin{proof}
We claim that the dashed arrow of the first diagram is given by the correspondence
\begin{equation*}
    \begin{tikzcd}
    \left[T^*\Rep(\vi_1,\w_1)\oplus  {\hbar}\n_1/S_1\right]\times\left[ T^*\Rep(\vi_2,\w_2)\oplus  {\hbar}\n_2/S_2\right] &  \left[Z_\bi/S\right] \arrow[l, swap, "\mathsf{p}_\bi"] \arrow[r, "\mathsf{q}_\bi"] & \left[T^*\Rep(\vi,\w)\oplus {\hbar}\n/S\right]
    \end{tikzcd}
\end{equation*}
with 
\[
Z_\bi= Z\oplus {\hbar}\n_1\oplus {\hbar}\n_2.
\]
It fits in the following pair of Cartesian squares
\begin{equation*}
    \begin{tikzcd}[column sep=2em]
    \left[T^*\Rep(\vi_1,\w_1)\oplus  {\hbar}\n_1/S_1\right]\times\left[ T^*\Rep(\vi_2,\w_2)\oplus  {\hbar}\n_2/S_2\right]\arrow[d, "(i_{1,-})\times (i_{2,-})"] &  \left[Z_\bi/S\right]\arrow[d, "i"] \arrow[l, swap, "\mathsf{p}_\bi"] \arrow[r, "\mathsf{q}_\bi"] & \left[T^*\Rep(\vi,\w)\oplus  {\hbar}\n/S\right]\arrow[d, "i_-"]\\
    \left[T^*\Rep(\vi_1,\w_1)\oplus  {\hbar}\s_1^\perp/S_1\right]\times\left[ T^*\Rep(\vi_2,\w_2)\oplus  {\hbar}\s_2^\perp/S_2\right] &  \left[Z_S/S\right] \arrow[l, swap, "\mathsf{p}_S"] \arrow[r, "\mathsf{q}_S"] & \left[T^*\Rep(\vi,\w)\oplus  {\hbar}\s^\perp/S\right]
    \end{tikzcd}
\end{equation*}
where the central arrow $i$ is induced by the inclusion map 
\[
Z_\bi= Z\oplus {\hbar}\n_1\oplus {\hbar}\n_2\hookrightarrow Z\oplus {\hbar}\s_1^\perp\oplus \hbar\s_2^\perp
\]
By compatibility of pullback and pushforward on Cartesian squares, we deduce that
\begin{align*}
    (\mathsf{q}_S)_*\circ \mathsf{p}_S^*\circ ((i_{1,-})_*\otimes(i_{2,-})_*)&=(\mathsf{q}_S)_*\circ i_*\circ \mathsf{p}_\bi^*\\
    &=(i_-)_*\circ (\mathsf{q}_\bi)_*\circ \mathsf{p}_\bi^*,
\end{align*}
as claimed. Notice that this implies that the map $\mathsf{m}_{\text{ab},\tau}^{\bi}$ is the cup product operation by 
\[
e(T^*\Rep(\vi,\w)[-1])e({\hbar}\g_\vi[1]).
\] 
We now prove the commutativity of the second diagram. Firstly, notice that the pullbacks $(i_+)^*$ and $(i_{1,+})^*\otimes (i_{2,+})^*$ are isomorphisms, and the composition $(i_+)^*\circ \mathsf{m}_{\text{ab},\tau}^{\bi}\circ ((i_{1,+})^*\otimes (i_{2,+})^*)^{-1}$ is given by $(\tilde{\mathsf{q}}_S)_*\circ (\tilde{\mathsf{p}}_S)^*\circ e({\hbar}\g_{\vi}[1])$, where $\tilde{\mathsf{p}}_S$ and $\tilde{\mathsf{q}}_S$ are maps fitting in the correspondence 
\begin{equation*}
    \begin{tikzcd}
    \left[T^*\Rep(\vi_1,\w_1)/S_1\right]\times\left[ T^*\Rep(\vi_2,\w_2)/S_2\right] &  \left[Z/S\right] \arrow[l, swap, "\tilde{\mathsf{p}}_S"] \arrow[r, "\tilde{\mathsf{q}}_S"] & \left[T^*\Rep(\vi,\w) /S\right].
    \end{tikzcd}
\end{equation*}
Since $\mathsf{m}_{\tau}$ is by definition $\mathsf{m}\circ e({\hbar}\g_{\vi}[1])$, to complete the proof, it suffices to show that the following diagram
\begin{equation}
    \label{diagram proof abelianization coha}
    \begin{tikzcd}
\CoHA_{\tau}(\vi_1,\w_1)\otimes_\Bbbk \CoHA_{\tau}(\vi_2,\w_2)\arrow[d, "\mathsf{m}=\mathsf{q}_*\circ \mathsf{p}^*"]  & & & H_{T_{\w_1}}^*(\left[T^*\Rep(\vi_1,\w_1)/S_1\right])\otimes_\Bbbk H_{T_{\w_2}}^*(\left[T^*\Rep(\vi_2,\w_2)/S_2\right])\arrow[d, "(\tilde{\mathsf{q}}_S)_*\circ (\tilde{\mathsf{p}}_S)^*"]\arrow[lll, swap,  "(q_{1}\times q_{2})_*\circ ((Q_1\times Q_2)^*)^{-1} "] \\
\CoHA_{\tau}(\vi,\w) & & &  H_{T_\w}(\left[T^*\Rep(\vi,\w)/S\right]) \arrow[lll, swap,  "q_*\circ (Q^*)^{-1} "]
\end{tikzcd}
\end{equation}
is commutative. Consider the commutative diagram
\[
\begin{tikzcd}
\left[T^*\Rep(\vi_1,\w_1)/S_1\right]\times\left[ T^*\Rep(\vi_2,\w_2)/S_2\right]\arrow[d, "Q_1\times Q_2"] &  \left[Z/S\right]\arrow[d] \arrow[l, swap, "\tilde{\mathsf{p}}_S"] \arrow[r, "\tilde{\mathsf{q}}_S"] & \left[T^*\Rep(\vi,\w) /S\right]\arrow[d, "Q"]\\
\left[T^*\Rep(\vi_1,\w_1)/B_1\right]\times \left[T^*\Rep(\vi_2,\w_2)/B_2\right]\arrow[d, "q_1\times q_2"] & \left[Z/B\right]\arrow[d]\arrow[l]\arrow[r] & \left[T^*\Rep(\vi,\w)/B\right]\arrow[d, "q"]\\
\left[T^*\Rep(\vi_1,\w_1)/G_{\vi_1}\right]\times \left[T^*\Rep(\vi_2,\w_2)/G_{\vi_2}\right] &   \left[Z/P\right]\arrow[l, ,swap, "\mathsf{p}"] \arrow[r, "\mathsf{q}"] & \left[T^*\Rep(\vi,\w) /G_{\vi}\right]
\end{tikzcd}
\]
where the unlabelled maps are the obvious ones. The squares in the lower left and upper right are Cartesian, so the commutativity of diagram \eqref{diagram proof abelianization coha} follows from the compatibility of pushforward and pullback on Cartesian squares. Details are left to the reader. 
\end{proof}

\section{Cohomological stable envelopes}

\subsection{Chambers, attracting sets and polarization}

Let $X$ be a smooth quasi-projective symplectic variety with an action a torus $T$ rescaling the symplectic form with weight ${\hbar}^{-1}$ and let $A\subset \ker({\hbar})$. In this work, we will be interested in the case when
\[
X=T^*\Rep(\vi,\w)////^\theta H
\]
where $H$ is either $G_\vi$ or a maximal torus $S\subset G_\vi$ thereof.

\subsubsection{}

\begin{defn}[\cite{maulik2012quantum}]
A chamber $\mathfrak{C}$ is a connected component of $(\text{Cochar}(A)\otimes_\Z \R)\setminus \Delta$, where $\Delta$ is the hyperplane arrangement determined by the $A$-weights of the normal bundles of $X^A$. 
One says that a weight $\chi\in \Char(A)$ is attracting (resp. repelling) with respect to $\mathfrak{C}$ if $\lim_{t\to 0} \chi\circ \sigma (t)=0$ (resp. if $\lim_{t\to 0} \chi\circ \sigma (t)=\infty$) for any $\sigma\in \mathfrak{C}$. The trivial character is called the $A$-fixed character.
\end{defn} 

Restricted to some fixed component $F\subset X^A$, any $A$-equivariant $K$-theoretic class $V$ of $X$ decomposes according to the characters of $A$ in attracting, $A$-fixed and repelling classes
\[
\restr{V}{F}=\restr{V}{F,+}+\restr{V}{F,0}+\restr{V}{F,-}
\]
with respect to a chosen chamber $\mathfrak{C}$. For instance, we have that 
\[
\restr{TX}{F}=\restr{TX}{F,+}+TF+\restr{TX}{F,-},
\]
where $N_F^+=\restr{TX}{F,+}$ and $N_F^-=\restr{TX}{F,-}$ are the attracting and repelling parts of the normal bundle $N_F$ to $F$ in $X$.

If $X$ is a Nakajima variety and $A=A_\w$, these geometrically defined chambers coincide with the Lie-theoretic chambers.
Indeed, the $A_\w$-weights of the normal bundles of the $A_\w$-fixed components of a Nakajima variety $X$ are the roots of the group $\GL(\sum_j \w_j)$, and hence the hyperplane arrangement $\Delta$ coincides with the one that defines the Lie-theoretic chambers of a maximal torus of $\GL(\sum_j \w_j)$.

\subsubsection{}

Let $F$ be a fixed component in $X^{A}$. The attracting set 
\[
\Att{C}{F}=\Set{x\in X}{\lim_{t\to0} \sigma(t)\cdot x\in F, \text{ for some $\sigma \in \mathfrak{C}$}}
\]
is well defined, i.e. independent of the choice of $\sigma\in \mathfrak{C}$, and it is also an affine bundle over $F$. This follows from the standard results in \cite{BBdecomposition}. Notice in particular that $\restr{T(\Att{C}{F})}{F}=N^+_F$.

We also recall the following key definition:
\begin{defn}
The full attracting set $\text{Att}^f_{\mathfrak{C}}(F)$ is the minimal closed subset of X containing $F$ and closed under taking $\Att{C}{-}$.
\end{defn}
\subsubsection{}

The choice of a chamber determines a partial ordering on the set of fixed components of $X^A$ by taking the transitive closure of the relation
\[
\Att{C}{F}\cap F'\neq \emptyset \quad \Rightarrow \quad F\geq F'.
\]
See \cite[Section 3.2.3]{maulik2012quantum} for a proof of the fact that this is really a partial order. 
\subsection{Stable envelopes}

We now recall Maulik-Okounkov's definition of stable envelopes.

%\subsubsection{}

%Stable envelopes depend on the choice of a polarization of $X$, i.e. a class $T^{1/2}X\in K_T(X)$ such that 
%\[
%T^{1/2}X+h^{-1}(T^{1/2}X)^\vee= TX.
%\]
%The standard polarization for a Nakajima variety $\naka(\vi,\w)$ is 
%\[
%T^{1/2}\naka(\vi,\w)=\restr{T\naka(\vi,\w)}{h=0}=T^*\Rep(\vi,\w)-\g_\vi.
%\]
%Stable envelope for different choices of polarization simply differ by some signs. We will always work with the standard polarization defined above, and hence we omit it from the notation.  

\subsubsection{}

Since $A$ acts trivially on $X^A$, we have
\[
H_T(X^A)\cong H_{T/A}(X^A)\otimes H_A(\pt),
\]
where $H_A(\pt)$ is the ring of polynomials in equivariant parameters in $A$. Although the isomorphism above is not canonical, different isomorphisms give the same filtration of $H_T(X^A)$ by the degree $\deg_A$ in the equivariant parameters. 

With this observation, we can recall the definition of cohomological stable envelope.
\begin{thm}[\cite{maulik2012quantum}, Theorem 3.3.4]
Let $\mathfrak{C}$ be a chamber for the action of $A$ on $X$. There exist a unique map of $H_{T}(\pt)$-modules 
\[
\text{Stab}_{\mathfrak{C}}: H_T(X^A)\to H_T(X)
\]
such that
\begin{enumerate}
    \item For every fixed component $F\subset X^A$, then 
    \[
    \restr{\Stab{}{C}{F}}{F}= e(N^-_F),
    \]
    where $\Stab{}{C}{F}$ is the restriction of $\text{Stab}_{\mathfrak{C}}$ to $H_{T}(F)\subset H_T(X^A) $.
    \item $\Stab{}{C}{F}$ is supported on $\text{Att}^f_{\mathfrak{C}}(F)$, i.e. 
    \[
    \restr{\Stab{}{C}{F}}{U}=0,
    \]
    where $U=X\setminus \text{Att}^f_{\mathfrak{C}}(F)$.
    \item $\deg_A(\restr{\Stab{}{C}{F}(\gamma)}{F'})< \frac{1}{2}\text{codim}_{X}(F')$ for all $\gamma\in H_{T/A}(F)$ and $F'< F$.
\end{enumerate}
\end{thm}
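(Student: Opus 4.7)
The plan is to establish uniqueness first and then construct $\text{Stab}_{\mathfrak{C}}$ by induction on the partial order of $A$-fixed components.

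For uniqueness, suppose $s_1$ and $s_2$ both satisfy (1)-(3), fix $\gamma \in H_T(F)$, and set $\beta = s_1(\gamma) - s_2(\gamma)$. By equivariant localization it suffices to show $\restr{\beta}{F'} = 0$ for every fixed component $F'$. For $F' \not\leq F$ this follows from (2) and for $F' = F$ from (1). The case $F' < F$ hinges on a divisibility lemma: a class supported on $\text{Att}^f_{\mathfrak{C}}(F)$ restricts to $F'$ as an element of $H_T(F')$ divisible by $e(N_{F'}^+)$. Since the symplectic form pairs $N_{F'}^+$ and $N_{F'}^-$ non-degenerately, $\text{rk}(N_{F'}^+) = \frac{1}{2}\text{codim}(F')$, and each Chern root of $N_{F'}^+$ carries a non-zero $A$-weight by the chamber condition, so $e(N_{F'}^+)$ is homogeneous of $A$-degree exactly $\frac{1}{2}\text{codim}(F')$. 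Combined with the strict inequality in (3), the quotient $\restr{\beta}{F'}/e(N_{F'}^+)$ is a polynomial of negative $A$-degree, hence zero.

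For existence, I would proceed by induction on the partial order. When $F$ is maximal, $\text{Att}^f_{\mathfrak{C}}(F) = \text{Att}_{\mathfrak{C}}(F)$ is closed, Lagrangian, and an $A$-equivariant affine bundle over $F$; the pushforward of $\gamma$ along the composition of the bundle projection with the closed inclusion defines $\Stab{}{C}{F}(\gamma)$, satisfying (1) and (2) automatically and (3) vacuously. For non-maximal $F$, first build a rough candidate $\tilde{s}_F(\gamma) \in H_T(X)$ supported on $\text{Att}^f_{\mathfrak{C}}(F)$ and restricting to $e(N_F^-)\gamma$ on $F$, obtained as the pushforward of $\gamma$ along the closure of $\text{Att}_{\mathfrak{C}}(F)$, which is Lagrangian by the symplectic structure. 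This candidate may violate (3) on lower components, so correct it by setting
\[
\Stab{}{C}{F}(\gamma) := \tilde{s}_F(\gamma) - \sum_{F' < F} \Stab{}{C}{F'}(\gamma_{F'}),
\]
where each $\gamma_{F'} \in H_T(F')$ is determined recursively to cancel the part of $\restr{\tilde{s}_F(\gamma)}{F'}/e(N_{F'}^-)$ of $A$-degree at least $\frac{1}{2}\text{codim}(F')$. The recursion is well-defined because $e(N_{F'}^-)$ is invertible in the localized ring, and the correction preserves the support condition since $\text{Att}^f_{\mathfrak{C}}(F') \subset \text{Att}^f_{\mathfrak{C}}(F)$ for $F' < F$.

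The main obstacle is the divisibility lemma underlying the uniqueness step. The set $\text{Att}^f_{\mathfrak{C}}(F)$ is the union of attracting strata $\text{Att}_{\mathfrak{C}}(F'')$ for intermediate $F' \leq F'' \leq F$, and different strata may meet $F'$ along different combinations of normal directions, so divisibility by $e(N_{F'}^+)$ is not visible stratum by stratum. Resolving this requires choosing a generic one-parameter subgroup $\sigma \in \mathfrak{C}$ whose flow contracts a neighborhood of $F'$ onto $F'$, analyzing each stratum on a transverse slice, and showing that every irreducible component of $\text{Att}^f_{\mathfrak{C}}(F)$ through $F'$ contains the entire $\sigma$-attracting cone $N_{F'}^+$, so that its Poincar\'e dual restricts to $F'$ as a multiple of $e(N_{F'}^+)$. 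A secondary difficulty is the construction of the approximate cycle $\tilde{s}_F$ for non-maximal $F$: outside the cotangent-bundle setting this requires either a deformation-to-the-normal-cone argument or direct use of the Lagrangian property of attracting sets to produce a well-defined class in $H_T(X)$.
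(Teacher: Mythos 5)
The paper does not prove this theorem: it is cited verbatim from Maulik--Okounkov \cite{maulik2012quantum}, Theorem 3.3.4, so your attempt has to be measured against their argument rather than against anything in this article.

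The divisibility lemma you place at the core of the uniqueness step is false as stated. Take $X = T^*\mathbb{P}^1$ with $A$-fixed points $p_0 < p_1$, so that $\text{Att}^f_{\mathfrak{C}}(p_1) = \mathbb{P}^1 \cup T^*_{p_0}\mathbb{P}^1$. The class $[T^*_{p_0}\mathbb{P}^1]$ is supported on $\text{Att}^f_{\mathfrak{C}}(p_1)$ yet restricts to $p_0$ as $e(N^-_{p_0})$, which is not divisible by $e(N^+_{p_0})$, since the two Euler classes carry opposite nonzero $A$-weights. Geometrically, $\text{Att}^f_{\mathfrak{C}}(F)$ contains $\text{Att}_{\mathfrak{C}}(F')$ for every $F' < F$, and near $F'$ the latter already fills the whole attracting cone $N^+_{F'}$; a support condition on a set filling $N^+_{F'}$ cannot force divisibility by $e(N^+_{F'})$. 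Your proposed resolution is also backwards in two ways: a cycle that near $F'$ contains the total space of $N^+_{F'}$ restricts to $F'$ as a class dividing $e(N^-_{F'})$, not as a multiple of $e(N^+_{F'})$; and the irreducible components of $\text{Att}^f_{\mathfrak{C}}(F)$ through $F'$ arising from closures of $\text{Att}_{\mathfrak{C}}(F'')$ with $F'' > F'$ sit in the repelling directions $N^-_{F'}$ and do not contain $N^+_{F'}$ at all. The lemma Maulik--Okounkov actually use is that a class supported on the closed set $Z_{\leq F'} := \bigcup_{F'' \leq F'}\text{Att}_{\mathfrak{C}}(F'')$ restricts to $F'$ as a multiple of $e(N^-_{F'})$; this has the same $A$-degree $\frac{1}{2}\mathrm{codim}(F')$ so the degree count goes through unchanged, but the passage from your hypothesis (support on $\text{Att}^f_{\mathfrak{C}}(F)$ together with vanishing at every $F''$ with $F' < F'' \leq F$) to support on $Z_{\leq F'}$ is itself a nontrivial inductive step requiring the Bialynicki-Birula filtration and the affine-bundle structure of the attracting strata. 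On the existence side, the obstruction you flag yourself is real: $\overline{\text{Att}_{\mathfrak{C}}(F)}$ is singular in general, so ``pushforward along the closure'' is not an available operation in equivariant cohomology, and Maulik--Okounkov's construction of the approximate class goes through a Steinberg-type Lagrangian correspondence and a deformation argument that do not reduce to a single pushforward.
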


\subsubsection{}

The next lemma is a key result for the application of stable envelopes in the geometric representation theory of Yangians, see \cite{maulik2012quantum}, and it also lies at the heart of the interpretation of stable envelopes in terms of CoHAs that we shall give.
\begin{lma}[{\cite[Section 3.6]{maulik2012quantum}}]
\label{triangle lemma}
Let $\mathfrak{C}$ be a chamber for the action of $A$ on $X$ and let $\mathfrak{C}'\subset \mathfrak{C}$ be a face of some dimension. Let $A'$ be torus whose Lie algebra is the span of $\mathfrak{C}'$ in the Lie algebra of $A$ and let $\mathfrak{C}/\mathfrak{C}'$ be the projection of $\mathfrak{C}$ on the Lie algebra of $A/A'$.
Then the following diagram
\begin{equation*}
    \begin{tikzcd}
    H_T(X^A)\arrow[dr, swap,  "\text{Stab}_{\mathfrak{C}/\mathfrak{C}'}"]\arrow[rr, "\text{Stab}_{\mathfrak{C}}"] & & H_T(X)\\
    & H_T(X^{A'})\arrow[ur, swap, "\text{Stab}_{\mathfrak{C}'}"] &
    \end{tikzcd}
\end{equation*}
is commutative.
\end{lma}

\subsection{Stable envelopes of Nakajima varieties: R-matrices}
\label{subsection Stable envelopes of Nakajima varieties: combinatorics}
\subsubsection{}
Let now $X$ be a Nakajima variety $X=\naka(\vi,\w)$ equipped with the action of the torus $T=T_\w$. A decomposition $\w=\w_1+\w_2\dots+\w_k$ gives a homomorphism 
\[
A=\lbrace (a_1,a_2,\dots ,a_k) \mid a_i\in \Ci^\times  \rbrace\hookrightarrow A_\w.
\]
By iteration of Proposition \ref{propoistion fixed point naka}, we get
\begin{equation}
    \label{tensor decomposition nakajima varieties}
    H_{T_\w}(\naka(\vi,\w)^A)=\bigoplus_{\sum_{j=1}^k \vi_j=\vi} \bigotimes_{j=1}^k H_{T_{\w_j}}(\naka(\vi_j,\w_j)).
\end{equation}
and hence the stable envelope is a collection of maps 
\[
H_{T_{\w_1}}(\naka(\vi_1,\w_1))\otimes_\Bbbk \dots\otimes_\Bbbk H_{T_{\w_k}}(\naka(\vi_k,\w_k)) \to H_{T_\w}(\naka(\vi,\w)).
\]

\subsubsection{}

For this torus action, there are $k!$ possible chambers, namely
\[
\mathfrak{C}_{\sigma}=\lbrace a_{\sigma(1)}<a_{\sigma(2)}<\dots  <a_{\sigma(k)}\rbrace \qquad \sigma \in S_k.
\]
The change of basis matrix relating stable envelopes for two different chambers $\mathfrak{C}_{\sigma}$ and $\mathfrak{C}_{\tau}$, namely the map
\[
R_{\mathfrak{C}_{\tau}, \mathfrak{C}_{\sigma}}:=\text{Stab}_{\mathfrak{C}_{\tau}}^{-1}\circ \text{Stab}_{\mathfrak{C}_{\sigma}}\in H_{T_\w}(X^A)\otimes \Ci(\text{Lie}(A))
\]
is called $R$-matrix and is a fundamental object in the geometric representation theory of Yangians developed by Maulik and Okounkov \cite{maulik2012quantum}. In particular, when $k=3$ different factorizations of $R_{\mathfrak{C}_{\tau}, \mathfrak{C}_{\sigma}}$ provide solutions of the classical Yang-Baxter equation with spectral parameters.
%%%%%%%%%%%%%%%%%%%%%%%%%%%%%
%%%%%%%%%%%%%%%%%%%%%%%%%%%%%

\subsection{Stable envelopes of hypertoric varieties: explicit formulas}

\subsubsection{}

If we consider the GIT-symplectic reduction of $T^*\Rep(\vi,\w)$ by a maximal torus $S\subset G_\vi$ with stability condition $\theta\in\Char(S)$
\[
\naka_S(\vi,\w):=T^*\Rep(\vi,\w)////^\theta S=\mu_S^{-1}(0)^{\theta-ss}/S,
\]
we produce a hypertoric variety rather than a Nakajima quiver variety. Like quiver varieties, they come equipped with an action of $T_\w$ and, for a generic stability condition, they are smooth symplectic varieties, so it makes sense to talk about their stable envelopes. Moreover, also for hypertoric varieties, Kirwan surjectivity holds \cite{Proudfoot_hyper}, and hence 
\[
H_{T_\w}(\naka_S)\cong \Bbbk[\liea_\w\times \s]/I,
\]
where $I$ is some ideal. For an explicit description of the generators of $I$, see \cite{Proudfoot_hyper}.

Remarkably, stable envelopes of hypertoric varieties admit a very explicit description in terms of tautological classes. This description was originally developed in full generality by Shenfeld \cite{Shenfeld}, but for our interests, it suffices to have an explicit formula for the stable envelope map of the form
\begin{equation*}
    \text{Stab}_{\lbrace a< 0\rbrace }: H_{T_\w}(\naka_{S_1}(\vi_1,\w_1)\times \naka_{S_2}(\vi_2,\w_2))\to H_{T_\w}(\naka_S(\vi_1+\vi_2,\w_1+\w_2))
\end{equation*}
Here, $S=S_1\times S_2$ and the torus action considered is the one of the one-dimensional torus $\Ci^\times\subset A_\w$ acting with weight one on $W_1$ and trivially elsewhere. For this one-dimensional torus action, the chamber arrangement is particularly easy. Indeed, we have $\text{Cochar}(\Ci^\times)\otimes_\Z\R=\R$, so are only two chambers: the positive axis $\lbrace a>0 \rbrace$ and the negative axis $\lbrace a<0 \rbrace$ .

Notice that the cohomology 
\begin{equation}
    \label{cohomology fixed component hypertoric variety}
    H_{T_\w}(\naka_{S_1}(\vi_1,\w_1)\times \naka_{S_2}(\vi_2,\w_2))
\end{equation}
of the fixed component $\naka_{S_1}(\vi_1,\w_1)\times \naka_{S_2}(\vi_2,\w_2)\subset \naka_{S}(\vi_1+\vi_2,\w_1+\w_2)^{\Ci^\times}$ is isomorphic to
\begin{equation*}
    \Bbbk[\liea_{\w_1}\times \s_1]/I_1\otimes_\Bbbk \Bbbk[\liea_{\w_2}\times \s_2]/I_2\cong \Bbbk[\liea_\w\times \s]/(I_1+I_2)
\end{equation*}
hence both the domain and the codomain of the stable envelope map $\text{Stab}_{\lbrace a<0\rbrace}$ are quotients of the same ring $\Bbbk[\liea_\w\times \s]$.

\begin{propn}
\label{propn explicit formula stab hypertoric}
Let $p_\gamma\in\Bbbk[\liea_\w\times \s] $ be a polynomial representing a class $\gamma \in H_{T_\w}(\naka_{S_1}(\vi_1,\w_1)\times \naka_{S_2}(\vi_2,\w_2))$. Then the polynomial 
\[
 e(T^*\Rep(\vi,\w)[-1])p_\gamma
\]
represents the class $\text{Stab}_{\lbrace a< 0\rbrace }(\gamma)$.
\end{propn}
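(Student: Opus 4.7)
The plan is to verify Maulik--Okounkov's three axioms for the class represented by $e(T^*\Rep(\vi,\w)[-1])p_\gamma$ and then invoke uniqueness of the stable envelope. The key structural simplification of the hypertoric setting is that the gauge group $S=S_1\times S_2$ is abelian, so the Lie algebra $\s$ and its twist $h\s$ live entirely in weight zero under the $\Ci^\times$-grading; this is what collapses the general formulas for normal bundles into the elementary shape of the proposition.

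For the diagonal axiom, I would compute the normal bundle to $F=\naka_{S_1}(\vi_1,\w_1)\times\naka_{S_2}(\vi_2,\w_2)$ in $\naka_S(\vi,\w)$. The analogue for $\naka_S$ of the formulas in \ref{Subsubsection tangent and normal classes} gives $T\naka_S=T^*\Rep(\vi,\w)-\s-h\s$, and since $\s[-1]=0=h\s[-1]$ the negative part of the normal bundle reduces to $T^*\Rep(\vi,\w)[-1]$. Thus $e(N_F^-)=e(T^*\Rep(\vi,\w)[-1])$, and the diagonal axiom follows once one notes that $p_\gamma$ itself restricts to a polynomial representative of $\gamma$ on $F$ under the natural surjection of tautological rings.

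The support condition is the main step. I would identify $T^*\Rep(\vi,\w)[-1]$, viewed as a $T_\w$-equivariant bundle on $\naka_S(\vi,\w)$ through Kirwan surjectivity, with a genuine subbundle of the restricted tangent bundle (no quotient obstruction arises because $\s[-1]=0$), and then show that its zero locus in $\naka_S(\vi,\w)$ is precisely the full attracting set $\text{Att}^f_{\mathfrak{C}}(F)$. Geometrically this is because vanishing of the repelling directions at a semistable point characterizes membership in the attracting set, and the gauge Lie algebra contributes nothing extra when the gauge group is already a torus. The class $e(T^*\Rep(\vi,\w)[-1])$ is then supported on $\text{Att}^f_{\mathfrak{C}}(F)$, and the support property is inherited by the product with $p_\gamma$. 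The hardest part will be this geometric identification of the zero locus with the full attracting set; an equivalent but more computational route is to verify directly that the restriction of $e(T^*\Rep(\vi,\w)[-1])p_\gamma$ to each fixed component $F'$ outside $\text{Att}^f_{\mathfrak{C}}(F)$ vanishes, by inspecting the induced tautological weights at $F'$, and then conclude via equivariant localization.

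Finally, the polarization axiom is a direct degree count. The polynomial $e(T^*\Rep(\vi,\w)[-1])$ has $A$-degree equal to the rank of $T^*\Rep(\vi,\w)[-1]$, which matches exactly half the codimension of $F$ in $\naka_S(\vi,\w)$ and is strictly smaller than half the codimension of any $F'<F$; meanwhile $p_\gamma$ contributes no $A$-degree since it is pulled back from the fixed locus. Uniqueness of the stable envelope then forces $e(T^*\Rep(\vi,\w)[-1])p_\gamma$ to represent $\text{Stab}_{\lbrace a<0\rbrace}(\gamma)$, completing the plan.
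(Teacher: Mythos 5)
The overall strategy you propose --- verify the three Maulik--Okounkov axioms and invoke uniqueness --- is legitimate and actually matches the remark the paper makes right after the proposition. But the paper itself does not carry out that verification; it defers it entirely to Shenfeld's work. So you are attempting to fill in what the paper treats as a black box, which is fine in principle, but your verification has two real gaps.

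On the support axiom, the key claim --- that the zero locus of the tautological section of $T^*\Rep(\vi,\w)[-1]$ on $\naka_S(\vi,\w)$ is precisely $\text{Att}^f_{\mathfrak{C}}(F)$ --- is not justified, and I do not think it is true as stated. The zero locus of that section is the set of $S$-orbits whose representatives have vanishing $[-1]$-graded components. Such a point has a limit in the prequotient, but that limit need not be semistable; the actual limit in the GIT quotient is then reached only after further conjugation by a cocharacter of $S$, and it can land in a fixed component that is not obviously $\leq F$ in the chamber order. Establishing $\text{(zero locus)}\subseteq\text{Att}^f_{\mathfrak{C}}(F)$ therefore requires an analysis of how GIT semistability interacts with the one-parameter flow, which is exactly the combinatorial/hyperplane-arrangement work that Shenfeld does. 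Your alternative GKM-localization route is closer to what is actually needed, but you only name it: one would have to check vanishing at the relevant fixed points \emph{and} a divisibility condition along the $T_\w$-invariant curves (the GKM relations), since vanishing at isolated fixed points alone does not pin down the support.

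On the degree axiom, the argument as written does not work. You observe that $\deg_A e(T^*\Rep(\vi,\w)[-1]) = \text{rk}\,T^*\Rep(\vi,\w)[-1] = \tfrac12\text{codim}(F)$, and then assert this is strictly smaller than $\tfrac12\text{codim}(F')$ for any $F'<F$. There is no reason for $\text{codim}(F)<\text{codim}(F')$ in general --- fixed components below $F$ in the chamber order can have any codimension. The axiom actually concerns the $A$-degree of the \emph{restriction} to $F'$, which can be strictly smaller than the $A$-degree of the ambient polynomial because some Chern roots of $T^*\Rep(\vi,\w)[-1]$ may restrict at $F'$ to classes with zero $A$-weight. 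The correct argument needs to count, at each $F'<F$, how many of those Chern roots keep nonzero $A$-weight after restriction, and compare to $\tfrac12\text{codim}(F')$; you cannot bypass this by comparing codimensions. Your normalization computation (the diagonal axiom) is correct and does exploit the right simplification, namely $\s[-1]=h\s[-1]=0$, but the other two axioms need genuine additional work before uniqueness can be invoked.
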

For the proof, see \cite[Section 4.3]{Shenfeld}. The fact that this assignment gives a well-defined map can be also seen as follows: by equivariant formality, the $H_{T_\w}(\pt)$-module \eqref{cohomology fixed component hypertoric variety} is free\footnote{Indeed, hypertoric varieties are GKM, which is a stronger condition than equivariant formality, see \cite{Harada_GKM}.}, so one can pick a set of polynomials $\lbrace{p_{\gamma_i}\rbrace}_{i}$ representing a basis $\lbrace{\gamma_i\rbrace}_{i}$ of $H_{T_\w}(\naka_{S_1}(\vi_1,\w_1)\times \naka_{S_2}(\vi_2,\w_2))$ and use the assignment 
\[
\gamma_i\mapsto e(T^*\Rep(\vi,\w)[-1])p_{\gamma_i}
\]
to define a $H_{T_\w}(\pt)$-linear map to $H_{T_\w}(\naka_S(\vi_1+\vi_2,\w_1+\w_2))$. Since this map satisfies the axioms of a stable envelope (see \cite{Shenfeld}), by uniqueness of the latter, it is actually independent of the choice of the basis and of its polynomial representatives.
\subsection{Abelianization of stable envelopes}
\label{abelianization of stable envelopes}

%%%%%%%%%%%%%%%%%%%%%%%%%%%%%
%%%%%%%%%%%%%%%%%%%%%%%%%%%%%

\label{subsection abelianization of stable envelopes}

\subsubsection{}

In this section, we recall the abelianization of stable envelopes, which is a procedure that relates the stable envelopes of a Nakajima variety
\[
\naka(\vi,\w)=T^*\Rep(\vi,\w)////G_\vi=\mu_{\vi,\w}^{-1}(0)^{G_\vi-ss}/G_{\vi}
\]
with the ones of the abelian quotient 
\[
\naka_S(\vi,\w):=T^*\Rep(\vi,\w)////S=\mu_S^{-1}(0)^{S-ss}/S.
\]
In the following, we assume that the stability condition $\theta\in \Char(G_\vi)\subset \Char(S)$, which determines the $(\theta, G_\vi)$-semistable locus $\mu_{\vi,\w}^{-1}(0)^{G_{\vi}-ss}$ and $(\theta, S)$-semistable locus $\mu_{S}^{-1}(0)^{S-ss}$ respectively, is chosen generically, in such a way that both the varieties above are smooth.
Abelianization of stable envelopes was first developed by Shenfeld \cite{Shenfeld} for singular cohomology and then reproduced in elliptic cohomology by Aganagic and Okounkov in \cite{aganagic2016elliptic}. The following version is the cohomological limit of abelianization in elliptic cohomology. 
Let $B$ be a Borel subgroup of $G_\vi$ containing $S$, and let $\s\subset \bi \subset \g_{\vi}$ be the corresponding Lie algebras. Since $\mu_S$ is the projection of $\mu_{\vi,\w}$ to $\s^*$, which we identify with $\s$ via trace pairing, we have
\[
\naka_S(\vi,\w)= \mu_{\vi,\w}^{-1}(\s^\perp)^{S-ss}/S,
\]
where $\s^\perp=\n\oplus \n^\vee$ is the subspace of $\g_{\vi}$ vanishing on $\s$.
Consider the diagram
\begin{equation}
\label{abelianization diagram for naka}
    \begin{tikzcd}
    \mu_{\vi,\w}^{-1}(0)^{G_{\vi}-ss}/S \arrow[r, "j_+"]\arrow[d, "\Pi"] &\mu_{\vi,\w}^{-1}(\bi^\perp)^{S-ss}/S \arrow[r, "j_-"]& \mu_{\vi,\w}^{-1}(\s^\perp)^{S-ss}/S  \\
    \mu_{\vi,\w}^{-1}(0)^{G_{\vi}-ss}/B \arrow[d, "\pi"] & &\\
    \mu_{\vi,\w}^{-1}(0)^{G_{\vi}-ss}/G & & 
    \end{tikzcd}
\end{equation}Notice the similarity with the abelianization diagram for the CoHA \eqref{abelianization diagram CoHA}. Indeed, the latter was inspired by diagram \eqref{abelianization diagram for naka}.

Notice that $\Pi$ is an affine bundle with fiber $B/S$, $\pi$ is a fibration by flags $G_{\vi}/B$, and $j_-$ is a closed embedding.

\subsubsection{}
All the maps in the diagram are $T_\w$-equivariant, so the diagram above restricts to fixed components of the action of some torus $A\subset T_\w$. Let $\pi^A$, $\Pi^A$, etc. be the restrictions of the maps above. 
Passing to cohomology, notice that 
\begin{itemize}
    \item The maps $\pi$ and $\pi^A$ are proper, and the pushforwards $\pi_*$ and $\pi^A_*$ are surjective\footnote{Since the map $\pi$ is the restriction of $q: [T^*\Rep(\vi,\w/B]\to [T^*\Rep(\vi,\w/G]$, its surjecticity follows from surjectivity of $q_*$ and Kirwan surjectivity, discussed in sections \ref{subsubsection 1 abelianization of CoHA} and \ref{kirwan surjectivity}, respectively. By Proposition \ref{propoistion fixed point naka}, surjectivity of $\pi$ for arbitrary dimension vectors $(\vi,\w)$ implies surjectivity of $\pi^A_\ast$.}
    \item The pullbacks $\Pi^*$ and $(\Pi^A)^*$ are isomorphisms.
\end{itemize}

Fix some component $F\subset \naka(\vi,\w)^A$ and a chamber $\mathfrak{C}$ for the action of $A$. Let $F_B$ be the (unique) $A$-fixed component in $\pi^{-1}(F)$ whose normal weights in  $\pi^{-1}(F)$ are non-attracting, i.e. 
\[
\restr{\text{Ker}(d\pi)}{F_B, +}=0.
\]
Overall, we get a diagram
\[
\begin{tikzcd}
(\Pi^A)^{-1}(F_B) \arrow[r, "j^A_+"]\arrow[d, "\Pi^A"] &F'_S \arrow[r, "j^A_-"]& F_S \\
F_B \arrow[d, "\pi^A"] & &\\
F & & 
\end{tikzcd}
\]
whose vertices are $A$-fixed components. 
\subsubsection{}
We can now review the statement of the abelianization theorem.
\begin{thm}[{\cite[Section 4.3]{aganagic2016elliptic}}]
\label{theorem abelianization stable envelopes}
The composition $\Stab{}{C}{F_S}\circ (j^A_-)_*$ factors through $(j_-)_*$, i.e. there exists a map making the diagram
\[
\begin{tikzcd}
H_{T_\w}(F'_S)\arrow[r, "(j^A_-)_*"]\arrow[d, dashrightarrow] & H_{T_\w}(F_S)\arrow[d, "\Stab{}{C}{F_S}"]\\
H_{T_\w}(\mu^{-1}(\bi^\perp)^{S-ss}/S)\arrow[r, "(j_-)_*"] & H_{T_\w}(\naka_S(\vi,\w))
\end{tikzcd}
\]
commute.
Denote a choice of dashed arrow by $\Stab{\bi}{C}{F_S}$. Then the following diagram also commutes 
\[
\begin{tikzcd}
H_{T_\w}(F)\arrow[d, "\Stab{}{C}{F}"]  & & & H_{T_\w}(F_S')\arrow[d, "\Stab{\bi}{C}{F_S}"]\arrow[lll, swap,  "(\pi^A)_*\circ ((\Pi^A)^*)^{-1}\circ (j^A_+)^* "] \\
H_{T_\w}(\naka(\vi,\w)) & & &  H_{T_\w}(\mu^{-1}(\bi^\perp)^{S-ss}/S) \arrow[lll, swap,  "\pi_*\circ (\Pi^*)^{-1}\circ (j_+)^* "]
\end{tikzcd}
\]
\end{thm}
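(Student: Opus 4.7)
The plan is to deduce both assertions from the uniqueness characterization of cohomological stable envelopes, after first establishing the factorization that defines $\mathrm{Stab}^{\bi}_{\mathfrak{C}}(F_S)$.

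For the factorization, recall that $j_-$ is a closed embedding, so $(j_-)_*$ identifies $H_{T_\w}(\mu^{-1}(\bi^\perp)^{S-ss}/S)$ with the classes in $H_{T_\w}(\naka_S(\vi,\w))$ supported on its image, modulo the kernel controlled by the long exact sequence for the open complement. It therefore suffices to check that $\mathrm{Stab}_{\mathfrak{C}}(F_S) \circ (j^A_-)_*(\delta)$ is supported on $j_-(\mu^{-1}(\bi^\perp)^{S-ss}/S)$ for every $\delta$. This can be done using the explicit tautological formula of Proposition \ref{propn explicit formula stab hypertoric}: the hypertoric stable envelope is multiplication by the Euler class $e(T^*\Rep(\vi,\w)[-1])$, and the factors corresponding to the $\n$-directions in the normal bundle of $F_S'$ in $F_S$ annihilate classes restricted to the complementary open, yielding the required support.

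For the main commutativity, uniqueness of $\mathrm{Stab}_{\mathfrak{C}}(F)$ reduces the statement to verifying the three defining axioms for the composite. Given $\delta \in H_{T_\w}(F_S')$ and $\gamma := (\pi^A)_* \circ ((\Pi^A)^*)^{-1} \circ (j^A_+)^*(\delta)$, one needs to show that
\[
\widetilde{\mathrm{Stab}}(\gamma) := \pi_* \circ (\Pi^*)^{-1} \circ (j_+)^* \circ \mathrm{Stab}^{\bi}_{\mathfrak{C}}(F_S)(\delta)
\]
satisfies the stable envelope axioms for $F$ applied to $\gamma$. The normalization axiom follows from proper base change applied to the restriction of the whole abelianization diagram to the $A$-fixed locus, combined with the factorization $e(N^-_F) = e(N^-_{F_S'}) / e(\text{flag normal weights})$, where the denominator is precisely what the flag pushforward $\pi^A_*$ absorbs. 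The support axiom is preserved because $(j_+)^*$ is a pullback along an inclusion, $(\Pi^*)^{-1}$ is an isomorphism along an affine bundle, and $\pi_*$ is a proper pushforward along a flag fibration; the output is thus supported on the flag-saturation of $\pi(\mathrm{Att}^f_{\mathfrak{C}}(F_B))$, which coincides with $\mathrm{Att}^f_{\mathfrak{C}}(F)$ in $\naka(\vi,\w)$. The degree estimate is deduced from the corresponding bound on $\mathrm{Stab}^{\bi}_{\mathfrak{C}}(F_S)(\delta)$ by tracking how codimensions transform: $\mathrm{codim}_{\naka(\vi,\w)} F'$ differs from $\mathrm{codim}_{\naka_S(\vi,\w)} F_S'$ by contributions from the flag fiber and the shift by $\bi^\perp$, which are exactly compensated by the degree shifts induced by $\pi_*$ and $(j_+)^*$.

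The main obstacle I expect is the degree axiom, since the passage between abelian and non-abelian fixed loci changes codimensions in a nontrivial way, and the $A$-degree bound has to be tracked carefully through the proper pushforward $\pi_*$ (which can lower degrees) and the open restriction $(j_+)^*$. A secondary subtlety is independence from the choice of lift $\mathrm{Stab}^{\bi}_{\mathfrak{C}}(F_S)$: two lifts differ by a class in the kernel of $(j_-)_*$, so one must verify that such ambiguity is annihilated by $(j_+)^*$, which reduces to a compatibility between the closed embedding $j_-$ and the open restriction $j_+$ reflecting the fact that $\mu^{-1}(0)^{G_{\vi}-ss}$ and $\mu^{-1}(\bi^\perp)^{S-ss}$ become disjoint after removing $\mu^{-1}(0)^{G_{\vi}-ss}$ itself from $\mu^{-1}(\bi^\perp)^{S-ss}$.
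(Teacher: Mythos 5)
The paper does not prove this theorem: it is cited verbatim from \cite{aganagic2016elliptic}, and what follows the statement is only a remark reconciling the present notation (the composition $(\Pi^*)^{-1}\circ (j_+)^*$) with the non-algebraic map $\tilde{j}_+$ used in the original. So there is no proof in the paper to compare against. Your strategy — verify the existence of the lift geometrically, then check the three stable envelope axioms for the abelianized composite and invoke uniqueness — is the strategy of the cited references (Shenfeld's thesis in cohomology, Aganagic--Okounkov in the elliptic setting), so the architecture is right. But as written the argument has real gaps, which you yourself half-concede.

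Two specific problems. First, the factorization step leans on Proposition \ref{propn explicit formula stab hypertoric}, but that proposition is stated only for a one-parameter subtorus $\C^\times\subset A_\w$ acting on a chunk of the framing, not for the arbitrary subtorus $A$ and fixed component $F_S$ appearing in Theorem \ref{theorem abelianization stable envelopes}; to use a tautological formula here you need Shenfeld's formula in full generality, not the special case quoted. More importantly, ``the Euler factors annihilate classes restricted to the complementary open'' is not an argument: multiplication by an Euler class does not in general produce a class with prescribed support, and the actual factorization statement is geometric --- that the closure of the attracting cycle of $F_S$ meets $\mu^{-1}(\bi^\perp)^{S-ss}/S$ properly, which is what makes $\mathrm{Stab}_{\mathfrak C}(F_S)\circ (j_-^A)_*$ land in the image of $(j_-)_*$. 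That needs a cycle-theoretic argument, not a cup-product one. Second, the degree axiom is flagged as the ``main obstacle'' and then handled only in one sentence (``exactly compensated by the degree shifts''); this is exactly the place where the non-abelian correction terms (from the flag fibers) must be tracked and it is not done. Similarly, the independence of the choice of lift is raised but only asserted. Each of these is a genuine and substantive gap; as it stands the proposal is a correct reconstruction of the proof's skeleton but not a proof.
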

Since each of the maps in the composition $(\pi^A)_*\circ ((\Pi^A)^*)^{-1}\circ (j^A_+)^* $ is surjective, the previous theorem expresses the stable envelopes of the Nakajima variety $\naka(\vi,\w)$ in terms of those of the hypertoric variety $\naka_S(\vi,\w)$.

\begin{remq}
Actually, in the original reference \cite{aganagic2016elliptic} Theorem \ref{theorem abelianization stable envelopes} is presented in a slightly different way. Indeed, the composition $(\Pi^*)^{-1}\circ(j_+)^*$ is replaced with pullback by a single map 
\[
\tilde{j}_+ :\mu_{\vi,\w}^{-1}(0)^{G_{\vi}-ss}/B\to \mu_{\vi,\w}^{-1}(\bi^\perp)^{S-ss}/S,
\]
which is the canonical map obtained from the $C^\infty$-isomorphism of $G/B\cong U/U\cap S$ bundles 
\begin{equation}
    \label{isomoprhism flag bundles abelianization}
    \mu_{\vi,\w}^{-1}(0)^{G_{\vi}-ss}/B\cong \mu_{\vi,\w}^{-1}(0)\cap \mu_{\R}^{-1}(\eta)/ (U\cap S)
\end{equation}
and the inclusion 
\[
\mu_{\vi,\w}^{-1}(0)\cap \mu_{\R}^{-1}(\eta)/ (U\cap S)\hookrightarrow \mu_{\vi,\w}^{-1}(\bi^\perp)^{S-ss}/ S.
\]
Here $U$ is a compact form of $G_\vi$ and the real moment map $\mu_{\R}^{-1}(\eta)$ plays the role of the stability condition \cite{Nakajimainstantons}. 
Similarly, also the composition $((\Pi^A)^*)^{-1}\circ(j^A_+)^*$ is replaced with $(\tilde{j}_-^A)^*$.
Despite appearances, it is easy to see that $(\tilde{j}_+)^*=(\Pi^*)^{-1}\circ(j_+)^*$ and $((\Pi^A)^*)^{-1}\circ(j^A_+)^*=(\tilde{j}_-^A)^*$ under the identification \eqref{isomoprhism flag bundles abelianization}. Indeed, the cohomology of $ \mu_{\vi,\w}^{-1}(0)^{G_{\vi}-ss}/B$ is generated by the Chern roots of the tautological bundles, and both $(\tilde{j}_+)^*$ and $(\Pi^*)^{-1}\circ(j_+)^*$ send Chern roots to Chern roots. Hence, they are the same map. This argument proves the first equality; the proof of the second one is similar. Therefore, the statement of the theorem above differs from the original one \cite{aganagic2016elliptic} only in the notation. However, in this article, we avoid using the maps $\tilde{j}_+$ and $\tilde{j}^A_+$ because they are not algebraic. 
\end{remq}
%%%%%%%%%%%%%%%%%%%%%%%%%%%%%
%%%%%%%%%%%%%%%%%%%%%%%%%%%%%

%%%%%%%%%%%%%%%%%%%%%%%%%%%%%
%%%%%%%%%%%%%%%%%%%%%%%%%%%%%

\section{Framed semistable CoHA via stable envelopes}

\subsection{The framed semistable preprojective CoHA}
\label{Section framed semistable CoHA}
\subsubsection{}

Fix a quiver $Q$ and consider the graded $\Bbbk$-module
\[
\HhQ:=\bigoplus_{\vi,\w\in  \N^I} \HhQ(\vi,\w),
\]
where 
\[
\HhQ(\vi,\w):=H_{T_\w}(\naka_Q(\vi,\w)).
\]
As usual, we will drop the reference to the quiver whenever this is clear from the context.

We now define a graded multiplication on $\Hh$ via stable envelopes 
\[
\text{Stab}_{\lbrace a_{1}<a_2 \rbrace}: \Hh(\vi_1,\w_1)\otimes_\Bbbk \Hh(\vi_2,\w_2)\to \Hh(\vi_1+\vi_2,\w_1+\w_2)
\]
\begin{thm}
The previous multiplication map defines an associative unital $\N^I\times \N^I$-graded $\Bbbk$-algebra structure on $\Hh$ with unit given by the element $1\in \Bbbk=\Hh_{0,0}$.
\end{thm}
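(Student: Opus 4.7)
The algebra axioms split into $\Bbbk$-bilinearity, a unit, and associativity. The first two are essentially formal: $\Bbbk$-bilinearity is automatic since stable envelopes are $H_{T_\w}(\pt)$-module maps, and for the unit, $\naka(0,0)$ is a point so $\Hh(0,0) = \Bbbk$, and multiplication by an element of $\Hh(0,0)$ is the stable envelope for the subtorus $\Ci^\times\subset A_\w$ acting with weight one on $W^{(1)}=0$ and trivially on $W^{(2)}=W$. Since $W^{(1)}=0$ this subtorus acts trivially on all of $\naka(\vi,\w)$, the negative normal bundle vanishes, and the stable envelope reduces to the identity; the right unit is symmetric.

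The main content is associativity, which I would prove by realizing both bracketings $(f_1 f_2)f_3$ and $f_1(f_2 f_3)$ as two factorizations, via Lemma \ref{triangle lemma}, of a single stable envelope attached to a $2$-dimensional torus. Fix decompositions $\vi = \vi_1+\vi_2+\vi_3$ and $\w = \w_1+\w_2+\w_3$ and let $A\subset A_\w$ be a $2$-dimensional subtorus separating $W_1, W_2, W_3$ with cocharacter coordinates $a_1, a_2, a_3$. Iterating Proposition \ref{propoistion fixed point naka}, one connected component of $\naka(\vi,\w)^A$ is the triple product $\naka(\vi_1,\w_1)\times\naka(\vi_2,\w_2)\times\naka(\vi_3,\w_3)$, whose $T_\w$-equivariant cohomology equals $\Hh(\vi_1,\w_1)\otimes_\Bbbk\Hh(\vi_2,\w_2)\otimes_\Bbbk\Hh(\vi_3,\w_3)$ by K\"unneth. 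Consider the chamber $\mathfrak{C}=\{a_1<a_2<a_3\}$ with its two codimension-one walls
\[
\mathfrak{C}'_{12}=\overline{\mathfrak{C}}\cap\{a_1=a_2\},\qquad \mathfrak{C}'_{23}=\overline{\mathfrak{C}}\cap\{a_2=a_3\},
\]
whose associated subtori $A'_{12},A'_{23}\subset A$ are the $1$-dimensional subtori separating $W_1\oplus W_2$ from $W_3$, and $W_1$ from $W_2\oplus W_3$, respectively. Lemma \ref{triangle lemma} applied to each wall yields
\[
\text{Stab}_{\mathfrak{C}'_{12}}\circ\text{Stab}_{\mathfrak{C}/\mathfrak{C}'_{12}} \,=\, \text{Stab}_\mathfrak{C} \,=\, \text{Stab}_{\mathfrak{C}'_{23}}\circ\text{Stab}_{\mathfrak{C}/\mathfrak{C}'_{23}}.
\]
It then remains to identify each outer composition with a bracketing of the triple product: on the component $\naka(\vi_1+\vi_2,\w_1+\w_2)\times\naka(\vi_3,\w_3)$ of $\naka(\vi,\w)^{A'_{12}}$ the quotient torus $A/A'_{12}$ acts only on the first factor, so the reduced stable envelope restricts to $\text{Stab}_{\{a_1<a_2\}}\otimes\mathrm{id}$, while $\text{Stab}_{\mathfrak{C}'_{12}}$ is by construction the multiplication $\text{Stab}_{\{a_{12}<a_3\}}$; the composition is thus $(f_1 f_2)f_3$, and the symmetric analysis of $\mathfrak{C}'_{23}$ produces $f_1(f_2 f_3)$.

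I expect the main obstacle to be the product-compatibility claim used in the last step: that the stable envelope of a product of Nakajima varieties whose torus acts trivially on one factor decomposes as the external tensor product of the individual stable envelope with the identity on the trivial factor. The natural argument is via uniqueness: the candidate tensor-product map satisfies the three defining axioms of the stable envelope (support in the full attracting set, normalization by $e(N^-)$ on the diagonal component, and the degree bound on off-diagonal restrictions), because each axiom factors compatibly through the product decomposition. By Maulik--Okounkov's uniqueness theorem it must then coincide with the genuine stable envelope on the product, and the two factorizations of $\text{Stab}_\mathfrak{C}$ translate directly into $(f_1 f_2)f_3 = f_1(f_2 f_3)$.
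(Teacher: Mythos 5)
Your proof is correct and follows essentially the same route as the paper: associativity is established by applying Lemma \ref{triangle lemma} twice to the chamber $\{a_1<a_2<a_3\}$, factoring $\text{Stab}_{\{a_1<a_2<a_3\}}$ through each of the two codimension-one faces and identifying the two factorizations with the two bracketings $(f_1f_2)f_3$ and $f_1(f_2f_3)$. The paper's proof omits the unit verification and tacitly assumes the product-compatibility statement (that the stable envelope of a product with a trivially-acted factor is the external tensor product with the identity), both of which you usefully spell out; one small slip is calling $A$ two-dimensional while giving it three cocharacter coordinates $a_1,a_2,a_3$ — in the paper's convention $A$ is three-dimensional and the hyperplane arrangement $\{a_i=a_j\}$ is degenerate along the diagonal, which changes nothing in the argument.
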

\begin{proof}
Let $\vi=\vi_1+\vi_2+\vi_3$ and $\w=\w_1+\w_2+\w_3$ and consider the stable envelope map
\[
\text{Stab}_{\lbrace a_1<a_2<a_3 \rbrace}: \Hh(\vi_1,\w_1)\otimes_\Bbbk \Hh(\vi_2,\w_2)\otimes_\Bbbk \Hh(\vi_3,\w_3)\to \Hh(\vi,\w).
\]
Applying Lemma \ref{triangle lemma} twice with $\mathfrak{C}=\lbrace{ a_1<a_2<a_3 \rbrace}$ and $\mathfrak{C}/\mathfrak{C}'$ equal either to $\lbrace{ a_1<a_2 \rbrace}$ or to $\lbrace{ a_2<a_3 \rbrace}$, we get exactly the commutative diagram
\[
\begin{tikzcd}
& \Hh(\vi_1,\w_1)\otimes_\Bbbk\Hh(\vi_2+\vi_3,\w_2+\w_3)\arrow[rd, "\text{Stab}_{\lbrace a_{1}<a_{23} \rbrace}"] & \\
\Hh(\vi_1,\w_1)\otimes_\Bbbk \Hh(\vi_2,\w_2)\otimes_\Bbbk \Hh(\vi_3,\w_3) \arrow[rr, "\text{Stab}_{\lbrace a_1<a_2<a_3 \rbrace}"]\arrow[rd, swap, "\text{Stab}_{\lbrace a_1<a_2 \rbrace}\otimes 1"] \arrow[ru, "1\otimes \text{Stab}_{\lbrace a_2<a_3 \rbrace}"] & & \Hh(\vi,\w) \\
& \Hh(\vi_1+\vi_2,\w_1+\w_2)\otimes_\Bbbk \Hh(\vi_3,\w_3)\arrow[ru, swap, "\text{Stab}_{\lbrace a_{12}<a_3 \rbrace}"] & 
\end{tikzcd}
\]
that proves the claim.
\end{proof}

\begin{defn}
We call the associative unital $\N^I\times \N^I$-graded $\Bbbk$-algebra $(\HhQ, \text{Stab})$ the framed semistable preprojective cohomological Hall algebra of $Q$. 
\end{defn}
\begin{remq}
Unlike in $\CoHA$, some graded components $\Hh_{\vi,\w}$ of $\Hh$ are zero. This is a consequence of the fact that, because of the stability condition, some Nakajima varieties $\naka(\vi,\w)$ are empty. However, the Nakajima variety $\naka(0,0)$ is a singleton for every quiver $Q$ and stability condition, and hence $\Hh(0,0)$ is always equal to $\Bbbk$.
\end{remq}
\begin{remq}
One could naively try to define a CoHA structure on $\HhQ$ by means of a correspondence of the form 
\begin{equation*}
    \begin{tikzcd}
    (\mu_{\vi_1,\w_1}^{-1}(0))^{ss}\times (\mu_{\vi_2,\w_2}^{-1}(0))^{ss} &  (Z\cap \mu_{\vi,\w}^{-1}(0))^{ss}\arrow[l, dashed] \arrow[r, hook]  & (\mu_{\vi,\w}^{-1}(0))^{ss}
    \end{tikzcd}
\end{equation*}
which is the restriction of \eqref{fundamental correspondence prequotient} to the appropriate zero loci and semistable loci. However, it is easy to check that the dashed map is generally not well defined because it does not respect semistability. Substituting $(Z\cap \mu_{\vi,\w}^{-1}(0))^{ss}$ with the open subset for which the dashed map is well defined does not solve the problem because then the inclusion inside $ (\mu_{\vi,\w}^{-1}(0))^{ss}$ is not proper anymore, and hence it does not descend to a pushforward in cohomology. Stable envelopes can be interpreted as a refined correspondence, obtained by iterated corrections on lower strata, that guarantees both the existence of a dashed map and the properness of the solid arrow (the corrections come with multiplicities, so, in general, the variety $(Z\cap \mu_{\vi,\w}^{-1}(0))^{ss}$ is replaced with a Borel-Moore cycle, i.e. a collection of varieties with multiplicities). As a consequence, one can look at $\text{Stab}_{\mathfrak{C}}$ as the natural substitute of the correspondence giving rise to a CoHA structure on the cohomology of Nakajima varieties. 
\end{remq}

\subsection{Framed CoHA vs. framed semistable preprojective CoHA}

\subsubsection{}
In this section, we review Aganagic-Okounkov's construction of a map\footnote{AO denoted this map by $\mathbf{s}$. The letter $``s''$ being overused in this paper, we substitute it with $\bm\psi$.} 
\[
\bm\psi: \Hh(\vi,\w)\to \CoHA(\vi,\w)
\]
that is, up to a multiplicative factor, a section of the restriction map 
\begin{equation*}
    \label{formula non-abelian stab intro section}
    \bm{j}^*:H_{T_\w}(\RG(\vi,\w))\twoheadrightarrow H_{T_\w}(\naka(\vi,\w))
\end{equation*}
associated with the inclusion
\[
\bm{j}:\naka(\vi,\w)= \mu^{-1}_{\vi,\w}(0)^{G_{\vi}-ss}/G_{\vi}\hookrightarrow [T^*\Rep(\vi,\w)/ G_{\vi}] = \RG(\vi,\w).
\]
Like the stable envelopes, the map $\bm\psi$ solves an extension problem, which goes under the name of non-abelian stable envelope \cite{okounkov2020inductiveII}.  One can think of the ordinary ``abelian'' stable envelopes\footnote{We urge the reader to distinguish between the abelianized stable envelopes from Section \ref{abelianization of stable envelopes}, which are the stable envelopes for the hypertoric variety associated with a Nakajima variety, and the ``abelian'' stable envelopes, which are just the ordinary Maulik-Okounkov stable envelopes as in \eqref{stab in intro nonabelian stab}.}
\begin{equation}
\label{stab in intro nonabelian stab}
    \Stab{}{C}{F}: H_{T_\w}(F)\to  H_{T_\w}(\naka(\vi,\w))
\end{equation}
as a map extending a topological class defined on some attracting locus $\Att{C}{F}\subset \naka(\vi,\w)$ to a class on $\naka(\vi,\w)$ supported on $\text{Att}_{\mathfrak C}^{f}(F)$. This extension is obtained by iteration, progressively adding attracting sets sprouting up from the boundary. Analogously, the non-abelian stable envelope $\bm\psi$ extends a topological class $\alpha$ on $\naka(\vi,\w)= \mu^{-1}_{\vi,\w}(0)^{G_{\vi}-ss}/G_{\vi}$ to a class $\bm\psi(\alpha)$ on the ambient stack $\RG(\vi,\w)$. As for the ordinary stable envelope, the non-abelian extension problem is characterized by a support condition, namely by requiring the class $\bm\psi(\alpha)$ to be supported on $[\mu^{-1}_{\vi,\w}(0)/G_{\vi}]\subset\RG(\vi,\w)$. The map $\bm\psi$ can be built inductively using a stratification of the unstable locus of $\mu^{-1}_{\vi,\w}(0)$ \cite{okounkov2020inductiveII}. This makes the analogy with the abelian stable envelope even more transparent since the strata of the unstable locus can be described in terms of attracting sets for the $G_\vi$-action.

As shown by Aganagic and Okounkov in \cite{aganagic2017quasimap}, non-abelian stable envelopes of Nakajima varieties can be reduced to the abelian case. This approach slightly hides the geometric intuition behind the abovementioned extension problem but allows the known tools developed for ordinary stable envelopes to transfer to this non-abelian setting. For this reason, in this article, we focus on Aganagic-Okounkov's approach\footnote{Although AO worked in K-theory, their construction easily adapts in ordinary cohomology.} to non-abelian stable envelopes.

The key idea behind the construction of $\bm\psi$ is to enlarge the space $\mu^{-1}_{\vi,\w}(0)^{G_{\vi}-ss}$ in such a way that it hosts a copy of $T^*\Rep(\vi,\w)$. To this end, consider the space of representations $T^*\Rep(\vi,\w+\vi)$, which we visualize vertex-wise as 
\begin{equation}
    \label{visualization extra framing}
\begin{tikzcd}
V_i\arrow[d, shift left=.75ex, "i_{i}"]\arrow[drr, shift left=.75ex, "\phi_i"] & & \\
W_i\arrow[u, shift left=.75ex, "j_{i}"] &  & V_i\arrow[ull, shift left=.75ex, "\psi_i"]
\end{tikzcd}
\end{equation}
Notice that the framing extension produces an extra action of a copy of $G_\vi$ on the framing, which we denote by $G_\vi'$.
Define 
\[
\mu^{-1}_{\vi,\w+\vi}(0)^{iso}\subset \mu^{-1}_{\vi,\w+\vi}(0)
\]
as the open subset of $\mu^{-1}_{\vi,\w+\vi}(0)$ determined by the requirement that for every $i\in I $ either $\phi_i$ or $\psi_i$ is an isomorphism, in such a way that we have a factorization 
\[
\mu^{-1}_{\vi,\w+\vi}(0)^{iso}\subset \mu^{-1}_{\vi,\w+\vi}(0)^{G_{\vi}-ss}\subset \mu^{-1}_{\vi,\w+\vi}(0).
\]
For simplicity, we assume from now on that the stability condition is such that $\phi_i$ is an isomorphism for every $i\in I$. It is straightforward how to modify the construction below to deal with the most general case. 

\subsubsection{}
\label{section construction wrong direciton map}

Choose an isomorphism $\phi_i$ for all $i\in I$ and consider the $G_\vi\times T_\w$-equivariant inclusion 
\[
\iota_\phi : T^*\Rep(\vi,\w)\to \mu^{-1}_{\vi,\w+\vi}(0)^{iso} \qquad (x,y,i,j)\to (x,y,i,j,\phi, \chi(x,y,i,j))
\]
where $\chi_i(x,y,i,j)=-\mu_{\vi,\w}(x,y,i,j)\circ \phi_i^{-1}$. Notice that this formula for $\chi$ is forced by the moment map condition on the codomain.
To get rid of the dependence on the isomorphisms $\phi_i$, we compose $\iota_\phi$ with the quotient by $G_{\vi}'$:
\[
T^*\Rep(\vi,\w)\to \mu^{-1}_{\vi,\w+\vi}(0)^{iso}/G_{\vi}'.
\]
Quotienting further by $G_{\vi}$, we get a $T_\w$-equivariant map
\[
\bm\iota: \RG(\vi,\w)\to [\naka(\vi,\w+\vi)/G_{\vi}'].
\]

\subsubsection{}

Passing to cohomology, we get a map 
\[
\bm\iota^*: H_{T_\w}([\naka(\vi,\w+\vi)/G_{\vi}'])\to H_{T_\w}(\RG(\vi,\w)).
\]
Notice that this map factors as 
\begin{equation*}
    \begin{tikzcd}
    H_{T_\w}([\naka(\vi,\w+\vi)/G_{\vi}'])\arrow[d, equal]\arrow[r, "\bm\iota^*"] &H_{T_\w}(\RG(\vi,\w))\arrow[ddd, equal]\\
    H_{T_\w\times G_{\vi}'}(\naka(\vi,\w+\vi))\arrow[d, equal] & \\
    H_{T_\w\times G_{\vi}\times G_{\vi}'}(\mu^{-1}_{\vi,\w+\vi}(0)^{G_{\vi}-ss})\arrow[d, "\text{res}"] & \\
    H_{T_\w\times G_{\vi}\times G_{\vi}'}(\mu^{-1}_{\vi,\w+\vi}(0)^{iso})]\arrow[r, "\iota_\phi^*"] &  H_{T_\w\times G_{\vi}}(T^*\Rep(\vi,\w))
    \end{tikzcd}
\end{equation*}
Because of the $G_\vi'$ action, the dependence of the bottom horizontal map on $\phi$ is only apparent. Moreover, since by definition of $\mu^{-1}_{\vi,\w+\vi}(0)^{iso}$ the maps $\phi_i$ as in \eqref{visualization extra framing} are isomorphisms, the Chern roots of $G_\vi$ are identified with those of $G_{\vi}'$ in 
\[
H_{T_\w\times G_{\vi}\times G_{\vi}'}(\mu^{-1}_{\vi,\w+\vi}(0)^{iso}).
\]

\subsubsection{}

To complete the construction of $\bm\psi$ we need a map 
\begin{equation}
\label{desired map}
    H_{T_\w}(\naka(\vi,\w))\to H_{T_\w\times G_\vi'}(\naka(\vi,\w+\vi))=H_{T_\w}([\naka(\vi,\w+\vi)/G_{\vi}']).
\end{equation}
Once again this map is provided by stable envelopes. First of all, notice that the trivial identity
\[
\naka(\vi,\w)=\naka(\vi,\w)\times \naka(0,\vi)
\]
allows us to think of $\naka(\vi,\w)$ as a fixed component of $\naka(\vi,\w+\vi)$ by the action of the one dimensional subtorus $A\subset A_{\w+\vi}$ acting on $W_i$ with weight one and trivially elsewhere. 
Now consider the stable envelope map
\begin{equation}
    \label{stable envelope for weight functions}
    \text{Stab}_{\lbrace a< 0\rbrace }: H_{T_\w\times G_\vi'}(\naka(\vi,\w)\times \naka(0,\vi))\to H_{T_\w\times G_\vi'}(\naka(\vi,\w+\vi)).
\end{equation}
for the distinguished component $\naka(\vi,\w)\times \naka(0,\vi)\subset \naka(\vi,\w+\vi)^{\Ci^\times}$. Notice that although we have defined stable envelopes for the action of some torus $T$ on $\naka(\vi,\w)$, the correspondence $\text{Stab}_{\lbrace a< 0\rbrace}$ actually gives a map even in $T_\w\times G_\vi'$ equivariant cohomology. This follows from the explicit construction of $\text{Stab}_{\lbrace a< 0\rbrace}$ as a Lagrangian correspondence, and the fact that the action of $A$ commutes with the one of $T_\w\times G_\vi'$.

Finally, noticing that $\naka(0,\vi)$ is a singleton and composing \eqref{stable envelope for weight functions} with 
\[
 H_{T_\w}(\naka(\vi,\w))\hookrightarrow  H_{T_\w\times G_\vi'}(\naka(\vi,\w))=H_{T_\w\times G_\vi'}(\naka(\vi,\w)\times \naka(0,\vi)),
\]
we get the desired map \eqref{desired map}.

\subsubsection{}

Putting it all together, we get the sought-after non-abelian stable envelope map
\[
\bm\psi: H_{T_\w}(\naka(\vi,\w))\to H_{T_\w\times G_{\vi}'}(\naka(\vi,\w+\vi))= H_{T_\w}([\naka(\vi,\w+\vi)/G_{\vi}'])\xrightarrow{\bm\iota^*} H_{T_\w}(\RG(\vi,\w)).
\]
 The support condition of \eqref{stable envelope for weight functions} implies the following:
\begin{lma}[{\cite[Proposition 1,3]{aganagic2017quasimap}}]
\label{Lemma section up to a factor}
The map $\bm\psi$ is supported on $[\mu_{\vi,\w}^{-1}(0)/G_{\vi}]\subset [T^*\Rep(\vi,\w)/G_{\vi}]$. Moreover, the restriction $\bm{j}^*(\bm\psi(\alpha))$ is divisible by $e({\hbar}\g_{\vi})$
and 
\[
\alpha= \bm{j}^*(\bm\psi(\alpha))/e({\hbar}\g_{\vi}).
\]
\end{lma}
%%%%%%%%%%%%%%%%%%%%%%%%%%%%%%%%%
%%%%%%%%%%%%%%%%%%%%%%%%%%%%%%%%%

\subsubsection{}

We can now state the main theorem of this article.

\begin{thm}
\label{main theorem}
The map $\bm\psi: \HhQ\to \CoHAQ_{\tau}$ is an injective morphism of $\N^I\times \N^I$-graded $\Bbbk$-algebras.
\end{thm}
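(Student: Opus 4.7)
The theorem has two parts: injectivity and the algebra morphism property. Injectivity is immediate from Lemma \ref{Lemma section up to a factor}: one has $\bm{j}^*(\bm\psi(\alpha))=e(h\g_\vi)\cdot\alpha$, and the class $e(h\g_\vi)$ is a polynomial in the equivariant parameter $h$ of degree $\dim\g_\vi$ with leading coefficient $1$; in particular it is not a zero divisor on the equivariantly formal $H_{T_\w}(\pt)$-module $H_{T_\w}(\naka(\vi,\w))$, so $\bm\psi(\alpha)=0$ forces $\alpha=0$. The heart of the proof is therefore the algebra morphism identity
\[
\bm\psi\circ\text{Stab}_{\lbrace a_1<a_2\rbrace}=\mathsf{m}_\tau\circ(\bm\psi\otimes\bm\psi).
\]

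The strategy I would adopt is to abelianize this identity and reduce it to a statement about explicit Euler classes in the hypertoric setting. Theorem \ref{theorem abelianization CoHA} factorizes $\mathsf{m}_\tau$ through the surjective map $q_*\circ(Q^*)^{-1}\circ(i_+)^*$ and the intermediate multiplication $\mathsf{m}_{\text{ab},\tau}^{\bi}$, which by formula \eqref{explicit formula multiplication abelianized CoHA} is cup product by $e(T^*\Rep(\vi,\w)[-1])\,e(h\g_\vi[-1])\,e(h\g_\vi[1])$. Dually, Theorem \ref{theorem abelianization stable envelopes} factorizes $\text{Stab}_{\lbrace a_1<a_2\rbrace}$ through the surjective map $\pi_*\circ(\Pi^*)^{-1}\circ(j_+)^*$ and the hypertoric stable envelope $\text{Stab}^\bi_{\lbrace a_1<a_2\rbrace}$, which by Proposition \ref{propn explicit formula stab hypertoric} is cup product by $e(T^*\Rep(\vi,\w)[-1])$.

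To bridge the two abelianizations, I would construct an abelian analogue $\bm\psi^\bi$ of $\bm\psi$ by rerunning the enlarged-framing construction of Section \ref{section construction wrong direciton map} with $G_\vi$ replaced by its maximal torus $S$, and verify that $\bm\psi^\bi$ fits in a commutative square with $\bm\psi$ joining the abelianization diagrams \eqref{abelianization diagram CoHA} and \eqref{abelianization diagram for naka}. Granted this compatibility, surjectivity of $\pi_*$ and $q_*$ (together with the isomorphisms $(\Pi^*)^{-1}$ and $(Q^*)^{-1}$) reduces the algebra morphism property to the abelian identity
\[
\bm\psi^\bi\circ\text{Stab}^\bi_{\lbrace a_1<a_2\rbrace}=\mathsf{m}_{\text{ab},\tau}^{\bi}\circ(\bm\psi^\bi\otimes\bm\psi^\bi),
\]
which by the explicit formulas above becomes an identity between products of equivariant Euler classes in tautological variables.

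The main obstacle will be the careful bookkeeping of these Euler class factors. The common factor $e(T^*\Rep(\vi,\w)[-1])$ arises from the hypertoric stable envelope on the left and from $\mathsf{m}_{\text{ab},\tau}^{\bi}$ on the right; the additional factor $e(h\g_\vi[-1])\,e(h\g_\vi[1])$ on the right must then be accounted for by the difference between $\bm\psi^\bi$ applied to $\text{Stab}^\bi_{\lbrace a_1<a_2\rbrace}(\alpha_1\otimes\alpha_2)$ and $\bm\psi^\bi(\alpha_1)\,\bm\psi^\bi(\alpha_2)$. This is where the abelian analogue of Lemma \ref{Lemma section up to a factor} enters: via the fixed-locus decomposition
\[
e(h\g_\vi)=e(h\g_{\vi_1})\,e(h\g_{\vi_2})\,e(h\g_\vi[1])\,e(h\g_\vi[-1]),
\]
the factors $e(h\g_{\vi_j})$ are absorbed into the individual $\bm\psi^\bi(\alpha_j)$, leaving precisely the off-diagonal $e(h\g_\vi[1])\,e(h\g_\vi[-1])$ that the twisted CoHA multiplication supplies. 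Once this matching is verified rigorously and lifted back through the abelianization diagrams, the algebra morphism property follows.
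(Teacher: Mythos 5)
Your proposal is correct and mirrors the paper's proof: injectivity from Lemma~\ref{Lemma section up to a factor}, construction of the abelianized map $\bm\psi_{\text{ab}}$ by rerunning the enlarged-framing construction over $S$, verification of the abelian identity via the explicit Euler-class formulas and the factorization $e(h\g_\vi)=e(h\g_{\vi_1})e(h\g_{\vi_2})e(h\g_\vi[1])e(h\g_\vi[-1])$, and then lifting through the two abelianization theorems. One small point to tidy up: you attribute the kernel $e(T^*\Rep[-1])\,e(h\g_\vi[-1])\,e(h\g_\vi[1])$ of formula~\eqref{explicit formula multiplication abelianized CoHA} to $\mathsf{m}_{\text{ab},\tau}^{\bi}$, but that is the kernel of $\mathsf{m}_{\text{ab},\tau}$ on the $\s^\perp$-level; the $\bi$-level map $\mathsf{m}_{\text{ab},\tau}^{\bi}$ is cup product by $e(T^*\Rep[-1])\,e(h\g_\vi[1])$ only — and the lifting in step two is more delicate than your sketch suggests, requiring the commuting-cube arguments and an injectivity lemma for $(i_-)_*$ rather than a single commutative square.
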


In words, the theorem above says that the map $\bm\psi$ realizes the equivariant cohomology of Nakajima varieties as a subalgebra of the framed CoHA, and on this subalgebra CoHA's multiplication is identified with taking stable envelopes. 

\begin{remq}
The map $\bm\psi$ is not compatible with the cup products $(\alpha,\beta)\mapsto \alpha\beta$ on its domain and codomain. This can be easily seen as follows. By contradiction, fix a non-trivial graded component $\HhQ(\vi,\w)$ with $\vi\neq 0$ and assume that $\bm\psi(\alpha\beta)=\bm\psi(\alpha)\bm\psi(\beta)$. Composing with $\bm j^*$, which is clearly compatible with the cup product, and applying Lemma \ref{Lemma section up to a factor}, we get 
\[
e({\hbar}\g_{\vi}) \alpha\beta= \bm j^*\bm\psi(\alpha\beta)= \bm j^* (\bm\psi(\alpha)\bm\psi(\beta))=\bm j^*\bm\psi(\alpha)\bm j^*\bm\psi(\beta)=e({\hbar}\g_{\vi})^2 \alpha\beta
\]
for all $\alpha$ and $\beta$. This equation would imply $e({\hbar}\g_{\vi})^2=e({\hbar}\g_{\vi})$, which is a contradiction since $e({\hbar}\g_{\vi})\neq 0$.
\end{remq}

%%%%%%%%%%%%%%%%%%%%%%%%%%%%
%%%%%%%%%%%%%%%%%%%%%%%%%%%%

\subsection{Proof of Theorem \ref{main theorem}: Step one}

First of all, notice that injectivity follows at once from Lemma \ref{Lemma section up to a factor}, so it remains to prove that $\bm\psi$ is a morphism of graded algebras. The proof can be conceptually divided into two parts. In the first part, we prove an ``abelianized'' version of the statement and in the second we apply Theorem \ref{theorem abelianization CoHA} and Theorem \ref{theorem abelianization stable envelopes} to reduce the claim to its abelianized version.

\subsubsection{}

Define
\[
\Hh_{\text{ab}}:=\bigoplus_{\vi,\w\in \N^I} \Hh_{\text{ab}}(\vi,\w)
\]
where 
\[
\Hh_{\text{ab}}(\vi,\w):=H_{T_\w}(\naka_S(\vi,\w)).
\]
With the same arguments of Section \ref{Section framed semistable CoHA}, one proves that the stable envelope maps
\[
\text{Stab}_{\lbrace a_1<a_2\rbrace}: \Hh_{\text{ab}}(\vi_1,\w_1)\otimes_\Bbbk \Hh_{\text{ab}}(\vi_2,\w_2)\to \Hh_{\text{ab}}(\vi_1+\vi_2,\w_1+\w_2)
\]
give $\Hh_{\text{ab}}$ the structure of a $\N^I\times \N^I$-graded associative $\Bbbk$-algebra.

\subsubsection{}
\label{construction of abelianized psi}
We now mimic the construction of Section \ref{section construction wrong direciton map} to define an abelianized version of $\bm\psi$, i.e. a map
\[
\bm{\psi}_{\text{ab}}: \Hh_{\text{ab}}\to \CoHA_{\text{ab},\tau}.
\]
Let $S\subset G_{\vi}$ be a maximal torus. As in section \ref{section construction wrong direciton map}, choose an isomorphism $\phi_i$ for all $i\in I$ and consider $S\times T_\w$-equivariant inclusion 
\[
\iota_{\text{ab},\phi} : T^*\Rep(\vi,\w)\oplus {\hbar}\s^\perp\to \mu^{-1}_{\vi,\w+\vi}(\s^\perp)^{iso}
\]
by 
\[
(x,y,i,j,v)\mapsto (x,y,i,j,\phi, \chi(x,y,i,j)+v\circ \phi^{-1})
\]
where $\chi_i(x,y,i,j)=-\mu_{\vi,\w}(x,y,i,j)\circ \phi_i^{-1}$.

To get rid of the dependence by the isomorphisms $\phi_i$, we compose $\iota_{\text{ab},\phi}$ with the quotient by $G_{\vi}'$ to get a map
\[
\bm\iota_{\text{ab}} : T^*\Rep(\vi,\w)\oplus {\hbar}\s^\perp\to \mu^{-1}_{\vi,\w+\vi}(\s^\perp)^{iso}/G_{\vi}'.
\]
Quotienting further by $S$, we get a $T_\w$-equivariant map
\[
\bm\iota_{\text{ab}}: [T^*\Rep(\vi,\w)\oplus  {\hbar}\s^\perp/S]\to [\naka_S(\vi,\w+\vi)/G_{\vi}'],
\]
which we denote in the same way.

%%%%%%%%%%%%%%%%%%%%%%%%%%%%
%%%%%%%%%%%%%%%%%%%%%%%%%%%%

\subsubsection{}

Passing to cohomology, we get a map 
\[
\bm{\iota}_{\text{ab}}^*: H_{T_\w}([\naka_S(\vi,\w+\vi)/G_{\vi}'])\to H_{T_\w}( [T^*\Rep(\vi,\w)\oplus  {\hbar}\s^\perp/S])
\]
that, composed with the top horizontal map of the diagram
\[
\begin{tikzcd}
 H_{T_\w}(\naka_S(\vi,\w))\arrow[d, hook]\arrow[r] & H_{T_\w}(\left[\naka_S(\vi,\w+\vi)/G_{\vi'}\right])\arrow[d, equal]\\
 H_{T_\w\times G_\vi'}(\naka_S(\vi,\w)\times X_S(0,\vi))\arrow[r, "\text{Stab}_{\lbrace a< 0\rbrace }"]& H_{T_\w\times G_\vi'}(\naka_S(\vi,\w+\vi))
\end{tikzcd}
\]
gives the sought-after map 
\[
\bm{\psi}_{\text{ab}}: \Hh_{\text{ab}}(\vi,\w)= H_{T_\w}(\naka_S(\vi,\w))\to H_{T_\w}( [T^*\Rep(\vi,\w)\oplus  {\hbar}\s^\perp/S])=\CoHA_{\text{ab},\tau}(\vi,\w)
\]
\begin{lma}
The map $\bm{\psi}_{\text{ab}}$ is an algebra morphism.
\end{lma}
\begin{proof}
Let $\vi_1+\vi_2=\w$ and $\w_1+\w_2=\w$. We have to show that the diagram
\begin{equation*}
    \begin{tikzcd}
      \Hh_{\text{ab}}(\vi_1,\w_1)\otimes_\Bbbk \Hh_{\text{ab}}(\vi_2,\w_2)\arrow[d, "\text{Stab}_{\lbrace a_1<a_2\rbrace}"]\arrow[rr, "\bm{\psi}_{\text{ab}}\otimes \bm{\psi}_{\text{ab}}"]& & \CoHA_{\text{ab},\tau}(\vi_1,\w_1)\otimes_\Bbbk \CoHA_{\text{ab},\tau}(\vi_2,\w_2)\arrow[d, "\mathsf{m}_{\text{ab},\tau}"]\\
      \Hh_{\text{ab}}(\vi,\w) \arrow[rr, "\bm{\psi}_{\text{ab}}"] & &  \CoHA_{\text{ab},\tau}(\vi,\w)
    \end{tikzcd}
\end{equation*}
commutes.
Let $\gamma=\gamma_1\otimes \gamma_2\in \Hh_{\text{ab}}(\vi_1,\w_2)\otimes_\Bbbk \Hh_{\text{ab}}(\vi_2,\w_2)$ and let $p_\gamma=p_{\gamma_1}\otimes p_{\gamma_2}$ be a tautological presentation. Applying Proposition \ref{propn explicit formula stab hypertoric} and the definition of $\bm{\psi}_{\text{ab}}$ we get
\begin{align*}
    \bm{\psi}_{\text{ab}}\circ \text{Stab}_{\lbrace a_1<a_2\rbrace}(\gamma)
    &=\bm{\psi}_{\text{ab}}(e(T^*\Rep(\vi,\w)[-1])p_\gamma)\\
    &=\bm{\iota}_{\text{ab}}^*\circ \text{Stab}_{\lbrace a< 0\rbrace }(e(T^*\Rep(\vi,\w)[-1])p_\gamma)\\
    &= \bm{\iota}_{\text{ab}}^*(e({\hbar}\Hom(V',V))e(T^*\Rep(\vi,\w)[-1])p_\gamma)\\
    &=e({\hbar}\Hom(V,V))e(T^*\Rep(\vi,\w)[-1])p_\gamma\\
    &= e({\hbar}\g_\vi)e(T^*\Rep(\vi,\w)[-1])p_\gamma
\end{align*}
Similarly, applying Proposition \ref{propn explicit formula stab hypertoric} and equation \eqref{explicit formula multiplication abelianized CoHA} we get
\begin{align*}
    \mathsf{m}_{\text{ab},\tau}\circ (\bm{\psi}_{\text{ab}}\otimes \bm{\psi}_{\text{ab}})(\gamma_1\otimes \gamma_2)
    &= \mathsf{m}_{\text{ab},\tau}\circ ((\bm{\iota}_{\text{ab}}^*\circ \text{Stab}_{\lbrace a_1< 0\rbrace }(p_{\gamma_1}))\otimes( \bm{\iota}_{\text{ab}}^*\circ \text{Stab}_{\lbrace a_2< 0\rbrace }(p_{\gamma_2})))\\
    &=\mathsf{m}_{\text{ab},\tau} (e({\hbar}\Hom(V_1,V_1))p_{\gamma_1}\otimes e({\hbar}\Hom(V_2,V_2))p_{\gamma_2})\\
    &=e(T^*\Rep(\vi,\w)[-1])e({\hbar}\g_{\vi}[-1])e({\hbar}\g_{\vi}[1])e({\hbar}\g_{\vi_1})e({\hbar}\g_{\vi_2})p_{\gamma_1}p_{\gamma_2}
\end{align*}
Since $\g_\vi=\g_{\vi_1}\oplus \g_{\vi_2}\oplus\g_{\vi}[-1]\oplus \g_{\vi}[1]$ and $p_\gamma=p_{\gamma_1}p_{\gamma_2}$, the result follows. 
\end{proof}

%%%%%%%%%%%%%%%%%%%%%%%%%%%%
%%%%%%%%%%%%%%%%%%%%%%%%%%%%

\subsection{Proof of Theorem \ref{main theorem}: Step two}

\subsubsection{}

We can now complete the proof by combining abelianization of stable envelopes and abelianization of CoHA. Applying the first part of Theorem \ref{theorem abelianization CoHA} and Theorem \ref{theorem abelianization stable envelopes}, one gets the commutative diagram
\[
\begin{tikzcd}
      & \Hh_{\text{ab}}(\vi_1,\w_2)\otimes_\Bbbk \Hh_{\text{ab}}(\vi_2,\w_2)\arrow[dd, shift left=1.5ex, "\text{Stab}_{\lbrace a_1<a_2\rbrace}" {yshift=-20pt}]\arrow[r, "\bm{\psi}_{\text{ab}}\otimes \bm{\psi}_{\text{ab}}"] & \CoHA_{\text{ab},\tau}(\vi_1,\w_1)\otimes_\Bbbk \CoHA_{\text{ab},\tau}(\vi_2,\w_2)\arrow[dd, "\mathsf{m}_{\text{ab},\tau}"]\\
      H_{T_{\w_1}}(\naka_{\bi_1})\otimes_\Bbbk H_{T_{\w_2}}(\naka_{\bi_2})\arrow[ur]\arrow[dd, "\text{Stab}_{\lbrace a_1<a_2\rbrace}^{\bi}(\naka_{\bi_1}\times \naka_{\bi_2})"] & H_{T_{\w_1}}^*(\RG_{\bi_1})\otimes_\Bbbk H_{T_{\w_2}}^*(\RG_{\bi_2})\arrow[ur]\arrow[dd, swap, shift right=1ex, "\mathsf{m}_{\text{ab},\tau}^{\bi}" {yshift=-15pt}]& \\
      & \Hh_{\text{ab}}(\vi,\w) \arrow[r, "\bm{\psi}_{\text{ab}}"] &  \CoHA_{\text{ab},\tau}(\vi,\w)\\\
       H_{T_{\w}}(\naka_{\bi})\arrow[ur]  & H_{T_\w}(\RG_{\bi})\arrow[ur] &
    \end{tikzcd}
\]
where $\naka_{\bi_i}:=\mu_{\vi,\w}^{-1}(\bi_i^\perp)^{S-ss}/S_i$. We now construct horizontal maps that fill the two missing edges of the cube above. 
\begin{lma}
There exist dashed maps
\[
\begin{tikzcd}
      & \Hh_{\text{ab}}(\vi_1,\w_2)\otimes_\Bbbk \Hh_{\text{ab}}(\vi_2,\w_2)\arrow[dd, shift left=1.5ex, "\text{Stab}_{\lbrace a_1<a_2\rbrace}" {yshift=-20pt}]\arrow[r, "\bm{\psi}_{\text{ab}}\otimes \bm{\psi}_{\text{ab}}"] & \CoHA_{\text{ab},\tau}(\vi_1,\w_1)\otimes_\Bbbk \CoHA_{\text{ab},\tau}(\vi_2,\w_2)\arrow[dd, "\mathsf{m}_{\text{ab},\tau}"]\\
      H_{T_{\w_1}}(\naka_{\bi_1})\otimes_\Bbbk H_{T_{\w_2}}(\naka_{\bi_2})\arrow[ur]\arrow[r, dashrightarrow]\arrow[dd, "\text{Stab}_{\lbrace a_1<a_2\rbrace}^{\bi}(\naka_{\bi_1}\times \naka_{\bi_2})"] & H_{T_{\w_1}}^*(\RG_{\bi_1})\otimes_\Bbbk H_{T_{\w_2}}^*(\RG_{\bi_2})\arrow[ur]\arrow[dd, swap, shift right=1ex, "\mathsf{m}_{\text{ab},\tau}^{\bi}" {yshift=-15pt}]& \\
      & \Hh_{\text{ab}}(\vi,\w) \arrow[r, "\bm{\psi}_{\text{ab}}"] &  \CoHA_{\text{ab},\tau}(\vi,\w)\\
       H_{T_{\w}}(\naka_{\bi})\arrow[ur]\arrow[r, dashrightarrow]  & H_{T_\w}(\RG_{\bi})\arrow[ur] &
    \end{tikzcd}
\]
making the diagram commute. 
\end{lma}
\begin{proof}
We begin with the lower edge. Unraveling the definition of $\bm{\psi}_{\text{ab}}$, one sees that we need to find a dashed map that makes the following diagram commute 
\begin{equation}
\label{sought after dashed arrow}
\begin{tikzcd}
  H_{T_{\w}}(\naka_S(\vi,\w))\arrow[r, "\text{Stab}_{\lbrace a<0\rbrace}"] & H_{T_\w\times G_{\vi'}}(\naka_S(\vi,\w+\vi))\arrow[r, equal] &H_{T_\w}(\left[\naka_S(\vi,\w+\vi)/G_{\vi}'\right]) \arrow[r, "\bm{\iota}_{\text{ab}}^*"]&  \CoHA_{\text{ab},\tau}(\vi,\w)\\
  H_{T_{\w}}(\naka_{\bi})\arrow[rrr, dashrightarrow]\arrow[u, "(j_-)_*"] & & & H_{T_\w}(\RG_{\bi})\arrow[u, "(i_-)_*"]
\end{tikzcd}
\end{equation}
The fiber squares\footnote{Notice that the central square is Cartesian because the action of $G_{\vi}'$ is free on the $iso$-locus.} 
\begin{equation}
\label{fiber squares step two proof}
    \begin{tikzcd}T^*\Rep(\vi,\w)\oplus {\hbar}\n \arrow[d, hookrightarrow, "i_-"]\arrow[r, hookrightarrow,"\iota_{\bi,\phi}"] &\mu^{-1}_{\vi,\w+\vi}(\bi^\perp)^{iso}\arrow[d, hookrightarrow]\arrow[r] & \mu^{-1}_{\vi,\w+\vi}(\bi^\perp)^{iso}/G_{\vi}'\arrow[d, hookrightarrow]\arrow[r, hookrightarrow] & \left[\mu^{-1}_{\vi,\w+\vi}(\bi^\perp)^{S-ss}/G_{\vi}'\right]\arrow[d, hookrightarrow, "\tilde j_-"]\\
   T^*\Rep(\vi,\w)\oplus {\hbar}\s^\perp \arrow[r, hookrightarrow, "\iota_{\text{ab},\phi}"] &\mu^{-1}_{\vi,\w+\vi}(\s^\perp)^{iso} \arrow[r]& \mu^{-1}_{\vi,\w+\vi}(\s^\perp)^{iso}/G_{\vi}'\arrow[r, hookrightarrow] & \left[\mu^{-1}_{\vi,\w+\vi}(\s^\perp)^{S-ss}/G_{\vi}'\right]
   \end{tikzcd}
\end{equation}
gives the following commutative diagram when passing to $S\times T_\w$-equivariant cohomology
\[
\begin{tikzcd}
   H_{T_\w\times G_{\vi'}}(\naka_S(\vi,\w+\vi))\arrow[r, equal] &H_{T_\w}(\left[\naka_S(\vi,\w+\vi)/G_{\vi}'\right]) \arrow[r, "\bm{\iota}_{\text{ab}}^*"]&  \CoHA_{\text{ab},\tau}(\vi,\w)\\
   H_{T_\w\times G_{\vi}'}(\left[\mu^{-1}_{\vi,\w+\vi}(\bi^\perp)^{S-ss}/S\right])\arrow[r, equal]\arrow[u, "(\tilde{j}_-)_*"]& H_{T_\w}(\left[\mu^{-1}_{\vi,\w+\vi}(\bi^\perp)^{S-ss}/S\times G_{\vi}'\right])\arrow[r, "\bm{\iota}_\bi^*"]\arrow[u, "(\tilde{j}_-)_*"]& H_{T_\w}(\RG_{\bi})\arrow[u, "(i_-)_*"]
\end{tikzcd}
\]
The tilde in the map 
\[\tilde j_-: \mu^{-1}_{\vi,\w+\vi}(\bi^\perp)^{S-ss}\to \mu^{-1}_{\vi,\w+\vi}(\s^\perp)^{S-ss}
\] is chosen to distinguish it from 
\[j_-: \mu^{-1}_{\vi,\w}(\bi^\perp)^{S-ss}\to \mu^{-1}_{\vi,\w}(\s^\perp)^{S-ss}.
\]
Applying again the first part of Theorem \ref{theorem abelianization stable envelopes}, now with $F_S=\naka_S(\vi,\w)=\naka_S(\vi,\w)\times \naka_S(0,\w)$, we get the commutative square 
\begin{equation*}
    \begin{tikzcd}
      H_{T_{\w}}(\naka_S(\vi,\w))\arrow[r, "\text{Stab}_{\lbrace a<0\rbrace}"] & H_{T_\w\times G_{\vi'}}(\naka_S(\vi,\w+\vi)\\
      H_{T_{\w}}(\naka_{\bi})\arrow[r]\arrow[u, "(j_-)_*"] & H_{T_\w\times G_{\vi}'}(\left[\mu^{-1}_{\vi,\w+\vi}(\bi^\perp)^{S-ss}/S\right])\arrow[u, "(\tilde j_-)_*"]
    \end{tikzcd}
\end{equation*}
that, glued with the previous diagram along $(\tilde{j}_-)_*$, completes the construction of the dashed arrow in diagram \eqref{sought after dashed arrow}. 
The construction of the other edge of the cube is completely analogous and consists of performing the construction above twice, once for each side of the tensor products. Overall, we have shown that all the sides of the cube commute, except from the outermost face (i.e. the one where both the dashed arrows lie). However, commutativity of this last face follows from the following lemma together with the observation that the map $(i_-)_*$ is injective because it is given by multiplication by the Euler class of the normal bundle to the inclusion $i_-$ and its source is an integral domain.
\end{proof}
\begin{lma}
Consider the following diagram of modules:
\[
\begin{tikzcd}
  & \widehat{A}\arrow[dd, "\hat f" {yshift=-10pt}] \arrow[rr, "\hat g" {xshift=-10pt}] & & \widehat{B} \arrow [dd, "\hat h"]\\
  A\arrow[dd, "f"]\arrow[ur, "\alpha"]\arrow[rr, "g" {xshift=-10pt}] & & B\arrow[dd, "h" {yshift=10pt}]\arrow[ur, "\beta"] &\\
  & \widehat{C}\arrow[rr, "\hat k" {xshift=-10pt}] & & \widehat{D} \\
  C \arrow[rr, "k" {xshift=-10pt}]\arrow[ur, "\gamma"] & & D\arrow[ur, "\delta"]
\end{tikzcd}
\]
Assume that all the faces commute except the outermost and that the map $\delta$ is injective. Then also the outermost face commute.
\begin{proof}
By injectivity of $\delta$, it suffices to check that $\delta \circ k\circ f =\delta \circ h\circ g$. Indeed, we have
\[
\delta \circ h\circ g= \hat h \circ \beta \circ g=\hat h\circ \hat g\circ \alpha=\hat k\circ \hat f\circ \alpha =\hat k \circ \gamma \circ f= \delta \circ k\circ f.
\]
\end{proof}
\end{lma}

\subsubsection{}
Applying now the second parts of Theorem \ref{theorem abelianization CoHA} and Theorem \ref{theorem abelianization stable envelopes}, we get the solid diagram
\begin{equation}
\label{last big cube}
\begin{tikzcd}
& H_{T_{\w_1}}(\naka_{\bi_1})\otimes_\Bbbk H_{T_{\w_2}}(\naka_{\bi_2})\arrow[dl]\arrow[dd, shift left=1.5ex, "\text{Stab}_{\lbrace a_1<a_2\rbrace}^{\bi}(\naka_{\bi_1}\times \naka_{\bi_2})" {yshift=-20pt}]\arrow[r, dashrightarrow] & H_{T_{\w_1}}^*(\RG_{\bi_1})\otimes_\Bbbk H_{T_{\w_2}}^*(\RG_{\bi_2})\arrow[dd, "\mathsf{m}_{\text{ab},\tau}^{\bi}"]\arrow[dl] &\\
\Hh(\vi_1,\w_1)\otimes_\Bbbk \Hh(\vi_2,\w_2)\arrow[r, "\bm{\psi}\otimes \bm{\psi}"]\arrow[dd, swap,  "\text{Stab}_{\lbrace a_1<a_2\rbrace}"]& \CoHA(\vi_1,\w_1)\otimes_\Bbbk \CoHA(\vi_2,\w_2)\arrow[dd, swap, shift right=0.7ex, "\mathsf{m}_{\tau}" {yshift=-15pt}] &\\
& H_{T_{\w}}(\naka_{\bi})\arrow[dl, swap, "\pi_*\circ (\Pi^*)^{-1}\circ (j_+)^* "]\arrow[r, dashrightarrow] & H_{T_\w}(\RG_{\bi})\arrow[dl, "q_*\circ (Q^*)^{-1}\circ (i_+)^* "]&\\
\Hh(\vi,\w)\arrow[r, "\bm{\psi}"]& \CoHA(\vi,\w) & 
\end{tikzcd}
\end{equation}
while the dashed arrows are those constructed in the previous section. Commutativity of the outermost face is our final goal. The top left oblique map is $(\pi_{1}\times\pi_{2})_*\circ ((\Pi_1\times \Pi_2)^*)^{-1}\circ (j_{1,+}\times  j_{2,+})^*$ and the top right oblique map is $(q_{1}\times q_{2})_*\circ ((Q_1\times Q_2)^*)^{-1}\circ (i_{1,+}\times  i_{2,+})^*$. By construction, the two lateral sides and the back of the diagram commute. Moreover, the oblique arrows are surjective, so to complete the proof it suffices to show that the two remaining faces, the top and the bottom of the cube, are commutative. Therefore, the following lemma concludes the proof.
\begin{lma}
The top and bottom faces of the cubic diagram above commute. 
\end{lma}
\begin{proof}
As for the previous lemma, the top face is just two copies of the bottom face tensored together, so it suffices to prove the commutativity of the bottom face. 
Recall that the dashed arrow in the bottom face is built from diagram \eqref{fiber squares step two proof}, on which we now build up the following commutative diagram
\[
\begin{tikzcd}
T^*\Rep(\vi,\w)\oplus {\hbar}\n \arrow[r, "\iota_{\bi,\phi}"] &\mu^{-1}_{\vi,\w+\vi}(\bi^\perp)^{iso}\arrow[r] & \mu^{-1}_{\vi,\w+\vi}(\bi^\perp)^{iso}/G_{\vi}' \arrow[r] & \left[\mu^{-1}_{\vi,\w+\vi}(\bi^\perp)^{S-ss}/ G_{\vi}'\right]\\
    T^*\Rep(\vi,\w)\arrow[r, "\iota_{\phi}"]\arrow[u, "i_+"] &\mu^{-1}_{\vi,\w+\vi}(0)^{iso}\arrow[r]\arrow[u]& \mu^{-1}_{\vi,\w+\vi}(0)^{iso}/ G_{\vi}'\arrow[r]\arrow[u]& \left[\mu^{-1}_{\vi,\w+\vi}(0)^{G_{\vi}-ss}/ G_{\vi}'\right] \arrow[u, "\tilde j_+"]
\end{tikzcd}
\]
Passing to $S\times T_\w$-equivariant cohomology, we get the commutative diagram
\[
\begin{tikzcd}
H_{T_\w\times G_{\vi}'}(\mu^{-1}_{\vi,\w+\vi}(\bi^\perp)^{S-ss}/S)\arrow[r, equal]\arrow[d, "\tilde j_+^*"]&H_{T_\w}(\left[\mu^{-1}_{\vi,\w+\vi}(\bi^\perp)^{S-ss}/ S\times G_{\vi}'\right])\arrow[r, "\iota_\bi^*"] &H_{T_\w}(\left[T^*\Rep(\vi,\w)\oplus {\hbar}\n/ S\right])\arrow[d, "i_+^*"]  \\
H_{T_\w\times G_{\vi}'}(\mu^{-1}_{\vi,\w+\vi}(0)^{G_{\vi}-ss}/S)\arrow[r, equal]& H_{T_\w}(\left[\mu^{-1}_{\vi,\w+\vi}(0)^{G_{\vi}-ss}/ S\times G_{\vi}'\right])\arrow[r] & H_{T_\w}(\left[T^*\Rep(\vi,\w)/ S\right])
\end{tikzcd}
\]
Taking quotients by $B$ and $G_\vi$ we can enlarge the previous commutative diagram as follows
\[
\begin{tikzcd}
H_{T_\w\times G_{\vi}'}(\mu^{-1}_{\vi,\w+\vi}(\bi^\perp)^{S-ss}/S)\arrow[r, equal]\arrow[d, "\tilde j_+^*"]&H_{T_\w}(\left[\mu^{-1}_{\vi,\w+\vi}(\bi^\perp)^{S-ss}/ S\times G_{\vi}'\right])\arrow[r, "\iota_\bi^*"] &H_{T_\w}(\left[T^*\Rep(\vi,\w)\oplus h\bi^\perp/ S\right])\arrow[d, "i_+^*"]  \\
H_{T_\w\times G_{\vi}'}(\mu^{-1}_{\vi,\w+\vi}(0)^{G_{\vi}-ss}/S)\arrow[r, equal]& H_{T_\w}(\left[\mu^{-1}_{\vi,\w+\vi}(0)^{G_{\vi}-ss}/ S\times G_{\vi}'\right])\arrow[r] & H_{T_\w}(\left[T^*\Rep(\vi,\w)/ S\right])\\
H_{T_\w\times G_{\vi}'}(\mu^{-1}_{\vi,\w+\vi}(0)^{G_{\vi}-ss}/B)\arrow[d, "\tilde\pi_*"]\arrow[u, swap, "\tilde \Pi^*"]\arrow[r, equal]&H_{T_\w}(\left[\mu^{-1}_{\vi,\w+\vi}(0)^{G_{\vi}-ss}/ B\times G_{\vi}'\right])\arrow[r] &H_{T_\w}(\left[T^*\Rep(\vi,\w)/ B\right])\arrow[u, swap, "Q^*"]\arrow[d, "q_*"]\\
H_{T_\w\times G_{\vi}'}(\mu^{-1}_{\vi,\w+\vi}(0)^{G_{\vi}-ss}/G_{\vi})\arrow[r, equal]&H_{T_\w}(\left[\mu^{-1}_{\vi,\w+\vi}(0)^{G_{\vi}-ss}/ G_{\vi}\times G_{\vi}'\right])\arrow[r, "\iota^*"] &H_{T_\w}(\left[T^*\Rep(\vi,\w)/ G_{\vi}\right])
\end{tikzcd}
\]
Notice that the right side of the square consists of the maps appearing in the lower right side of diagram \eqref{last big cube}. Similarly, the maps on the left side of the square are the ones appearing in the diagram 
\[
\begin{tikzcd}
H_{T_\w\times G_\vi'}(\naka(\vi,\w))\arrow[d, "\text{Stab}_{\lbrace a<0\rbrace}"]  & & & H_{T_\w\times G_\vi'}(\naka_\bi)\arrow[d, "\text{Stab}_{\lbrace a<0\rbrace}^\bi"]\arrow[lll, swap,  "\pi_*\circ (\Pi^*)^{-1}\circ j_+^* "] \\
H_{T_\w\times G_\vi'}(\naka(\vi,\w+\vi)) & & &  H_{T_\w\times G_{\vi}'}(\mu_{\vi,\w+\vi}^{-1}(\bi^\perp)^{S-ss}/S) \arrow[lll, swap,  "\tilde\pi_*\circ (\tilde\Pi^*)^{-1}\circ \tilde j_+^*"]
\end{tikzcd}
\]
As before, this diagram is the second abelianization diagram for the fixed component $\naka(\vi,\w)=\naka(\vi,\w)\times \naka(0,\vi)\subset \naka(\vi,\w+\vi)$, see Theorem \ref{theorem abelianization stable envelopes}. 
Combining the last diagram with the outer frame of the penultimate one, we get the following commutative diagram
\[
\begin{tikzcd}
H_{T_\w}(\naka_\bi)\arrow[d, , swap, "\pi_*\circ (\Pi^*)^{-1}\circ j_+^* "]\arrow[rrr, bend left, dashrightarrow] \arrow[r]& H_{T_\w\times G_{\vi}'}(\naka_\bi)\arrow[d, swap, "\pi_*\circ (\Pi^*)^{-1}\circ j_+^* "]\arrow[r, "\text{Stab}_{\lbrace a<0\rbrace}^\bi"] &  H_{T_\w\times G_{\vi}'}(\mu^{-1}(\bi^\perp)^{S-ss}/S)\arrow[r, "\iota_\bi^*"]\arrow[d, swap, "\tilde\pi_*\circ (\tilde\Pi^*)^{-1}\circ \tilde j_+^*"] & H_{T_\w}(\RG_\bi)\arrow[d, "q_*\circ (Q^*)^{-1}\circ (i_+)^* "]\\
\Hh(\vi,\w)\arrow[rrr, bend right, "\bm\psi"] \arrow[r] & H_{T_\w\times G_{\vi}'}(\naka(\vi,\w))\arrow[r, swap, "\text{Stab}_{\lbrace a<0 \rbrace}"] &H_{T_\w\times G_\vi'}(\naka(\vi,\w+\vi))\arrow[r, "\iota^*"] & \CoHA(\vi,\w)
\end{tikzcd}
\]
The two unlabelled horizontal maps are simply the canonical change of group maps. The commuting outer frame of this diagram was exactly what we had to prove that commutes. Therefore we are done.
\end{proof}

\subsection{Application: Shuffle product of stable envelopes}

\subsubsection{}
One of the main reasons for Aganagic and Okounkov to define the map
\[
\bm\psi: H_{T_\w}(\naka(\vi,\w))\to H_{T_\w}(\RG(\vi,\w))=\Bbbk[\liea_\w\times \s]^{W_{G_{\vi}}}
\]
was to give a systematic way to produce tautological presentations of stable envelopes
\[
\text{Stab}_{\mathfrak{C}}: H_{T_\w}(\naka(\vi,\w)^A)\to H_{T_\w}(\naka(\vi,\w)).
\]
Indeed, by Lemma \ref{Lemma section up to a factor}, the assignment 
\[
\text{Stab}^{\bm\psi}_{\mathfrak{C}}:=\frac{\bm{\psi}\circ \text{Stab}_{\mathfrak{C}}}{e({\hbar}\g_\vi)}
\]
defines a function 
\[
 \text{Stab}_{\mathfrak{C}}^{\bm\psi}: H_{T_\w}(\naka(\vi,\w)^A)\to H_{T_\w}(\RG(\vi,\w))_{loc}=\Bbbk[\liea_\w\times \s]^{W_{G_{\vi}}}_{loc}
\]
that, restricted to $\naka(\vi,\w)\subset \RG(\vi,\w)$, is an integral class, i.e. defined in the non-localized ring, and it coincides with $\text{Stab}_{\mathfrak{C}}$. 

\subsubsection{}

With this preliminary observation, Theorem \ref{main theorem} can be applied to obtain explicit inductive formulas for the stable envelopes of a Nakajima variety $\naka(\vi,\w)$. 

As in Section \ref{subsection Stable envelopes of Nakajima varieties: combinatorics}, consider a fixed component $F$ for the action of a subtorus
\[
A=\lbrace (a_1,a_2,\dots ,a_k) \mid a_i\in \Ci^\times  \rbrace\hookrightarrow A_\w
\]
associated to a decomposition $\w=\w_1+\w_2\dots+\w_k$ and a chamber 
\[
\mathfrak{C}_{\sigma}=\lbrace a_{\sigma(1)}<a_{\sigma(2)}<\dots  <a_{\sigma(k)}\rbrace \qquad \sigma \in S_k.
\]
A partition $k=k_1+k_2$ gives rise to a decomposition
\[
F=F_1\times F_2
\]
where $F_1$ is a fixed component of a Nakajima variety $\naka(\vi_1,\w_1)$ with respect of the action of the torus
\[
A_1=\lbrace ( a_{\sigma(1)},a_{\sigma(2)},\dots ,a_{\sigma(k_1)} ) \mid a_i\in \Ci^\times  \rbrace\hookrightarrow A_{\w_1}
\]
and similarly $F_2$ is a fixed component of another variety $\naka(\vi_2,\w_2)$ with respect of the action of
\[
A_2=\lbrace (a_{\sigma(k_1+1)},a_{\sigma(k_1+2)},\dots ,a_{\sigma(k)})  \mid a_i\in \Ci^\times \rbrace\hookrightarrow A_{\w_2}.
\]
By construction, $A=A_1\times A_2$ and we get the following commutative diagram
\[
\begin{tikzcd}
F_1\times F_2 \arrow[d, hookrightarrow]\arrow[r, equal] & F\arrow[d, hookrightarrow]\\
\naka(\vi_1,\w_1)\times \naka(\vi_2,\w_2)\arrow[r, hookrightarrow] & \naka(\vi,\w)
\end{tikzcd}
\]
where all the maps are $A_\w$ equivariant. As usual, the product $\naka(\vi_1,\w_1)\times \naka(\vi_2,\w_2)$ can be also seen as a fixed component of $\naka(\vi,\w)$, this time with respect to the action of a two dimensional torus inside $A$.
Let us now introduce the chambers
\[
\mathfrak{C}^1_\sigma:=\lbrace a_{\sigma(1)}<a_{\sigma(2)}<\dots  <a_{\sigma(k_1)}\rbrace\qquad \mathfrak{C}^2_\sigma:=\lbrace a_{\sigma(k_1+1)}<a_{\sigma(k_1+2)}<\dots  <a_{\sigma(k)}\rbrace
\]
for the actions of $A_1$ and $A_2$ on $\naka(\vi_1,\w_1)$ and $\naka(\vi_2,\w_2)$ respectively. 
\begin{thm}
\label{theorem shuffle stable envelopes}
With the notation above, we have 
\[
\text{Stab}^{\bm\psi}_{\mathfrak{C}_\sigma}(F)=\Shuffle\left( \frac{e(T^*\Rep(\vi,\w)[-1]) \text{Stab}^{\bm\psi}_{\mathfrak{C}^1_\sigma}(F_1) \text{Stab}^{\bm\psi}_{\mathfrak{C}^2_\sigma}(F _2)}{e(\g_{\vi}[-1]) e({\hbar}\g_{\vi}[-1])} \right).
\]
\end{thm}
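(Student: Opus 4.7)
My plan is to use Lemma \ref{triangle lemma} to reduce the statement to the two-fold case, then invoke Theorem \ref{main theorem} to translate the resulting stable envelope into the twisted CoHA multiplication, and finally expand the latter via the shuffle formula of Theorem \ref{CoHA multiplication formula}; the theorem will then follow from an elementary Euler-class identity.

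First, I would apply Lemma \ref{triangle lemma} to the chamber $\mathfrak{C}_\sigma$ and the two-dimensional face $\mathfrak{C}''\subset \mathfrak{C}_\sigma$ cut out by the equations $a_{\sigma(1)}=\dots=a_{\sigma(k_1)}$ and $a_{\sigma(k_1+1)}=\dots=a_{\sigma(k)}$. The subtorus $A''\subset A$ whose Lie algebra is the span of $\mathfrak{C}''$ separates $W^{(1)}=\bigoplus_{j\leq k_1}W_{\sigma(j)}$ from $W^{(2)}=\bigoplus_{j>k_1}W_{\sigma(j)}$, so by Proposition \ref{propoistion fixed point naka} its fixed locus contains the component $\naka(\vi_1,\w_1)\times\naka(\vi_2,\w_2)$, and the restriction of $\mathfrak{C}_\sigma$ to $\mathrm{Lie}(A'')$ is the chamber $\{s<t\}$ used in Section \ref{Section framed semistable CoHA} to define the multiplication of $\HhQ$. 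The residual torus $A/A''$ factors through the product $A_1\times A_2$ acting separately on the two Nakajima varieties, and by the product compatibility of stable envelopes the map $\text{Stab}_{\mathfrak{C}_\sigma/\mathfrak{C}''}$ sends $F=F_1\times F_2$ to $\text{Stab}_{\mathfrak{C}^1_\sigma}(F_1)\otimes \text{Stab}_{\mathfrak{C}^2_\sigma}(F_2)$. Lemma \ref{triangle lemma} then yields the factorisation
\[
\text{Stab}_{\mathfrak{C}_\sigma}(F)=\text{Stab}_{\mathfrak{C}''}\bigl(\text{Stab}_{\mathfrak{C}^1_\sigma}(F_1)\otimes \text{Stab}_{\mathfrak{C}^2_\sigma}(F_2)\bigr).
\]

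Next, I would apply $\bm\psi$ to both sides. By Theorem \ref{main theorem} the map $\bm\psi$ intertwines $\text{Stab}_{\mathfrak{C}''}$ with the twisted CoHA product $\mathsf{m}_\tau$, and then Theorem \ref{CoHA multiplication formula} expresses $\mathsf{m}_\tau$ as a shuffle. Substituting the defining identity $\bm\psi(\text{Stab}_{\mathfrak{C}^i_\sigma}(F_i))=e(h\g_{\vi_i})\text{Stab}^{\bm\psi}_{\mathfrak{C}^i_\sigma}(F_i)$ for $i=1,2$ and the analogous one on the left-hand side leads to
\[
e(h\g_\vi)\text{Stab}^{\bm\psi}_{\mathfrak{C}_\sigma}(F)=\Shuffle\left(\frac{e(T^*\Rep(\vi,\w)[-1])\,e(h\g_\vi[1])\,e(h\g_{\vi_1})\,e(h\g_{\vi_2})\,\text{Stab}^{\bm\psi}_{\mathfrak{C}^1_\sigma}(F_1)\,\text{Stab}^{\bm\psi}_{\mathfrak{C}^2_\sigma}(F_2)}{e(\g_\vi[-1])}\right).
\]
Since $e(h\g_\vi)$ is $W_{G_\vi}$-invariant it can be moved inside the shuffle as a denominator, and the splitting $\g_\vi=\g_{\vi_1}\oplus\g_{\vi_2}\oplus\g_\vi[1]\oplus\g_\vi[-1]$ gives $e(h\g_\vi)=e(h\g_{\vi_1})e(h\g_{\vi_2})e(h\g_\vi[1])e(h\g_\vi[-1])$. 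The three factors $e(h\g_\vi[1])$, $e(h\g_{\vi_1})$, $e(h\g_{\vi_2})$ in the numerator then cancel all but a single $e(h\g_\vi[-1])$ in the denominator, yielding exactly the formula stated in the theorem.

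The main obstacle will be the bookkeeping in the first step: one must check that $\mathfrak{C}''$ is a face of $\mathfrak{C}_\sigma$ in the sense of Lemma \ref{triangle lemma}, that the induced chamber on $A''$ coincides with the one defining the multiplication in $\HhQ$, and that the residual stable envelope decomposes into the expected tensor product. Once this identification is established the remainder is a routine Euler-class computation.
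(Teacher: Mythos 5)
Your proposal is correct and follows essentially the same route as the paper: apply Lemma \ref{triangle lemma} to factor $\text{Stab}_{\mathfrak{C}_\sigma}$ through the two-fold stable envelope, convert the latter into the twisted CoHA product via Theorem \ref{main theorem}, expand by the shuffle formula of Theorem \ref{CoHA multiplication formula}, and simplify using $e(h\g_\vi)=e(h\g_{\vi_1})e(h\g_{\vi_2})e(h\g_\vi[1])e(h\g_\vi[-1])$. The only presentational difference is that you spell out explicitly the choice of two-dimensional face $\mathfrak{C}''$ and the role of the product compatibility of stable envelopes, which the paper leaves implicit in its diagrammatic argument.
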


Notice that the kernel $e(T^*\Rep(\vi,\w)[-1])/(e(\g_{\vi}[-1])e({\hbar}\g_{\vi}[-1])$ of the shuffle formula above coincides with $e(N[-1])$, the Euler class of the negative part of the normal bundle defined in Section \ref{Subsubsection tangent and normal classes}.  
\begin{proof}
Consider the diagram 
\[
\begin{tikzcd}
H_{T_{\w_1}}(F_1)\otimes_\Bbbk H_{T_{\w_2}}(F_2)=H_{T_{\w}}(F)\arrow[rrd, bend left, "\text{Stab}_{\mathfrak{C}_\sigma}(F)"]\arrow[dr, "\text{Stab}_{\mathfrak{C}^1_\sigma}(F_1)\otimes  \text{Stab}_{\mathfrak{C}^2_\sigma}(F _2)"] & & \\
& H_{T_{\w_1}}(\naka(\vi_1,\w_1))\otimes_\Bbbk H_{T_{\w_2}}(\naka(\vi_2,\w_2))\arrow[d, "\bm\psi\otimes \bm\psi"]\arrow[r, "\text{Stab}"] & H_{T_\w}(\naka(\vi,\w))\arrow[d, "\bm\psi"]\\
& H_{T_{\w_1}}(\RG(\vi_1,\w_1))\otimes_\Bbbk H_{T_{\w_2}}(\RG(\vi_2,\w_2)) \arrow[r, "\mathsf{m}_\tau"] & H_{T_\w}(\RG(\vi,\w))
\end{tikzcd}
\]
Here $\text{Stab}$ stands for $\text{Stab}_{\mathfrak{C}_\sigma/\mathfrak{C}^1_\sigma\times \mathfrak{C}^2_\sigma}$.
Commutativity of the central square is exactly the statement of Theorem \ref{main theorem} while commutativity of the upper triangle follows from Lemma \ref{triangle lemma}.
Therefore, applying Theorem \ref{CoHA multiplication formula} we get 
\begin{align*}
    \text{Stab}^{\bm\psi}_{\mathfrak{C}}
    &= \frac{\bm{\psi}\circ \text{Stab}_{\mathfrak{C}}}{e({\hbar}\g_\vi)}\\
    &=\frac{\mathsf{m}_\tau(\bm{\psi}\circ \text{Stab}_{\mathfrak{C}^1_\sigma}(F_1)\otimes  \bm{\psi}\circ \text{Stab}_{\mathfrak{C}^2_\sigma}(F _2))}{e({\hbar}\g_\vi)}\\
    &=\frac{1}{e({\hbar}\g_\vi)}\Shuffle\left( \frac{e(T^*\Rep(\vi,\w)[-1]) e({\hbar}\g_{\vi}[1])(\bm{\psi}\circ \text{Stab}_{\mathfrak{C}^1_\sigma}(F_1)) (\bm{\psi}\circ\text{Stab}_{\mathfrak{C}^2_\sigma}(F _2))}{e(\g_{\vi}[-1]) } \right)\\
    &=\Shuffle\left( \frac{e(T^*\Rep(\vi,\w)[-1])(\bm{\psi}\circ \text{Stab}_{\mathfrak{C}^1_\sigma}(F_1)) (\bm{\psi}\circ\text{Stab}_{\mathfrak{C}^2_\sigma}(F _2))}{e(\g_{\vi}[-1])e({\hbar}\g_{\vi}[-1])e({\hbar}\g_{\vi_1})e({\hbar}\g_{\vi_2}) } \right)\\
    &=\Shuffle\left( \frac{e(T^*\Rep(\vi,\w)[-1]) \text{Stab}^{\bm\psi}_{\mathfrak{C}^1_\sigma}(F_1) \text{Stab}^{\bm\psi}_{\mathfrak{C}^2_\sigma}(F _2)}{e(\g_{\vi}[-1]) e({\hbar}\g_{\vi}[-1])} \right).
\end{align*}
In the penultimate step, we used the fact that $e({\hbar}\g_\vi)$ is symmetric to move it inside the shuffle, and then the factorization $e({\hbar}\g_\vi)=e({\hbar}\g_\vi[1])e({\hbar}\g_\vi[-1])e({\hbar}\g_{\vi_1})e({\hbar}\g_{\vi_2})$ associated to the decomposition $\g_\vi=\g_{\vi_1}\oplus\g_{\vi_2}\oplus\g_\vi[1]\oplus \g_\vi[-1]$ to simplify it with the class $e({\hbar}\g_{\vi}[1])$ in the numerator.
\end{proof}

\newpage 
\printbibliography

\end{document}